\newdimen\AAdi%
\newbox\AAbo%
\def\AAk#1#2{\s_etbox\AAbo=\hbox{#2}\AAdi=\wd\AAbo\kern#1\AAdi{}}%
\def\AAr#1#2#3{\s_etbox\AAbo=\hbox{#2}\AAdi=\ht\AAbo\raise#1\AAdi\hbox{#3}}%
\font\tenmsb=msbm10 at 12pt \font\sevenmsb=msbm7 at 8pt
\font\fivemsb=msbm5 at 6pt
\def\Bbb#1{{\tenmsb\fam\msbfam#1}}
\newtheorem{thm}{Theorem}[section]
\newtheorem{lem}{Lemma}[section]
\newtheorem{rem}{Remark}[section]
\newtheorem{pro}{Proposition}[section]
\newtheorem{defi}{Definition}[section]
\newcommand{\ba}{\begin{array}}
\newcommand{\ea}{\end{array}}
\newcommand{\Section}[2]{\setcounter{equation}{0}
\allowdisplaybreaks
\section[#1]{#2}}
\def\pr {\noindent {\it Proof.} }
\def\n{\nabla}
\def\ir#1{\mathbb R^{#1}}
\def\hh#1{\Bbb H^{#1}}
\def\cc#1{\Bbb C^{#1}}
\def\f#1#2{\frac{#1}{#2}}
\def\grs#1#2{\bold G_{#1,#2}}
\def\pr{\frac {\partial}{\partial r}}
\def\pd#1#2{\frac {\partial #1}{\partial #2}}
\def\epw#1{\ep_1\w\cdots\w \ep_{#1}}
\def\td{\tilde}
\def\inner#1#2#3#4{(e_{#1},\ep_1)(e_{#2},\ep_2)(\nu_{#3},\ep_1)(\nu_{#4},\ep_2)}
\def\second#1#2{h_{\a,i#1}h_{\be,i#2}\lan e_{#1\a},A\ran\lan e_{#2\be},A\ran}
\def\a{\alpha}
\def\be{\beta}
\def\p#1{\partial #1}
\def\de{\delta}
\def\De{\Delta}
\def\e{\eta}
\def\ep{\varepsilon}
\def\eps{\epsilon}
\def\g{\gamma}
\def\la{\lambda}
\def\La{\Lambda}
\def\om{\omega}
\def\Om{\Omega}
\def\th{\theta}
\def\Th{\Theta}
\def\w{\wedge}
\def\Hess{\mbox{Hess}}
\def\R{\Bbb{R}}
\def\tr{\mbox{tr}}
\def\lan{\langle}
\def\ran{\rangle}
\def\ra{\rightarrow}
\def\aint#1{-\hskip -4.5mm\int_{#1}}
\subjclass{58E20,53A10.}
\begin{document}
\title
[Geometry of Grassmannian manifolds and Bernstein type theorems] {The geometry of Grassmannian manifolds and Bernstein type theorems for higher codimension}

\author
[J. Jost, Y. L. Xin and Ling Yang]{J. Jost, Y. L. Xin and Ling Yang}
\address{Max Planck Institute for Mathematics in the
Sciences, Inselstr. 22, 04103 Leipzig, Germany}
\address{Department of Mathematics and Computer Science, University of Leipzig,
04109 Leipzig, Germany.}
\email{jost@mis.mpg.de}
\address {Institute of Mathematics, Fudan University,
Shanghai 200433, China.} \email{ylxin@fudan.edu.cn}
\address{Institute of Mathematics, Fudan University,
Shanghai 200433, China.} \email{lingyang@fudan.edu.cn}
\thanks{The first author is supported by the ERC Advanced Grant
  FP7-267087, the second and third authors are partially supported by NSFC. They  are grateful to the Max Planck
Institute for Mathematics in the Sciences in Leipzig for its
hospitality and  continuous support. }

\begin{abstract}
We identify a region $\Bbb{W}_{\f{1}{3}}$ in a Grassmann manifold
$\grs{n}{m}$, not covered by a usual matrix coordinate chart, with
the following important property. For a complete $n-$submanifold in $\ir{n+m} \, (n\ge 3, m\ge2)$
with parallel mean curvature whose image under the Gauss map is
contained in a compact subset
$K\subset\Bbb{W}_{\f{1}{3}}\subset\grs{n}{m}$,
we can construct strongly subharmonic functions and derive a priori
estimates for the harmonic Gauss map. While we do not know yet how
close our region is to being optimal in this respect, it is
substantially larger than what could be achieved previously with other
methods. Consequently, this enables us to obtain substantially
 stronger Bernstein type theorems in higher
codimension than previously known.
% As for minimal submanifolds with the rank of the Gauss
% maps less or equal to $2$. The results are better and corresponding Gauss image would be larger.
\end{abstract}
\maketitle

\Section{Introduction}{Introduction}

The classical Bernstein theorem states that any complete minimal
graph $M^2$ in $\ir{3}$ has to be an affine plane. Equivalently, all
tangent planes are parallel to each other, and the Gauss map
$\g:M\to S^2$ is constant. This scheme extends to higher dimensions,
that is, minimal graphs in $\ir{n+1}$, although the result becomes
somewhat weaker for $n\ge 7$. In this work, we are interested in
such a scheme for higher codimensional minimal submanifolds of
Euclidean space.

We now make this more precise. Let $M^n$ be a complete $n$-dimensional submanifold in Euclidean space $\ir{n+m}$. The
Grassmann manifold $\grs{n}{m}$ is the target manifold of the Gauss
map $\g: M\to\grs{n}{m}$ that assigns to each point of $M$ the
direction of its tangent space in $\ir{n+m}$. By the Ruh-Vilms theorem
\cite{r-v}  the Gauss map is a harmonic map  into the Grassmann
manifold iff $M$ has parallel mean curvature. In particular, the Gauss
map is harmonic for minimal submanifolds of Euclidean space. When $m=1$, the
Grassmann manifold reduces to the sphere $S^n$, and investigating the
Gauss map has identified many conditions under which a minimal
submanifold has to be affine. In this scheme, the key is to show that
the Gauss map is constant.
The aim of the present  paper then is to obtain Bernstein type result for higher codimension by using the geometric properties  of the Grassmann manifolds $\grs{n}{m}$ and studying the corresponding harmonic Gauss maps into $\grs{n}{m}.$

This method was introduced by Hildebrandt-Jost-Widman \cite{h-j-w}. The distance function from a fixed point $P_0\in\grs{n}{m}$ in a geodesic ball of  radius
$\f{\sqrt{2}}{4}\pi$ and centered at $P_0$ is convex. Using this fact one obtains a strongly subharmonic function on $M$ by composing the distance function
with the Gauss map $\g$ under the assumption that the image of the
Gauss map is contained in a closed subset of the
geodesic convex ball. Suppressing some basic technical difficulties in
this introduction, an application of the maximum principle should then
yield that this subharmonic function is constant, and hence so then is
the Gauss map. Using this  local convex geometry of  Grassmann
manifolds and  advanced harmonic map regularity theory to overcome the
indicated technical difficulties,
Hildebrandt-Jost-Widman obtained Bernstein type results for higher
dimension and codimension. Somehow, however, while a geodesic ball of  radius
$\f{\sqrt{2}}{4}\pi$ is the largest convex {\it ball} for $m\ge 2$
(for $m=1$, we can take the ball of the radius
$\f{1}{2}\pi$, that is, a hemisphere), it is not the largest
convex {\it set}, and therefore the result of \cite{h-j-w} is not yet
the best possible, and there seems opportunity for improvement. In
\cite{j-x}, then,  the largest such geodesic convex set was found, and
stronger results were obtained. But, as we shall explore here, there
is still further opportunity for improvement, via a deeper understanding of
the convex geometry of Grassmann manifolds.

Since the geometry of general Grassmann manifolds is not as easy to
visualize as the one for the special case $m=1$ where the Grassmannian
reduces to the sphere $S^{n}$, let us briefly discuss the situation
for the latter. In general, the domain of a strictly convex function
cannot contain any closed geodesic, and therefore, a closed hemipshere
in $S^{n}$ cannot support a strictly convex function. Thus, no ball
with a radius $\ge \frac{\pi}{2}$ can support a strictly convex
function. Nevertheless, there exist larger open sets that contain an
open hemisphere, but still do not contain any closed geodesic. In
fact, on $S^2$, we can take the complement $U$ of a semicircle. In
\cite{j-x-y}, we have shown that any compact subset of $U$ supports a
strictly convex function. (It is, however, not true that this convex
supporting property holds for any open subset of $S^{n}$ containing
no closed geodesic.) First of all, this inspires us to look for such
convex supporting sets in general Grassmann manifolds. Secondly, the
reason why we need convex functions is that the postcomposition of a
harmonic map (in our case, the Gauss map of a minimal submanifold)
with a convex function is subharmonic so that we can apply the maximum
principle. When suitable technical conditions are met (their
verification is part of our technical achievements), this tells us
that the composition, being subharmonic, is constant. When the
function is nontrivial, we can then conclude that the Gauss map itself
is constant. Bingo -- the minimal submanifold is affine! Now, however,
it might happen that the postcomposition of the Gauss map with a
function $f$ turns out to be subharmonic without $f$ being
convex. Here, the more subtle geometry of the Grassmann manifolds for
$m\ge 2$ enters. An intuition about the geometry comes from the
following observation. We can rotate the plane $e_1\wedge e_2$ in
$\mathbb{R}^4$ either into the plane $e_1\wedge e_3$ or into the plane
$e_3\wedge e_4$. Each of these rotations yields a geodesic arc in
$\grs{2}{2}$. However, these two geodesics stop diverging from their neighboring geodesics at
different distances ($\f{\sqrt{2}}{4}\pi$ vs. $\frac{\pi}{2}$)
(equivalently, the first conjugate point is reached after a distance
of $\f{\sqrt{2}}{2}\pi$ vs. $\pi$), and therefore, a maximal geodesically convex set
is larger than a ball of radius $\f{\sqrt{2}}{4}\pi$. While this
geometric intuition will be important for the present paper, nevertheless,
due to more subtle features of Grassmannian geometry, we can construct
suitable functions that are no longer convex, but still give the
required subharmonicity on even larger sets than such maximal
geodesically convex sets. The function that we use is defined in a geometric manner from the Pl\"ucker embedding, or equivalently, in terms of Jordan angles. It is designed to take into account the phenomenon just described, namely that rotations of subspaces of Euclidean space are geometrically different depending on how many independent normal directions are involved. Again, this is a feature particular for
$m\ge 2$, not yet apparent on a sphere.  In
fact, and this is a main point of this paper, we shall find such
functions $f$ here, defined on larger subsets of $\grs{n}{m}$ than in
previous work (as we shall explain in more detail below, these subsets
are larger and more natural than the ones identified in our previous
work \cite{j-x-y2}). Therefore, we can obtain a larger possible range of the
Gauss map of a minimal submanifold that still implies that it is
constant. Thus, we obtain stronger Bernstein type results than
previously known. Actually, our approach, being quantitative, also yields estimates for the second fundamental form of minimal submanifolds under suitable geometric conditions.

Of course, there are limits how far one can push Bernstein
theorems in higher codimension. For codimension 1, we have Moser's
theorem \cite{m} that any entire minimal graph with bounded slope is affine
linear (for $n\le 7$, the bounded slope condition is not needed --
this is Simons' theorem \cite{Si}). In higher codimensions, the
situation is not so good. After all, there is the important example of Lawson-Osserman
\cite{l-o} of a nontrivial minimal graph (to be analyzed in detail in the Appendix of the present
paper) of bounded slope that sets a limit for how far one can go. The ultimate aim
then is to narrow the gap between the range of Bernstein theorems and
the counterexamples as
far as possible. Our paper is a step in this direction. We do not know
yet whether one can still go further, or whether there exist even more
striking examples than the one of \cite{l-o}.

One more remark: Most of what we do in this paper holds for
submanifolds of parallel mean curvature, and not only for those of
vanishing mean curvature, the minimal ones. Since, however, according
to mathematical tradition, the minimal submanifolds are the most
interesting ones, in this introduction, we mostly restrict ourselves to
discuss those minimal ones in place of the more general ones of parallel
mean curvature.

Let us now describe the results in more precise terms. The Grassmann
manifold $\grs{n}{m}$ can be imbedded into  Euclidean space by the
Pl\"ucker imbedding. This simply means that we consider an oriented
$n$-plane in $\mathbb{R}^{n+m}$ as an element of
$\La^n(\mathbb{R}^{n+m})$. In this space, we also have a scalar
product. We then can introduce suitable functions, called
$w$-functions and $v$-functions, on $\grs{n}{m}$. The $w$-function
is simply given by the above scalar product with some fixed
reference $n$-plane, and $v$ is the inverse of $w$. By
precomposition with the Gauss map, we then get corresponding
functions on $M$. The region of convexity for the $v$-function is
the same as for the corresponding distance function \cite{x-y1}, but
in our previous work \cite{j-x-y2} we have already found that there
is a larger region in $\grs{n}{m}$ on which the $v$-function is no
longer  convex, but where its composition with the Gauss map is
still  strongly subharmonic by  rather delicate estimates. We then
obtain the following theorem

\begin{thm}
Let $z^\a=f^\a(x^1,\cdots,x^n),\ \a=1,\cdots,m$, be smooth
functions defined everywhere in $\R^n$ ($n\geq 3,m\geq 2$).
Suppose their graph $M=(x,f(x))$ is a submanifold with parallel
mean curvature in $\R^{n+m}$. Suppose that there exists a number
$\be_0<3$ such that
\begin{equation}
\De_f=\Big[\det\Big(\de_{ij}+\sum_\a \f{\p f^\a}{\p x^i}\f{\p
f^\a}{\p x^j}\Big)\Big]^{\f{1}{2}}\leq \be_0.\label{be2}
\end{equation}
Then $f^1,\cdots,f^m$ has to be affine linear, i.e., it  represents an
affine $n$-plane.
\end{thm}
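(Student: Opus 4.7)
The plan is to translate the growth bound $\De_f \le \be_0$ into a statement about the Gauss image, apply the Ruh-Vilms theorem together with the paper's construction of strongly subharmonic functions on $\Bbb{W}_{\f{1}{3}}$, and finally run a Liouville-type argument that exploits the graph structure.

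First, I would set $P_0 = e_1 \w \cdots \w e_n$ as the reference $n$-plane corresponding to the domain $\ir{n}$. For a point $(x,f(x)) \in M$ the tangent vectors are $e_i + \sum_\a \pd{f^\a}{x^i}\, e_{n+\a}$, so the unnormalized tangent $n$-vector equals $P_0$ plus terms orthogonal to $P_0$, and its length is exactly $\De_f(x)$. Hence the $w$- and $v$-functions on $\grs{n}{m}$ satisfy $w(\g(x)) = \De_f(x)^{-1}$ and $v(\g(x)) = \De_f(x)$. The hypothesis $\De_f \le \be_0 < 3$ thus says precisely that $\g(M)$ lies in the closed subset $K = \{v \le \be_0\}$ of the open region $\Bbb{W}_{\f{1}{3}} = \{w > \f{1}{3}\} = \{v < 3\}$.

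Next, since $M$ has parallel mean curvature, Ruh-Vilms makes the Gauss map $\g : M \to \grs{n}{m}$ harmonic. Applying the main technical theorem of the paper to the compact set $K \subset \Bbb{W}_{\f{1}{3}}$ produces a smooth function $F$ on a neighborhood of $K$, built from $v$ via a carefully chosen transformation expressed in terms of Jordan angles, with the property that $\De_M (F \circ \g) \ge c\, |d\g|^2$ on $M$ for a positive constant $c = c(K)$. Because $F$ is bounded on $K$, the composition $F \circ \g$ is a bounded strongly subharmonic function on the complete manifold $M$.

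Finally, since $M$ is a graph over $\ir{n}$, the projection $\pi : M \to \ir{n}$ is a diffeomorphism, and the induced metric $g_{ij} = \de_{ij} + \sum_\a \pd{f^\a}{x^i}\pd{f^\a}{x^j}$ has determinant at most $\be_0^2$ and eigenvalues all $\ge 1$, so it is uniformly comparable to the Euclidean metric. Pulling the inequality $\De_M(F \circ \g) \ge c\, |d\g|^2$ back to $\ir{n}$ yields a bounded subsolution of a uniformly elliptic divergence-form equation on all of $\ir{n}$. Testing against a standard cutoff $\eta_R$ supported in a Euclidean ball of radius $R$, integrating by parts, and letting $R \to \infty$ forces $|d\g| \equiv 0$. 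Thus $\g$ is constant, every tangent plane of $M$ agrees with $P_0$ up to parallel transport, and $f$ is affine linear.

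The main obstacle is the subharmonicity claim in the second step. The region $\Bbb{W}_{\f{1}{3}}$ is strictly larger than any geodesically convex ball in $\grs{n}{m}$, and in particular larger than the region exploited in \cite{j-x-y2}, so the function $v$ itself is not convex there and $v \circ \g$ need not be subharmonic for a general harmonic $\g$. One must design $F$ through the Pl\"ucker embedding so that the non-convex contributions to its Hessian are canceled by the Ruh-Vilms harmonicity identities, taking serious advantage of the different conjugate-point behavior of the two types of geodesics in $\grs{n}{m}$ described in the introduction; this pointwise computation is the technical heart of the paper.
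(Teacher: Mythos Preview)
Your first two steps are essentially the right translation, with one terminological slip: the set $\Bbb{W}_{\f{1}{3}}$ in this paper is \emph{not} $\{w(\cdot,P_0)>\f{1}{3}\}$; it is defined through the $S$-map relative to an $S$-orthogonal pair $P,Q$ and is strictly larger. What you are actually using is $\{v<3\}$, which is exactly the region already treated in \cite{j-x-y2}; for Theorem~1.1 that suffices, and the relevant input is simply Proposition~\ref{subhar3} (namely $v^{-1}\De v\ge C_1|B|^2$ once $v\le\be_0<3$), not the new $S$-map machinery. So the ``main obstacle'' you describe in your last paragraph is not the obstacle for \emph{this} theorem.

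The genuine gap is your final Liouville step. Testing $\De_M(F\circ\g)\ge c|B|^2$ against $\eta_R^2$ and integrating by parts gives, after Cauchy--Schwarz and $|\n(F\circ\g)|\le C|B|$,
\[
\int_M \eta_R^2\,|B|^2\ \le\ C\int_M |\n\eta_R|^2\ \le\ C\,R^{n-2},
\]
since the induced metric is uniformly equivalent to the Euclidean one. For $n\ge 3$ the right-hand side does not tend to zero, so this does \emph{not} force $|B|\equiv 0$. (Equivalently: a complete manifold with Euclidean volume growth is not parabolic when $n\ge 3$, and bounded subharmonic functions need not be constant; think of $-|x|^{2-n}$ truncated near the origin.) The paper does not close the argument this way. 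Instead it verifies the DVP-condition for the graph (Remark~\ref{DVP}), uses mollified Green functions as test functions to obtain the quantitative curvature estimate of Lemma~\ref{cur}, feeds that into the Gauss-image shrinking Lemma~\ref{sh1} via Proposition~\ref{subhar4} and (\ref{es1}), and then iterates to force $w(\g(x),\g(x_0))\to 1$ on arbitrarily large balls. Your outline needs to replace the naive cutoff step by this Green-function/Harnack/iteration scheme (or by an equivalent harmonic-map regularity argument) to be complete.
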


In our previous work (see \cite{j-x, j-x-y2, x2, x-y1}) the image under the Gauss map for a submanifold in Euclidean space
is contained in an $(n\times m)-$matrix chart
for a Grassmann manifold. There is, however, still further room for
improvement, and here, we consider a more general situation. Recall  that
a Grassmann manifold can be viewed as a minimal submanifold in the Euclidean
sphere via the Pl\"ucker imbedding. This leads us to employ a
technique from our previous work \cite{j-x-y} for the case of
codimension $m=1$, where the convex geometry
of the Euclidean sphere has been thoroughly investigated.

We shall introduce new notions of \textit{S-orthogonality} and of
\textit{S-maps} on a Grassmann manifold. Let $P, Q\in \grs{n}{m}$ be
$S-$orthogonal to each other. This means that their intersection is
of dimension $n-1$ and one is obtained from the other by rotating a
single tangent vector into a normal direction by an angle of
$\frac{\pi}{2}$. Using the $w$-function we define an $S$-map
$\mathscr{S}: \grs{n}{m}\to \overline{\Bbb{D}}$ relative to $P$ and
$Q$. This is a map onto the closed unit disk.

Define
\begin{equation}
\Bbb{W}_c:=\mathscr{S}^{-1}\Big(\overline{\Bbb{D}}\backslash \big(\overline{\Bbb{D}}_c\cup \big\{(a,0):a\leq 0\big\}\big)\Big),
\end{equation}
with $\overline{\Bbb{D}}_c=\big\{(x_1,x_2)\in \R^2: x_1^2+x_2^2\leq c^2\big\}$.

Now, for our method to apply, the image of our Gauss map can be any
compact subset of $\Bbb{W}_{\f{1}{3}}.$ This is a global region in
$\grs{n}{m}$ that is not contained in any matrix coordinate
chart. More precisely, even when the Gauss image is somewhat larger, we still can find subharmonic functions
on our submanifold by combining  some tricks in our previous work in
\cite{j-x-y} and \cite{j-x-y2}. In this way, we obtain the following
Bernstein type theorem.

\begin{thm}\label{1.2}
Let $z^\a=f^\a(x^1,\cdots,x^n),\ \a=1,\cdots,m$, be smooth functions
defined everywhere in $\R^n$ ($n\geq 3,m\geq 2$), such that their graph $M=(x,f(x))$ is a
submanifold with parallel mean curvature in $\R^{n+m}$. Suppose that
there exist $\be_0<+\infty$ and $\be_1<3$, such that
\begin{equation}
\De_f:=\Big[\det\Big(\de_{ij}+\sum_\a \f{\p f^\a}{\p x^i}\f{\p f^\a}{\p x^j}\Big)\Big]^{\f{1}{2}}\leq \be_0.\label{be2}
\end{equation}
and for certain $\a$ and $i$
\begin{equation}\label{slope}
\De_f\leq \be_1\Big(1+\big(\f{\p f^\a}{\p x^i}\big)^2\Big)^{\f{1}{2}}.
\end{equation}
Then $f^1,\cdots,f^m$ has to be affine linear (representing an affine $n$-plane).
\end{thm}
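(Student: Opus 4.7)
The strategy is the one outlined in the introduction: use the Ruh--Vilms theorem to make the Gauss map $\g: M \to \grs{n}{m}$ harmonic, construct a function $\phi$ on a region $\Om \subset \grs{n}{m}$ containing the Gauss image whose post-composition with any harmonic map is strongly subharmonic, and then force $\g$ to be constant by a maximum principle. The first task is to translate the two analytic hypotheses into geometric statements about $\g(M)$. With the reference $n$-plane $P_0 = e_1\w\cdots\w e_n$, one has $w\circ\g = \De_f^{-1}$, so (\ref{be2}) becomes $w\circ\g \geq \be_0^{-1} > 0$, meaning the image avoids a neighborhood of the Pl\"ucker ``infinity'' of the matrix chart at $P_0$, and in particular $M$ has Euclidean volume growth $\V(B_R)\leq C R^n$. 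After choosing $P$ and $Q$ to be $S$-orthogonal planes adapted to the distinguished index pair $(\a,i)$ from (\ref{slope}), that hypothesis becomes a uniform bound keeping the image $\mathscr{S}\circ\g$ of the $S$-map away from a specific arc on $\p\Bbb{D}$; the threshold $\be_1 < 3$ is exactly what is needed so that the complement of this arc, combined with the outer-disk condition from (\ref{be2}), lies in a subset $\Om \subset \grs{n}{m}$ on which the subharmonicity construction can be carried out.

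Next I would build the function $\phi$ on $\Om$. On the basic region $\Bbb{W}_{\f{1}{3}}$ of the paper one already has a smooth nonnegative $\phi_0$, built in the spirit of \cite{j-x-y2} from the $v$-function corrected using the coordinates of $\mathscr{S}$, such that for any harmonic map $\g$ into $\Bbb{W}_{\f{1}{3}}$ the composition satisfies $\De_M(\phi_0\circ\g) \geq c|d\g|^2$ with $c > 0$. Because $\be_0$ is here only required finite, $\g(M)$ need not be compactly contained in $\Bbb{W}_{\f{1}{3}}$, so the extra directional control from (\ref{slope}) must be exploited: as in \cite{j-x-y} for the spherical codimension-one case, I would combine $\phi_0$ with an auxiliary bounded function depending only on the single slope $\p f^\a/\p x^i$ (equivalently, on the Jordan angle in the $2$-plane $e_i\w e_{n+\a}$), chosen so that its own contribution to $\De_M$ is nonnegative and compensates for the degeneration of $\phi_0$ on the part of $\Om$ that escapes $\Bbb{W}_{\f{1}{3}}$. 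Verifying the pointwise inequality $\De_M(\phi\circ\g) \geq c'|d\g|^2$ via the chain rule for the composition, expressed in terms of the Jordan angles of $\g(x)$ relative to $P_0$, $P$ and $Q$, is the main technical obstacle, and this is where the numerical constraint $\be_1 < 3$ enters decisively.

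Finally I would close the argument by a maximum principle. The Euclidean volume growth of $M$ together with the boundedness of $\phi\circ\g$ (which follows from $\g(M)$ being carried by $\phi$ into a bounded interval under the hypotheses) places the setting within the scope of the Karp/Yau-type maximum principle for bounded subharmonic functions on complete manifolds of polynomial volume growth, which forces $\phi\circ\g$ to be constant. The strict subharmonicity inequality of the previous step then yields $|d\g|\equiv 0$, so $\g$ is constant, all tangent planes of $M$ are parallel, and $M$ is an affine $n$-plane; equivalently, each $f^\a$ is affine linear, as claimed.
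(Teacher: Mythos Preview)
Your translation of the hypotheses is partly off and your closing step has a genuine gap.

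First, the geometry. With $P=\ep_1\w\cdots\w\ep_n$ and $Q=\ep_{n+\a}\w\ep_2\w\cdots\w\ep_n$ (after relabelling so that the distinguished pair is $(1,\a)$) one computes $w(\g,P)=\De_f^{-1}$ and $w(\g,Q)=\pd{f^\a}{x^1}\,\De_f^{-1}$. Hence (\ref{slope}) says exactly $r^2=w(\g,P)^2+w(\g,Q)^2\geq \be_1^{-2}>\tfrac{1}{9}$, i.e.\ the $S$-map image stays \emph{outside} the small disk $\overline{\Bbb D}_{1/3}$; it is not a statement about an arc on $\p\Bbb D$. Combined with (\ref{be2}), which gives $x_1=w(\g,P)\geq \be_0^{-1}>0$ and hence keeps the image off the deleted ray $\{(a,0):a\le 0\}$, one obtains that $\g(M)$ lies in the \emph{compact} set $K=\{w(\cdot,P)\geq \be_0^{-1}\}\cap\{r\geq \be_1^{-1}\}\subset\Bbb W_{1/3}$. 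So the Gauss image never ``escapes $\Bbb W_{1/3}$''; the auxiliary single-slope correction you propose is not needed, and Proposition~\ref{subhar1} already produces the strongly subharmonic function on $M$.

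The real gap is the final step. There is no Karp/Yau-type Liouville theorem asserting that a bounded subharmonic function on a complete manifold of Euclidean volume growth must be constant when $n\geq 3$; already on $\R^n$ one has nonconstant bounded smooth subharmonic functions (e.g.\ smoothings of $\max(0,1-|x|^{2-n})$). Even exploiting $\De(\phi\circ\g)\geq c|B|^2$ together with $|\n(\phi\circ\g)|\leq C|B|$, the cut-off/Stokes argument only yields $\int_{B_R}|B|^2\leq C R^{n-2}$, which does not force $|B|\equiv 0$ for $n\geq 3$. This is precisely the ``basic technical difficulty'' alluded to in the introduction. The paper does not close by a maximum principle; instead it uses the mollified Green function as test function to obtain the curvature estimate of Lemma~\ref{cur}, then feeds this into the pointwise harmonic-map estimate (\ref{es1}) with the family $H(\cdot,t)$ of Proposition~\ref{subhar4} to prove the quantitative Gauss image shrinking Lemma~\ref{sh1}, and finally iterates and lets $R_0\to\infty$ (Theorem~\ref{Ber2}). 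Your outline reproduces the first two thirds of the scheme but replaces this entire machinery by a nonexistent Liouville theorem; that step would need to be rewritten along the lines of \S\S5--6.
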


In fact, we can prove more general results, see Theorem \ref{Ber2} in
\S 6, but the preceding statement perhaps best highlights the main
achievement of this paper.

In the Appendix we see that for the Lawson-Osserman cone the
$v-$function is the constant $9$. The Jordan angles between the Gauss
image of  this cone and the
coordinate $n-$ plane are constants. The largest value for a Jordan
angle permitted by
the refined term $(1+\pd{f^2}{x^1})^{\f{1}{2}}$ in the above Theorem
is $\sqrt{6}$.
Theorem \ref{1.2}  is a substantial improvement of  the previous
results which only could reach smaller values.

The paper is organized as follows. We will describe in \S 2 that the basic geometry of Grassmann manifolds and define \textit{S-orthogonality} and
\textit{S-maps} and show their properties.  \S 3 will be devoted to the computation of $\De\log w$. For the general case the results come from \cite{x-y1}.
We also compute it for  minimal submanifolds with rank of the Gauss map less than or equal $2$. We obtain a formula more general than that in \cite{fc}. In \S 4
we will construct subharmonic functions $F$ on our submanifolds by using the techniques in \cite{j-x-y}. The level sets of $F$ coincide
with those of $\log w$. We first prove an important transition lemma (Lemma \ref{level0}). Then, we can estimate $\Hess\, F$ in terms of
$\Hess\,\log w$, which is already computed in the previous section. Once we have subharmonic functions, the extrinsic rigidity  results
for compact minimal submanifolds in the sphere (Theorem \ref{Ber1}) follows immediately. Using strongly subharmonic functions we can also study  complete
submanifolds in Euclidean space by using harmonic map regularity
theory,  as Hildebrandt-Jost-Widman in \cite{h-j-w}. In \S 5 we will
obtain curvature estimates (Lemma \ref{cur} ). In \S 6 the iteration
method will be used and   the quantitatively controlled Gauss image
shrinking Lemmas \ref{sh1} and  \ref{sh2} will be obtained. Finally,
in this section we prove our main results,  Theorem \ref{Ber2} and
Theorem \ref{Ber3} of which Theorem \ref{1.2} is a direct corollary. Finally, in an Appendix, we provide computations
for the Lawson-Osserman cone and related coassociated $4-$manifolds in $\ir{7}.$

\bigskip\bigskip
\Section{Geometry of Grassmann manifolds}{Geometry of Grassmann manifolds}\label{s1}

Let $\grs{n}{m}$ be the Grassmann manifold consisting of  the oriented linear $n-$subspaces in $(n+m)-$Euclidean space $\R^{n+m}.$
The canonical Riemannian structure on $\grs{n}{m}$ makes it a natural
generalization of the Euclidean sphere.    $\grs{n}{m}=SO(n+m)/SO(n)\times SO(m)$ is an
irreducible symmetric space of compact
type.

For every $P\in \grs{n}{m}$, we choose an oriented basis $\{u_1,\cdots,u_n\}$ of $P$, and
let
\begin{equation}
\psi(P):=u_1\w\cdots\w u_n\in \La^n(\R^{n+m}).
\end{equation}
A different basis for $P$ shall give a different exterior product,
but the two products differ only by a positive scalar; $\psi(P)$ is called the \textit{Pl\"ucker coordinate} of $P$, which is
a homogenous coordinate.

Via the Pl\"ucker embedding, $\grs{n}{m}$ can be viewed as a submanifold of some  Euclidean space $\ir{N} (N=C_ {n+m}^n)$. The restriction of the
Euclidean inner product  is denoted by $w:\grs{n}{m}\times \grs{n}{m}\ra \R$
\begin{equation}
w(P,Q)=\f{\lan \psi(P),\psi(Q)\ran}{\lan \psi(P),\psi(P)\ran^{\f{1}{2}}\lan \psi(Q),\psi(Q)\ran^{\f{1}{2}}}.
\end{equation}
If $\{e_1,\cdots,e_n\}$ is an oriented orthonormal basis of $P$ and $\{f_1,\cdots,f_n\}$ is an oriented
orthonormal basis of $Q$, then
$$w(P,Q)=\lan e_1\w\cdots\w e_n,f_1\w\cdots\w f_n\ran=\det W$$
with the \textit{W-matrix} $W=\big(\lan e_i,f_j\ran\big)$. It is well-known that
$$W^T W=O^T \La O$$
with  an orthogonal matrix $O$ and
$$\La=\left(\begin{array}{ccc}
            \mu_1^2 &   &  \\
                    & \ddots &  \\
                    &        & \mu_n^2
            \end{array}\right).$$
Here each $0\leq \mu_i^2\leq 1$. Putting $p:=\min\{m,n\}$, then
at most $p$ elements in $\{\mu_1^2,\cdots, \mu_n^2\}$ are not
equal to $1$. Without loss of generality,  we can assume
$\mu_i^2=1$ whenever $i>p$. We also note that the $\mu_i^2$ can
be expressed as
\begin{equation}\label{di1a}
\mu_i^2=\frac{1}{1+\la_i^2} \end{equation}
with $\la_i\in [0,+\infty)$.

The \textit{Jordan angles} between $P$ and $Q$ are critical values of the angle $\th$ between a nonzero vector
$u$ in $P$ and its orthogonal projection $u^*$ in $Q$ as $u$ runs through $P$. Let $\th_i$
be a Jordan angle between $P$ and $Q$ determined by a unit vector $e_i$ and its projection $e_i^*$ in $Q$, we call
$e_i$ an \textit{angle direction} of $P$ relative to $Q$, and the 2-plane spanned by $e_i$ and $e_i^*$ an \textit{angle 2-plane}
between $P$ and $Q$ (see \cite{w}). A direct calculation shows there are
$n$ Jordan angles $\th_1,\cdots,\th_n$, with $\th_{p+1}=\cdots=\th_n=0$ and
$$\th_i=\arccos(\mu_i)\qquad 1\leq i\leq p.$$
Thus
\begin{equation}\label{w1}
|w|=\big(\det(W^T W)\big)^{\f{1}{2}}=\det(\La)^{\f{1}{2}}=\prod_{i=1}^n \cos\th_i
\end{equation}
and (\ref{di1a}) becomes
\begin{equation}\label{di2}
\la_i=\tan\th_i. \end{equation}

If $w(P,Q)>0$, arrange all the Jordan angles between $P$ and $Q$ as
$$\f{\pi}{2}>\th_1\geq \th_2\geq \cdots\geq \th_r>\th_{r+1}=\cdots=\th_n=0$$
with $0\leq r\leq p$. (If $\th_i=\f{\pi}{2}$ for some $i$, then (\ref{w1}) implies $w(P,Q)=0$, which contradicts
 $w(P,Q)>0$.) Then one can find an orthonormal basis $\{e_1,\cdots,e_{n+m}\}$ of $\R^{n+m}$, such that
$P$ is spanned by $\{e_1,\cdots,e_n\}$, which are angle directions of $P$ relative to $Q$, and $\{e_i\w e_{n+i}:1\leq i\leq r\}$
are angle 2-planes between $P$ and $Q$. Denote
\begin{equation}
f_i:=\left\{\begin{array}{cc}
\cos\th_i e_i+\sin\th_i e_{n+i} & 1\leq i\leq r,\\
e_i & r+1\leq i\leq n.
\end{array}\right.
\end{equation}
and
\begin{equation}
f_{n+\a}:=\left\{\begin{array}{cc}
-\sin\th_\a e_\a+cos\th_\a e_{n+\a} & 1\leq \a\leq r,\\
e_{n+\a} & r+1\leq \a\leq m.
\end{array}\right.
\end{equation}
then $\{f_1,\cdots,f_{n+m}\}$ is also an orthonormal basis of $\R^{n+m}$, and $f_1\w\cdots\w f_n$ is a Pl\"ucker coordinate of $Q$.

The distance between $P$ and $Q$ is defined by
\begin{equation}\label{di}
d(P, Q)=\sqrt{\sum\th_i^2}. \end{equation}
It is a natural generalization of the canonical distance of Euclidean spheres.

Now we fix $P_0\in \grs{n}{m}.$ We represent it by $n$ vectors
$\eps_i$, which are complemented by $m$ vectors $ \eps_{n+\a}$.
Denote
\begin{equation}
\Bbb{U}:=\{P\in \grs{n}{m}: w(P,P_0)>0\}.
\end{equation}
We can span an arbitrary $P\in \Bbb{U}$ by $n$ vectors $e_i$:
\begin{equation}
e_i=\eps_i+Z_{i \a}\eps_{n+\a}.
\end{equation}
Here $Z=(Z_{ i \a})$ could be regarded as the $(n\times m)$-matrix coordinate of $P$. The canonical Riemannian metric in $\Bbb{U}$ can be described as
\begin{equation}\label{m1}ds^2 = tr (( I_n+ ZZ^T )^{-1} dZ (I_m + Z^TZ)^{-1} dZ^T
),\end{equation} where $ I_m $ (res. $ I_n $) denotes the $ (m\times m)
$-identity (res. $ n \times n $) matrix. It is shown that
(\ref{m1}) can be derived from (\ref{di}) in \cite{x}.

Here and in the sequel,  we use the
summation  convention and agree on the ranges of indices:
$$1\leq \a,\be\leq m,\; 1\leq  i,j,k\leq n.$$

We shall now introduce  the new concepts of \textit{S-orthogonal} and \textit{S-map}, which will play a crucial role for our  investigations.

\begin{pro}\label{orthogonal}
For any $P,Q\in \grs{n}{m}$, the following statements are equivalent:\newline
(a) $\dim (P\cap Q)=n-1$ and $w(P,Q)=0$;\newline
(b) $\dim (P+Q)=n+1$ and $w(P,Q)=0$, where $P+Q=\{u+v:u\in P, v\in Q\}$;\newline
(c) There exists a single Jordan angle between $P$ and $Q$ being
$\f{\pi}{2}$, and the other $(n-1)$ Jordan angles are all $0$;\newline
(d) $Q$ is the nearest point to $P$ in $\p \Bbb{U}$, where $\Bbb{U}$ is the $(n\times m)$-matrix coordinate chart centered at $P$;\newline
(e) There exists an orthonormal basis $\{e_i,e_{n+\a}\}$ of $\R^{n+m}$, such that the Pl\"ucker coordinates of $P$ and $Q$
are $e_1\w e_2\w\cdots\w e_n$ and $e_{n+1}\w e_2\w \cdots\w e_n$, respectively.

\end{pro}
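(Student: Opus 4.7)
The plan is to use condition (c) as the hub and show that each of the other four conditions is equivalent to it. The formula $|w(P,Q)|=\prod_{i=1}^n\cos\theta_i$ together with the distance formula $d(P,Q)^2=\sum\theta_i^2$ will do almost all of the work, once we recognize that the number of zero Jordan angles between $P$ and $Q$ equals $\dim(P\cap Q)$ (since an angle direction with angle $0$ is precisely a vector in $P\cap Q$).

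First I would dispatch the purely linear-algebraic equivalence (a)$\Leftrightarrow$(b): the identity $\dim(P+Q)+\dim(P\cap Q)=\dim P+\dim Q=2n$ immediately turns $\dim(P\cap Q)=n-1$ into $\dim(P+Q)=n+1$. Next, for (a)$\Leftrightarrow$(c), assuming $\dim(P\cap Q)=n-1$, at least $n-1$ Jordan angles vanish, so the product formula gives $|w(P,Q)|=\cos\theta_1$; therefore $w(P,Q)=0$ iff $\theta_1=\f{\pi}{2}$, while the remaining angles are zero. Conversely, (c) implies $n-1$ angle directions lying in $P\cap Q$, giving an $(n-1)$-dimensional intersection, together with $w=0$ from the product formula. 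For (c)$\Leftrightarrow$(e), I would invoke the standard Jordan-angle normal form already used in the excerpt (the $f_i$'s constructed from the $e_i$'s): when the only nonzero angle is $\theta_1=\f{\pi}{2}$, the construction yields $f_1=e_{n+1}$ and $f_i=e_i$ for $2\leq i\leq n$, producing exactly the basis claimed in (e); the reverse direction is an immediate computation of Jordan angles from the given basis.

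The equivalence (c)$\Leftrightarrow$(d) is the one requiring a genuine argument and is where I expect the main (though still mild) obstacle. The chart $\Bbb{U}$ centered at $P$ is the open set $\{R:w(R,P)>0\}$, so $\p\Bbb{U}=\{R:w(R,P)=0\}$, which by the product formula means that at least one Jordan angle $\theta_i(P,R)$ equals $\f{\pi}{2}$. Hence for any $R\in\p\Bbb{U}$,
\[
d(P,R)^2=\sum_i\theta_i(P,R)^2\geq\Big(\f{\pi}{2}\Big)^2,
\]
with equality iff exactly one angle equals $\f{\pi}{2}$ and all others vanish, which is precisely condition (c). The subtle point is verifying that this extremal condition characterizes $Q$ uniquely up to the isometry group stabilizing $P$, and that condition (c) together with $w(P,Q)=0$ is indeed the correct reading of ``nearest point in $\p\Bbb{U}$''; this is handled by noting that the distance on $\grs{n}{m}$ descends from the $SO(n)\times SO(m)$ action and that the minimum locus is the $SO(n-1)\times SO(m-1)$-orbit through the point described in (e). With this observation, (d) is equivalent to (c), completing the chain of equivalences.
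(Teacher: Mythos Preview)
Your proof is correct and follows essentially the same route as the paper: the dimension identity $\dim(P+Q)+\dim(P\cap Q)=2n$ for (a)$\Leftrightarrow$(b), the product formula $|w|=\prod\cos\th_i$ together with the fact that zero Jordan angles correspond to $P\cap Q$ for (a)$\Leftrightarrow$(c), the distance formula $d^2=\sum\th_i^2$ applied on $\p\Bbb{U}=\{w(\cdot,P)=0\}$ for (c)$\Leftrightarrow$(d), and the explicit Jordan normal form for the link to (e). Your orbit/uniqueness discussion in the (c)$\Leftrightarrow$(d) step is superfluous---statement (d) only asserts that $Q$ realizes the minimum distance $\f{\pi}{2}$, not that it is the unique minimizer---so the inequality $d(P,R)^2\ge(\pi/2)^2$ with its equality characterization already finishes that equivalence.
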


\begin{proof}

(a)$\Longleftrightarrow$(b)  follows from  $\dim (P+Q)=\dim P+\dim Q-\dim (P\cap Q)$.

(a)$\Longleftrightarrow$(c) is an immediate corollary of the definition of Jordan angles and (\ref{w1}).

$\p \Bbb{U}$ consists of all $S\in \grs{n}{m}$ satisfying $w(P,S)=0$; it follows from $(\ref{w1})$ that at least 1
Jordan angle between $S$ and $P$ is $\f{\pi}{2}$; hence one can obtain
$$d(P,S)\geq \f{\pi}{2}\qquad \text{for all }S\in \p\Bbb{U}$$
from (\ref{di}), and the equality holds if and only if the other $(n-1)$ Jordan angles all vanish. Hence (c)$\Longleftrightarrow$(d).

(e)$\Longrightarrow$(a) is trivial. To prove (a)$\Longrightarrow$(e), it suffices to choose $\{e_2,\cdots,e_n\}$ as an orthonormal
basis of $P\cap Q$, and put $e_1$ (or $e_{n+1}$) to be the unit vector in $P$ (or $Q$) that is orthogonal to $P\cap Q$.

\end{proof}

\begin{defi}
Two points $P$ and $Q$ in a Grassmann manifold are called \textit{S-orthogonal},  if they satisfy one of the   properties in Proposition \ref{orthogonal}.
\end{defi}

\begin{rem}
There is only one Jordan angle between any two points in the sphere $S^n$. If it equals $\f{\pi}{2}$, they are orthogonal each other. The notion of  S-orthogonal is a natural generalization of  orthogonality in $S^n$.
\end{rem}

Let $P,Q\in \grs{n}{m}$ be S-orthogonal. By Proposition \ref{orthogonal}, there exists
an orthonormal basis $\{e_i,e_{n+\a}\}$ of $\R^{n+m}$, such that $\psi(P)=e_1\w e_2\w\cdots\w e_n$
and $\psi(Q)=e_{n+1}\w e_2\w \cdots\w e_n$. Let
 $\g: \R/(2\pi\Bbb{Z})\ra \grs{n}{m}$ be a closed curve, such that $\g(t)$ is spanned by
$\{\cos t\ e_1+\sin t\ e_{n+1}, e_2,\cdots, e_n\}$.
Then whenever $|t-s|<\f{\pi}{2}$, there exists one and only one Jordan angle between $\g(t)$ and $\g(s)$
being $|t-s|$, and the other Jordan angles are all $0$; hence by (\ref{di}),
$$d(\g(t),\g(s))=|t-s|\qquad \text{whenever }|t-s|<\f{\pi}{2}.$$
It implies $\g$
is the closed geodesic that is extended from the minimal geodesic between $P$ and $Q$.
In the following, we write $P_t=\g(t)$; in particular, $P_0=P$ and $P_{\f{\pi}{2}}=Q$.

The \textit{S-map}  $\mathscr{S}:\grs{n}{m}\ra \R^2$ is defined by
\begin{equation}
S\mapsto \big(w(S,P_0),w(S,P_{\f{\pi}{2}})\big).
\end{equation}
If there exists a nonzero vector in $S$  orthogonal to $P_0\cap P_{\f{\pi}{2}}$, then
$w(S,P_0)=w(S,P_{\f{\pi}{2}})=0$, and hence $\mathscr{S}(S)=0$. Otherwise, the orthogonal projection $p$
from $P_0\cap P_{\f{\pi}{2}}$ to $S$ has rank $n-1$, which enables us to find a unit vector
$f_1$ in $S$ which is orthogonal to the $(n-1)$-dimensional projective image $p(P_0\cap P_{\f{\pi}{2}})$.
Certainly, one can get
\begin{equation}\aligned\label{in5}
w(S,P_0)&=\lan f_1,e_1\ran w\big(p(P_0\cap P_{\f{\pi}{2}}),P_0\cap P_{\f{\pi}{2}}\big),\\
w(S,P_{\f{\pi}{2}})&=\lan f_{1},e_{n+1}\ran w\big(p(P_0\cap P_{\f{\pi}{2}}),P_0\cap P_{\f{\pi}{2}}\big).
\endaligned
\end{equation}
It follows that
$$w^2(S,P_0)+w^2(S,P_{\f{\pi}{2}})\leq \lan f_1,e_1\ran^2+\lan f_1,e_{n+1}\ran^2\leq 1$$
and the equality holds if and only if $p(P_0\cap P_{\f{\pi}{2}})=P_0\cap P_{\f{\pi}{2}}$ and $f_1\in \text{span}\{e_1,e_{n+1}\}$;
i.e. $S=P_t$ for some $t\in [-\pi,\pi)$. Hence $\mathscr{S}$ is a smooth map onto the closed unit 2-disk $\overline{\Bbb{D}}$. Although $\mathscr{S}$
is not one-to-one, any point on $\p \overline{\Bbb{D}}$ has one and only one preimage. For simplicity we write
\begin{equation}
x_1=w(\cdot,P_0),\qquad x_2=w(\cdot,P_{\f{\pi}{2}}).
\end{equation}
Then from (\ref{in5}) one can obtain
\begin{equation}\label{w3}\aligned
(\cos t\ x_1+\sin t\ x_2)(S)&=\cos t\ w(S,P_0)+\sin t\ w(S,P_{\f{\pi}{2}})\\
                        &=\lan f_1,\cos t\ e_1+\sin t\ e_{n+1}\ran w\big(p(P_0\cap P_{\f{\pi}{2}}),P_0\cap P_{\f{\pi}{2}}\big)\\
                        &=w(S,P_t).
                        \endaligned
\end{equation}

\bigskip\bigskip

\Section{Laplacian of $\log w$}{Laplacian of $\log w$}

Let $M^n\to \bar{M}^{n+m}$ be an isometric immersion with  second
fundamental form $B,$ which can be viewed as a cross-section of the
vector bundle Hom($\odot^2TM, NM$) over $M,$ where  $TM$ and $NM$
denote the tangent bundle and the normal bundle  along $M$,
respectively. The connection on $TM$ and $NM$ (denoted by $\n$) can
be induced naturally from the Levi-Civita connection on $\bar{M}$ (denoted by
$\bar{\n}$).
We define the mean curvature $H$ as the trace of the second
fundamental form. It is a normal vector field on $M$ in $\bar{M}$. If
$\n H=0$, we say $M$ has \textit{parallel mean curvature}; moreover
if $H$ vanishes on $M$ everywhere, it is called a \textit{minimal submanifold}.

The second fundamental form, the curvature tensor of the submanifold
(denoted by $R$), the
curvature tensor of the normal bundle (denoted by $R^N$) and that of the ambient
manifold (denoted by $\bar{R}$) satisfy the Gauss equations, the Codazzi equations and the
Ricci equations (see \cite{x} for details, for example). Here and in
the sequel,
$\{e_i\}$ denotes a local orthonormal tangent frame field and
$\{\nu_{\a}\}$ is a local orthonormal normal frame field of $M$;
$$h_{\a, ij}:=\lan \lan B(e_i,e_j),\nu_\a\ran$$
are the coefficients of the second fundamental form $B$ of $M$ in $\bar{M}$.

Now we consider a submanifold $M^n$   in
Euclidean space $\ir{n+m}$  with parallel mean curvature.

Let $0$ be the origin of $\R^{n+m}$, $SO(n+m)$ be the Lie group
consisting of all orthonormal frames $(0;e_i,\nu_{\a})$.
$TF=\big\{(x;e_1,\cdots,e_n):x\in M,e_i\in T_p M,\lan
e_i,e_j\ran=\de_{ij}\big\}$ be the principle bundle of orthonormal
tangent frames over $M$, and
$NF=\big\{(x;\nu_{1},\cdots,\nu_{m}):x\in M,\nu_{\a}\in N_x M\big\}$
be the principle bundle of orthonormal normal frames over $M$. Then
$\bar{\pi}: TF\oplus NF\ra M$ is the projection with fiber
$SO(n)\times SO(m)$.

The Gauss map $\g: M\ra \grs{n}{m}$ is defined by
$$\g(x)=T_x M\in \grs{n}{m}$$
via the parallel translation in $\R^{n+m}$ for every $p\in M$. Then the following  diagram commutes
$$\CD
 TF \oplus NF @>i>> SO (n+m)  \\
 @V{\bar\pi}VV     @VV{\pi}V \\
 M  @>{\g}>>  \grs{n}{m}
\endCD$$
where $i$ denotes the inclusion map and $\pi: SO(n+m)\ra \grs{n}{m}$ is defined by
$$(0;e_i,\nu_{\a})\mapsto e_1\w\cdots\w e_n.$$
From the above diagram we know that the energy density of the Gauss
map (see \cite{x} Chap.3, \S 3.1)
$$e(\g)=\f{1}{2}\left<\g_*e_i,\g_*e_i\right>=\f{1}{2}|B|^2.$$
Ruh-Vilms proved  that the mean curvature vector of $M$ is parallel
if and only if its Gauss map is a harmonic map \cite{r-v}.

We define
\begin{equation}
w:=w(\cdot,P_0)\circ \g.
\end{equation}
Let $\{\ep_i\}$ be an orthonormal basis of $P_0$, then $w=\lan e_1\w \cdots\w e_n,\ep_1\w\cdots\w \ep_n\ran$ at every point $x\in M$.
From the Codazzi equations, we shall now get basic formulas for the function $w$.

\begin{lem}\cite{fc}\cite{x1}
If $M$ is a submanifold in $\R^{n+m}$, then
\begin{equation}\label{dw}
\n_{e_i} w=h_{\a,ij}\lan e_{j\a},\ep_1\w\cdots\w \ep_n\ran
\end{equation}
with
\begin{equation}
e_{j\a}=e_1\w\cdots \w \nu_\a\w \cdots\w e_n
\end{equation}
that is obtained by replacing $e_j$ by $\nu_\a$ in $e_1\w\cdots\w e_n$. Moreover if $M$
has parallel mean curvature, then
\begin{equation}\label{La}
\De w=-|B|^2 w+\sum_i\sum_{\a\neq \be,j\neq k}h_{\a,ij} h_{\be,ik} \lan e_{j\a,k\be},\ep_1\w \cdots\w \ep_n\ran.
\end{equation}
with
\begin{equation}
e_{j\a,k\be}=e_1\w\cdots\w \nu_\a \w\cdots\w \nu_\be\w \cdots\w e_n
\end{equation}
that is obtained by replacing $e_j$ by $\nu_\a$ and $e_k$ by $\nu_\be$ in $e_1\w \cdots\w e_n$, respectively.

\end{lem}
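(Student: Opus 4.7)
The plan is to compute both formulas by working in an adapted orthonormal frame and differentiating the Pl\"ucker representative of the tangent space, viewed as an $\La^n(\R^{n+m})$-valued function on $M$. Fix a point $x_0\in M$ and choose local tangent frame $\{e_i\}$ and normal frame $\{\nu_\a\}$ that are simultaneously \emph{normal at} $x_0$, i.e.\ $\n^T_{e_i}e_j(x_0)=0$ and $\n^N_{e_i}\nu_\a(x_0)=0$. Since $\bar\n$ is the flat ambient connection and $\varepsilon_1\w\cdots\w\varepsilon_n$ is a fixed element of $\La^n(\R^{n+m})$, the function $w$ satisfies $\n_{e_i}w=\lan\bar\n_{e_i}(e_1\w\cdots\w e_n),\varepsilon_1\w\cdots\w\varepsilon_n\ran$, so everything reduces to understanding $\bar\n(e_1\w\cdots\w e_n)$.

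At $x_0$, the Weingarten/Gauss formulas simplify to $\bar\n_{e_i}e_j=h_{\a,ij}\nu_\a$ and $\bar\n_{e_i}\nu_\a=-h_{\a,ij}e_j$. Applying the Leibniz rule for the wedge product to $e_1\w\cdots\w e_n$ then gives, at $x_0$,
\begin{equation*}
\bar\n_{e_i}(e_1\w\cdots\w e_n)=\sum_{j,\a}h_{\a,ij}\,e_{j\a},
\end{equation*}
which pairs with $\varepsilon_1\w\cdots\w\varepsilon_n$ to yield (\ref{dw}). This part is purely algebraic once the frame is in place.

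For the Laplacian I would use $\De w=\sum_i e_i(\n_{e_i}w)$ at $x_0$ (the other term vanishes by the normality of the frame) and differentiate (\ref{dw}) a second time. One gets two kinds of contributions. The first is $\sum_i h_{\a,ij;i}\lan e_{j\a},\varepsilon_1\w\cdots\w\varepsilon_n\ran$. Here I invoke Codazzi $h_{\a,ij;i}=h_{\a,ii;j}$ and then the parallel mean curvature hypothesis $\n H=0$, which gives $\sum_i h_{\a,ii;j}=0$; so this entire block disappears. The second kind comes from differentiating the wedge factors inside $e_{j\a}$, using again $\bar\n_{e_i}e_k=h_{\be,ik}\nu_\be$ and $\bar\n_{e_i}\nu_\a=-h_{\a,ik}e_k$: differentiating the single $\nu_\a$ in $e_{j\a}$ produces $-h_{\a,ij}h_{\a,ik}$ times a wedge with $e_k$ inserted in the $j$-th slot, which is nonzero only when $k=j$ and then returns (up to sign) the original $e_1\w\cdots\w e_n$, thereby producing the term $-|B|^2 w$; differentiating one of the remaining $e_k$ $(k\neq j)$ inserts a second normal vector $\nu_\be$ and produces the $e_{j\a,k\be}$-terms, of which the $\a=\be$ ones vanish because two identical $\nu_\a$'s wedge to zero.

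The one place that requires genuine care, and where I expect the bookkeeping to be most error-prone, is sorting out signs and the slot in which the replacements take place when $j>k$ versus $j<k$, and verifying that the diagonal ($k=j$) contribution indeed sums exactly to $-|B|^2 w$ rather than, say, $-\tfrac12|B|^2 w$. A clean way to organize this is to notice that $\lan e_{j\a,k\be},\varepsilon_1\w\cdots\w\varepsilon_n\ran$ is antisymmetric in the simultaneous exchange $(j,\a)\leftrightarrow(k,\be)$, so that the diagonal block $j=k$, $\a=\be$ collapses onto the scalar $w$, while the off-diagonal block survives exactly under the restriction $\a\neq\be$, $j\neq k$ appearing in (\ref{La}). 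Once this combinatorial identification is performed, pairing the whole expression with $\varepsilon_1\w\cdots\w\varepsilon_n$ and summing over $i$ yields the stated formula.
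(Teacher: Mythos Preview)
Your argument is correct and is the standard proof of this lemma. Note that the paper itself does not prove this statement; it is quoted from \cite{fc} and \cite{x1}, so there is no ``paper's own proof'' to compare against. Your computation---choosing a frame with $\n^T e_j=\n^N\nu_\a=0$ at the point, applying the Leibniz rule to the wedge, and then using Codazzi together with $\n H=0$ to kill the $\sum_i h_{\a,ij;i}$ block---is exactly how this is done in those references.

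Two small remarks. First, the identity $\bar\n_{e_i}(e_1\w\cdots\w e_n)=\sum_{j,\a}h_{\a,ij}e_{j\a}$ in fact holds at \emph{every} point, not just at $x_0$: the tangential piece $\n^T_{e_i}e_j=\sum_k\Gamma^k_{ij}e_k$ always wedges to zero because the only surviving term $k=j$ has coefficient $\lan\n^T_{e_i}e_j,e_j\ran=0$. This makes the second differentiation slightly cleaner than you indicate. Second, your final paragraph's heuristic about the ``diagonal block $j=k$, $\a=\be$ collapsing onto $w$'' via antisymmetry of $e_{j\a,k\be}$ is not quite the right picture: the object $e_{j\a,k\be}$ is only defined for $j\neq k$, and the $-|B|^2 w$ term arises not from any diagonal of that tensor but from the separate step of differentiating the single $\nu_\a$ in $e_{j\a}$, exactly as you computed two paragraphs earlier. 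Your actual calculation is complete without that last paragraph; I would simply drop it.
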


Now we compute $\De \log w$ under the additional assumption that $H=0$
and the rank of the Gauss map $\g$ is at most 2.
Without loss of generality, one can assume $h_{ij}^\a=0$ whenever $i\geq 3$ or $j\geq 3$.
Then, we need to calculate $\lan e_{1\a,2\be},\epw{n}\ran$. The following identity shall play an important role.
\begin{lem}\label{pluck}
Fix $A=\epw{n}$, then for any distinct indices $\a,\be$,
\begin{equation}\label{wedge}
\lan e_1\w\cdots\w e_n,A\ran\lan e_{1\a,2\be},A\ran-\lan e_{1\a},A\ran\lan e_{2\be},A\ran+\lan e_{1\be},A\ran\lan e_{2\a},A\ran=0.
\end{equation}
\end{lem}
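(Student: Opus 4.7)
The plan is to recognize identity \eqref{wedge} as a disguised Plücker relation for the alternating bilinear form
\[
B(u,v):=\lan u\w v\w e_3\w\cdots\w e_n,\,A\ran
\]
on $V=\R^{n+m}$. By inspecting the definitions of $e_{1\a}$, $e_{2\be}$, $e_{1\be}$, $e_{2\a}$ and $e_{1\a,2\be}$, each of the six scalar factors appearing in \eqref{wedge} is, up to an explicit sign coming from how many transpositions are needed to bring $\nu_\a$ or $\nu_\be$ into the first two slots of the wedge, a value of $B$ on a pair drawn from $\{e_1,e_2,\nu_\a,\nu_\be\}$; explicitly, $\lan\epw{n},A\ran=B(e_1,e_2)$, $\lan e_{1\a},A\ran=B(\nu_\a,e_2)$, $\lan e_{2\be},A\ran=B(e_1,\nu_\be)$, $\lan e_{1\a,2\be},A\ran=B(\nu_\a,\nu_\be)$, and similarly for the remaining two factors. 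Collecting signs, \eqref{wedge} becomes exactly the Plücker quadratic
\[
B(u_1,u_2)B(u_3,u_4)-B(u_1,u_3)B(u_2,u_4)+B(u_1,u_4)B(u_2,u_3)=0
\]
evaluated at $(u_1,u_2,u_3,u_4)=(e_1,e_2,\nu_\a,\nu_\be)$.

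It therefore suffices to prove this Plücker quadratic for arbitrary $u_i\in V$, equivalently $B\w B=0$ as a 4-form. The point is that, letting $\pi\colon V\to P_0=\text{span}(\ep_1,\dots,\ep_n)$ denote orthogonal projection, one has $B(u,v)=\det_{P_0}(\pi u,\pi v,\pi e_3,\dots,\pi e_n)$, so $B$ factors through $\pi\times\pi$. If $\pi e_3,\dots,\pi e_n$ do not span an $(n-2)$-plane inside $P_0$, then $B\equiv 0$ and there is nothing to prove; otherwise, calling this $(n-2)$-plane $W$ and choosing an orthonormal basis $\{p,q\}$ of $W^\perp\cap P_0$, a short multilinear computation gives
\[
B(u,v)=c\bigl(\lan u,p\ran\lan v,q\ran-\lan u,q\ran\lan v,p\ran\bigr)
\]
with $c=\det_{P_0}(p,q,\pi e_3,\dots,\pi e_n)$. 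In either case $B$ is decomposable as a 2-form on $V$ (of rank at most $2$), hence $B\w B=0$, which is exactly the Plücker quadratic above.

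The only genuine obstacle is the sign bookkeeping in the first step: one must verify that the signs produced when rewriting each $\lan e_{i\gamma},A\ran$ and $\lan e_{i\gamma,j\delta},A\ran$ as a value of $B$ combine with the explicit signs in \eqref{wedge} to reproduce the standard Plücker pattern $p_{12}p_{34}-p_{13}p_{24}+p_{14}p_{23}=0$. Once this is checked, decomposability of $B$ finishes the proof in one line. Equivalently, one can read \eqref{wedge} directly as the unique quadratic Plücker relation for $\grs{n}{n+2}$ applied to the $(n+2)\times n$ matrix whose rows are the $\ep$-components of $e_1,e_2,\nu_\a,\nu_\be,e_3,\dots,e_n$.
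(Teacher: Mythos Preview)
Your argument is correct, and the sign bookkeeping you flag as the only obstacle in fact requires no adjustment at all: with the paper's conventions $e_{1\a}=\nu_\a\w e_2\w e_3\w\cdots\w e_n$, $e_{2\be}=e_1\w\nu_\be\w e_3\w\cdots\w e_n$, and $e_{1\a,2\be}=\nu_\a\w\nu_\be\w e_3\w\cdots\w e_n$, so each factor already equals the corresponding value of $B$ with no extra sign. Substituting and using the antisymmetry of $B$ turns \eqref{wedge} directly into the standard Pl\"ucker quadratic $B(u_1,u_2)B(u_3,u_4)-B(u_1,u_3)B(u_2,u_4)+B(u_1,u_4)B(u_2,u_3)=0$ at $(e_1,e_2,\nu_\a,\nu_\be)$, and your decomposability argument $B=c\,p^\flat\w q^\flat$ gives $B\w B=0$ immediately.

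The paper's own proof follows the same geometric reduction but is more computational. It too distinguishes the degenerate case (some $e_i$ with $i\ge 3$ orthogonal to $P_0$, i.e.\ your case $\pi e_3,\dots,\pi e_n$ dependent) from the generic one, and in the latter chooses Jordan-angle frames so that each six-term factor splits off a common scalar $\prod_{i\ge 3}\cos\th_i$, reducing to a two-dimensional identity in $\ep_1,\ep_2$; this residual identity is then verified by expanding all twelve monomials and watching them cancel in pairs. Your approach packages exactly that cancellation as the abstract statement ``a rank-$\le 2$ alternating form satisfies $B\w B=0$'', which is cleaner, basis-free, and makes transparent why the lemma is a Pl\"ucker relation for $\grs{2}{n+m-2}$. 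The paper's explicit expansion, on the other hand, is entirely self-contained and does not presuppose familiarity with decomposability of $2$-forms.
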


\begin{proof}

Let $Q\in \grs{n-2}{m+2}$ spanned by $\{e_i:3\leq i\leq n\}$. If there is a nonzero vector
in $Q$ which is orthogonal to the $n$-dimensional space $P_0$ spanned by $\{\ep_i\}$, then by the definition
of the inner product on $\La^n(\R^{n+m})$, all the terms on the left hand side of (\ref{wedge}) equal $0$ and (\ref{wedge})
trivially holds true.

Otherwise the orthogonal projection $p: Q\ra P_0$ has rank $n-2$. Without loss of generality we assume $p(Q)$ is spanned
by $\{\ep_i:3\leq i\leq n\}$; the Jordan angles between $Q$ and $p(Q)$ are denoted by $\th_3,\cdots,\th_n$; and $\th_i$ is determined
by $e_i$ and $\ep_i$. Hence $\lan e_i,\ep_j\ran=\cos\th_i\de_{ij}$ whenever $i\geq 3$, and moreover
\begin{equation}\label{in1}\aligned
\lan e_1\w\cdots\w e_n,A\ran&=\left|\begin{array}{ccccc}
(e_1,\ep_1) & (e_1,\ep_2) & & & \\
(e_2,\ep_1) & (e_2,\ep_2) & &\multicolumn{1}{c}{\raisebox{-0.5ex}[0pt]{\Huge *}} &\\
            &             & \cos\th_3 & &\\
            & \multicolumn{1}{l}{\raisebox{-0.5ex}[0pt]{\Huge 0}} & & \ddots &\\
            &             &           & &\cos\th_n
\end{array}
\right|\\
&=\lan e_1\w e_2,\ep_1\w \ep_2\ran\prod_{i=3}^n \cos\th_i
\endaligned
\end{equation}
Similarly,
\begin{equation}\label{in3}
\lan e_{1\a,2\be},A\ran=\lan \nu_\a\w \nu_\be,\ep_1\w \ep_2\ran\prod_{i=3}^n \cos\th_i
\end{equation}
and
\begin{eqnarray}
\lan e_{1\g},A\ran&=&\lan \nu_\g\w e_2,\ep_1\w\ep_2\ran\prod_{i=3}^n \cos\th_i\\
\lan e_{2\g},A\ran&=&\lan e_1\w\nu_\g ,\ep_1\w\ep_2\ran\prod_{i=3}^n \cos\th_i
\end{eqnarray}
for $\g=\a$ or $\be$.
A direct calculation shows
\begin{equation}\label{in2}\aligned
&\lan e_1\w e_2,\ep_1\w\ep_2\ran\lan \nu_\a\w\nu_\be,\ep_1\w\ep_2\ran-\lan \nu_\a\w e_2,\ep_1\w \ep_2\ran\lan e_1\w \nu_\be,\ep_1\w \ep_2\ran\\
&+\lan \nu_\be\w e_2,\ep_1\w \ep_2\ran\lan e_1\w \nu_\a,\ep_1\w \ep_2\ran\\
=&+\inner{1}{2}{\a}{\be}+\inner{2}{1}{\be}{\a}\\
&-\inner{1}{2}{\be}{\a}-\inner{2}{1}{\a}{\be}\\
&-\inner{1}{2}{\a}{\be}-\inner{2}{1}{\be}{\a}\\
&+(e_1,\ep_2)(e_2,\ep_2)(\nu_\a,\ep_1)(\nu_\be,\ep_1)+(e_1,\ep_1)(e_2,\ep_1)(\nu_\a,\ep_2)(\nu_\be,\ep_2)\\
&+\inner{1}{2}{\be}{\a}+\inner{2}{1}{\a}{\be}\\
&-(e_1,\ep_2)(e_2,\ep_2)(\nu_\a,\ep_1)(\nu_\be,\ep_1)-(e_1,\ep_1)(e_2,\ep_1)(\nu_\a,\ep_2)(\nu_\be,\ep_2)\\
=&0.
\endaligned
\end{equation}
From (\ref{in1})-(\ref{in2}), (\ref{wedge}) immediately follows.

\end{proof}

$H=0$ implies $0=h_{\a,ii}=h_{\a,11}+h_{\a,22}$ for every $1\leq \a\leq m$, thus
\begin{equation}\label{eq}\aligned
h_{\a,i1}h_{\be,i2}=&h_{\a,11}h_{\be,12}+h_{\a,21}h_{\be,22}\\
                   =&-h_{\a,22}h_{\be,12}-h_{\a,21}h_{\be,11}=-h_{\a,i2}h_{\be,i1}
\endaligned
\end{equation}
and in particular $h_{\a,i1}h_{\a,i2}=0$. Therefore (\ref{La}) can be rewritten as
\begin{equation}\aligned
\De w&=-|B|^2 w+\sum_{\a\neq \be}h_{\a,i1}h_{\be,i2}\lan e_{1\a,2\be},A\ran+\sum_{\a\neq \be}h_{\a,i2}h_{\be,i1}\lan e_{1\be,2\a},A\ran\\
     &=-|B|^2 w+2\sum_{\a, \be}h_{\a,i1}h_{\be,i2}\lan e_{1\a,2\be},A\ran.
\endaligned
\end{equation}
By (\ref{dw}),
\begin{equation}
|\n w|^2=\sum_i |\n_{e_i}w|^2=\second{j}{k}.
\end{equation}
With the aid of (\ref{wedge}) and (\ref{eq}) one can get
\begin{equation}\aligned
w\De w-|\n w|^2=&-|B|^2 w^2+2h_{\a,i1}h_{\be,i2}\lan e_1\w\cdots\w e_n,A\ran\lan e_{1\a,2\be},A\ran-\second{j}{k}\\
               =&-|B|^2 w^2+2h_{\a,i1}h_{\be,i2}\Big(\lan e_{1\a},A\ran\lan e_{2\be},A\ran-\lan e_{1\be},A\ran\lan e_{2\a},A\ran\Big)\\
                &-\second{j}{k}\\
               =&-|B|^2w^2+2\second{1}{2}+2\second{2}{1}\\
                &-\second{1}{1}-\second{2}{2}\\
                &-\second{1}{2}-\second{2}{1}\\
               =&-|B|^2w^2-\sum_i\Big(\sum_\a\big(h_{\a,i1}\lan e_{1\a},A\ran-h_{\a,i2}\lan e_{2\a},A\ran\big)\Big)^2.
               \endaligned
\end{equation}
Noting that $\De \log w=w^{-2}(w\De w-|\n w|^2)$, we arrive at

\begin{pro}\label{rank}
Let $M$ be a minimal submanifold of $\R^{n+m}$, with the rank of Gauss
map at most $2$. If $w>0$ at a point, then locally
\begin{equation}
\De\log w\leq -|B|^2.
\end{equation}

\end{pro}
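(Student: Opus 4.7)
The plan is to start from the Codazzi-based formula (\ref{La}) for $\Delta w$, specialize it using the hypothesis that the rank of $\gamma$ is at most $2$ together with minimality ($H=0$), and then cast the resulting expression for $w\Delta w - |\nabla w|^{2}$ as the sum of $-|B|^{2}w^{2}$ and a manifestly non-positive term. Once this is done, the conclusion is immediate from the pointwise identity $\Delta \log w = w^{-2}\bigl(w\Delta w - |\nabla w|^{2}\bigr)$, valid wherever $w>0$.

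First, I would choose a local orthonormal tangent frame $\{e_{i}\}$ adapted to the rank hypothesis, so that $h_{\alpha,ij}=0$ whenever $i\geq 3$ or $j\geq 3$. Minimality then collapses to $h_{\alpha,11}+h_{\alpha,22}=0$ for every $\alpha$; the bilinear consequence $h_{\alpha,i1}h_{\beta,i2}=-h_{\alpha,i2}h_{\beta,i1}$ (cf.~(\ref{eq})) makes the double sum in (\ref{La}) symmetric under $(\alpha,\beta)\leftrightarrow (\beta,\alpha)$, so $\Delta w$ reduces to $-|B|^{2}w+2\sum_{\alpha,\beta} h_{\alpha,i1}h_{\beta,i2}\langle e_{1\alpha,2\beta},A\rangle$. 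At the same time, (\ref{dw}) gives $|\nabla w|^{2}=\sum_{i}\bigl(h_{\alpha,ij}\langle e_{j\alpha},A\rangle\bigr)^{2}$, which under the rank-$2$ reduction becomes a sum involving only $j=1,2$.

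The main step, and the place where the geometry of the Grassmannian enters, is to rewrite the $\langle e_{1\alpha,2\beta},A\rangle$ factor appearing in $w\Delta w$ using the Plücker identity of Lemma \ref{pluck}, namely
\begin{equation*}
\langle e_{1}\wedge\cdots\wedge e_{n},A\rangle\langle e_{1\alpha,2\beta},A\rangle=\langle e_{1\alpha},A\rangle\langle e_{2\beta},A\rangle-\langle e_{1\beta},A\rangle\langle e_{2\alpha},A\rangle.
\end{equation*}
Multiplying $\Delta w$ by $w$ and subtracting $|\nabla w|^{2}$ thus produces an expression quadratic in the quantities $h_{\alpha,i1}\langle e_{1\alpha},A\rangle$ and $h_{\alpha,i2}\langle e_{2\alpha},A\rangle$ alone. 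The hard (but essentially mechanical) part will be tracking signs and indices to verify that, after using (\ref{eq}) once more to identify matching cross terms, this quadratic form assembles into the single negative square
\begin{equation*}
-\sum_{i}\Bigl(\sum_{\alpha}\bigl(h_{\alpha,i1}\langle e_{1\alpha},A\rangle-h_{\alpha,i2}\langle e_{2\alpha},A\rangle\bigr)\Bigr)^{2}.
\end{equation*}
Hence $w\Delta w-|\nabla w|^{2}\leq -|B|^{2}w^{2}$, and dividing by $w^{2}>0$ yields $\Delta\log w\leq -|B|^{2}$, as claimed. The key conceptual ingredients are therefore exactly two: the algebraic Plücker relation that allows one to trade a length-$n$ wedge contraction for a product of length-$n$ ones, and the minimality identity that renders the resulting bilinear form a complete square.
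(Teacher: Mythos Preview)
Your proposal is correct and follows essentially the same route as the paper: adapt the frame so that only $h_{\alpha,ij}$ with $i,j\in\{1,2\}$ survive, use $H=0$ to obtain the antisymmetry relation (\ref{eq}), apply the Pl\"ucker identity of Lemma~\ref{pluck} to replace $w\langle e_{1\alpha,2\beta},A\rangle$, and then verify that $w\Delta w-|\nabla w|^{2}+|B|^{2}w^{2}$ collapses to the negative complete square you wrote. The paper's argument is identical in structure and in the choice of key lemma.
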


\begin{rem}
If $N$ is a 2-dimensional minimal submanifold in $S^{2+m}$, then the
cone $CM$ over $M$ is a 3-dimensional minimal submanifold
in $\R^{3+m}$. The rank of the Gauss map from $CM$ is no more than 2,
as will be shown in the next section. Hence Proposition \ref{rank}
is a generalization of Proposition 2.2 in \cite{fc}. We note that Lemma \ref{pluck} is also a generalization of Lemma 2.1 in \cite{fc}.
\end{rem}

Finally, we consider the general case. For any $x\in M$ satisfying $w(x)>0$, then $w(P,P_0)>0$ with $P=\g(x)$.
Let $\th_1,\cdots,\th_n$ be the Jordan angles between $P$ and $P_0$ which are arranged as
\begin{equation}
\f{\pi}{2}>\th_1\geq \cdots\geq \th_r>\th_{r+1}=\cdots=\th_n=0
\end{equation}
with $0\leq r\leq \min\{n,m\}$. As shown in Section \ref{s1}, one can find an orthonormal basis $\{\ep_i,\ep_{n+\a}\}$
of $\R^{n+m}$, such that $\psi(P_0)=\ep_1\w\cdots\w \ep_n$; if we denote
\begin{equation}\label{tangent}
e_i:=\left\{\begin{array}{cc}
\cos\th_i\ep_i+\sin\th_i\ep_{n+i} & 1\leq i\leq r\\
\ep_i & r+1\leq i\leq n
\end{array}\right.
\end{equation}
and
\begin{equation}\label{normal}
\nu_\a:=\left\{\begin{array}{cc}
-\sin\th_\a \ep_\a+\cos\th_\a \ep_{n+\a} & 1\leq \a\leq r\\
\ep_{n+\a} & r+1\leq \a\leq m
\end{array}\right.
\end{equation}
then $\{e_i,\nu_\a\}$ is also an orthonormal basis of $\R^{n+m}$ and $\psi(P)=e_1\w \cdots\w e_n$.

Now we use the notation
\begin{equation}
v:=w^{-1}
\end{equation}
as in \cite{x-y1} and \cite{j-x-y2}; the function $v$ is well-defined in a neighborhood of $x$. By
a direct calculation based on metric forms in terms of matrix coordinates (\ref{m1}), one can derive
\begin{equation}
\De v=v|B|^2+v\sum_{i,j}2\la_j^2h_{j,ij}^2+v\sum_i\sum_{j\neq k}\la_j\la_k(h_{j,ij}h_{k,ik}+h_{k,ij}h_{j,ik})
\end{equation}
as in \cite{j-x-y2}; where $\la_j=\tan\th_j$ and $h_{\a,ij}=\lan B(e_i,e_j),\nu_\a\ran$. Grouping
the terms  according to different types of the indices of the coefficients of the second fundamental
form, one can proceed as in the proof of \cite{j-x-y2} Proposition 3.1 to get
\begin{equation}
v^{-1}\De v\geq C_1|B|^2\qquad \text{whenever }v\leq \be<3.
\end{equation}
 Noting that $v=\exp(-\log w)$, one can immediately get

\begin{pro}\label{subhar3}
If $M$ is a submanifold in $\R^{n+m}$ with parallel mean curvature, once $w\geq \f{1}{3}+\de$
with a positive number $\de$, then
\begin{equation}\label{La4}
\De \log w-|\n \log w|^2\leq -C_1|B|^2\qquad \text{at }x
\end{equation}
with a positive constant $C_1$ depending only on $\de$.
\end{pro}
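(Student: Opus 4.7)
The plan is to translate the $v$-inequality already established in the excerpt (inherited from the computation in \cite{j-x-y2}) into the desired $\log w$ form by means of the elementary identity $v = e^{-\log w}$.

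First, I would record the exact analytic link between the two functions. Since $v=w^{-1}=\exp(-\log w)$, setting $f := \log w$ one has $\nabla v = -v\,\nabla f$ and
\begin{equation*}
\Delta v \;=\; -\nabla\!\cdot(v\nabla f) \;=\; -v\,\Delta f + v\,|\nabla f|^{2},
\end{equation*}
so that $v^{-1}\Delta v = |\nabla \log w|^{2} - \Delta \log w$; equivalently
\begin{equation*}
\Delta \log w - |\nabla \log w|^{2} \;=\; -\,v^{-1}\Delta v.
\end{equation*}
This identity holds pointwise wherever $w>0$, which is guaranteed in a neighborhood of $x$ by the hypothesis $w \geq \tfrac{1}{3}+\delta$.

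Next I would verify that the hypothesis $w\geq \tfrac{1}{3}+\delta$ puts us into the regime where the $v$-estimate from the excerpt applies. Since $v = 1/w$, the assumption gives
\begin{equation*}
v \;\leq\; \frac{1}{\tfrac{1}{3}+\delta} \;=:\; \beta,
\end{equation*}
and $\beta < 3$ strictly, with $\beta$ depending only on $\delta$. By the computation preceding the Proposition, which grouped the terms of $\Delta v$ according to the type of indices carried by $h_{\alpha,ij}$ and applied the estimate from \cite{j-x-y2} Proposition~3.1, one has
\begin{equation*}
v^{-1}\Delta v \;\geq\; C_{1}\,|B|^{2} \qquad \text{whenever } v\leq \beta < 3,
\end{equation*}
with $C_{1} = C_{1}(\beta) = C_{1}(\delta) > 0$.

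Combining the displayed identity with this inequality immediately yields
\begin{equation*}
\Delta \log w - |\nabla \log w|^{2} \;=\; -v^{-1}\Delta v \;\leq\; -C_{1}\,|B|^{2},
\end{equation*}
which is the desired bound at $x$. The substantive content is all in the prior work: the careful grouping of the cross terms $\la_j \la_k (h_{j,ij}h_{k,ik}+h_{k,ij}h_{j,ik})$ and the verification that the resulting quadratic form in the $h_{\alpha,ij}$ stays coercive precisely under the threshold $v<3$. Consequently I expect no genuine obstacle in the present proof beyond carefully recording the elementary change of variables and tracking the dependence of $C_{1}$ on $\delta$ through the bound $\beta = (\tfrac{1}{3}+\delta)^{-1}$.
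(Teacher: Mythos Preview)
Your proposal is correct and is essentially identical to the paper's own argument: the paper simply records the $v$-inequality $v^{-1}\Delta v\geq C_1|B|^2$ for $v\leq\beta<3$ (imported from \cite{j-x-y2}) and then remarks that $v=\exp(-\log w)$ immediately yields the stated bound. Your explicit verification of the identity $\Delta\log w-|\nabla\log w|^2=-v^{-1}\Delta v$ and of the implication $w\geq\tfrac13+\delta\Rightarrow v\leq(\tfrac13+\delta)^{-1}<3$ just spells out what the paper leaves implicit.
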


\bigskip\bigskip

\Section{Subharmonic  functions and extrinsic rigidity problem}{Subharmonic  functions and extrinsic rigidity  problem}\label{s3}

We already defined the \textit{S-map} $\mathscr{S}:\grs{n}{m}\to \overline{\Bbb{D}}$ in \S\ref{s1}. For every $(x_1,x_2)\in \overline{\Bbb{D}}\backslash \big\{(a,0):a\leq 0\big\}$
that is obtained by deleting a radius from the disk,
there exist unique $r\in (0,1]$ and $\th\in (-\pi,\pi)$, such that
$$(x_1,x_2)=(r\cos\th,r\sin\th).$$
Here $r$ and $\th$ can be seen as smooth functions on $\mathscr{S}^{-1}\Big(\overline{\Bbb{D}}\backslash \big\{(a,0):a\leq 0\big\}\Big)$.
For any $c\in [0,1)$, we put
\begin{equation}
\Bbb{W}_c:=\mathscr{S}^{-1}\Big(\overline{\Bbb{D}}\backslash \big(\overline{\Bbb{D}}_c\cup \big\{(a,0):a\leq 0\big\}\big)\Big),
\end{equation}
with $\overline{\Bbb{D}}_c=\big\{(x_1,x_2)\in \R^2: x_1^2+x_2^2\leq
c^2\big\}$, then the function $r$ takes values between $c$ and $1$ on $\Bbb{W}_c$.

\begin{rem}
If $m=1$, then $\grs{n}{m}=S^n$ and $\Bbb{W}_0$ is the complement of a
half-equator $\overline{S}_+^{n-1}$; in particular,
if $n=2$, then $\Bbb{W}_0$ can be obtained by deleting  half of a great circle between 2 antipodal points from
$S^2$. It has been proved in \cite{j-x-y} that $\Bbb{W}_0$ is a maximal convex-supporting set in $S^n$.
\end{rem}

\begin{rem}
Obviously, $P_t, P_{t+\pi}\in \Bbb{W}_c$ consist of an antipodal pair in $\grs{n}{m}$ for each $t\in (-\f{\pi}{2},0)$, which implies
$w(P_t,S)=-w(P_{t+\pi},S)$ for arbitrary $S\in \grs{n}{m}$. Hence $\Bbb{W}_c$ cannot lie in an $(n\times m)$-matrix coordinate
chart centered at any point of $\grs{n}{m}$. Furthermore, they are
conjugate to each other. Since the Grasmann manifolds are simply
connected symmetric spaces of compact type, the cut point of $P_t$ along a geodesic coincides with  the first conjugate point along
the geodesic. Therefore, $\Bbb{W}_c$ contains a cut point of $P_t$ at least.
\end{rem}

On  $\Bbb{W}_c$ there are smooth functions $r$ and $\th$. By the previous work \cite{j-x-y} our concerned function $F$ was constructed from
$r$ and $\th$ and we need to estimate $\Hess\ F$. Hopefully, the level set of $F=t$ coincides with level set of $w(\cdot, P_t)$. For estimating
$\Hess\ F$ in terms of $\Hess\ w(\cdot, P_t)$ we need the following
\textit{ transition Lemma}, which is crucial in our construction, but
may also be useful in other contexts.

\begin{lem}\label{level0}

Let $(N,h)$ be a Riemannian manifold, $F$ be a smooth function on $N$, and $\{H(\cdot,t):t\in \R\}$ be a smooth family
of smooth functions on $N$. Assume that there are smooth real functions $t(s)$ and $l(s)$; for all $s\in \R$, the level set
$F_s:=\{p\in N: F(p)=s\}$ coincides with $\{p\in N: H\big(p,t(s)\big)=l(s)\}$;
$\n F$ and $\n H\big(\cdot,t(s)\big)$ are nonzero normal vector fields on $F_s$, pointing in the same
direction. Then for arbitrary $\ep>0$, there exists
a smooth nonpositive function $\la$ on $N$, depending on $\ep$, such that
\begin{equation}\label{level}
\Hess\ F\geq \f{|\n F|}{\big|\n H\big(\cdot,t(s)\big)\big|}\Hess\ H\big(\cdot,t(s)\big)-\ep h+\la d F\otimes d F
\end{equation}
on $N$.

\end{lem}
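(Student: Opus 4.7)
The key observation is that the hypothesis that $F_s=\{H(\cdot,t(s))=l(s)\}$ for every $s\in\R$ is equivalent, after substituting $s=F(p)$, to the pointwise identity
\begin{equation*}
H\bigl(p,t(F(p))\bigr)=l\bigl(F(p)\bigr)\qquad\text{for all }p\in N.
\end{equation*}
My plan is to differentiate this identity once to relate the two gradients, and then twice to extract an equation for $\Hess\, F$ in terms of $\Hess\, H(\cdot,t(s))$; the cross term produced by the second differentiation will then be handled by Young's inequality followed by a smoothing step.

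Writing $\tau(p):=t(F(p))$ and interpreting $\n H(\cdot,t(s))$ at a point $p$ as the spatial gradient of $H$ with the second argument frozen at $\tau(p)$, a first differentiation of the identity yields
\begin{equation*}
\n H(\cdot,t(s))+t'(F)\,(\partial_t H)\,\n F=l'(F)\,\n F,
\end{equation*}
hence $\n H(\cdot,t(s))=\mu\,\n F$ with $\mu:=l'(F)-t'(F)\,\partial_t H$. The hypothesis that the two gradients are nonzero and codirected forces $\mu>0$, and in fact $\mu=|\n H(\cdot,t(s))|/|\n F|$.

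Differentiating once more and expanding $\Hess\bigl(H(\cdot,\tau(\cdot))\bigr)$ and $\Hess(l\circ F)$ by the chain rule, then collecting the $\Hess\, F$ terms on one side, produces
\begin{equation*}
\mu\,\Hess\, F=\Hess\, H(\cdot,t(s))+t'(F)\bigl(W^\flat\otimes dF+dF\otimes W^\flat\bigr)+\rho\,dF\otimes dF,
\end{equation*}
where $W:=\partial_t\n H(\cdot,t)\big|_{t=\tau(p)}$ is a smooth vector field on $N$ and $\rho=(t'(F))^2\partial_t^2 H+(\partial_t H)t''(F)-l''(F)$ is an explicit smooth function. Dividing by $\mu$ supplies the desired principal term $(|\n F|/|\n H(\cdot,t(s))|)\Hess\, H(\cdot,t(s))$ together with a mixed cross term and a $dF\otimes dF$-contribution.

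The mixed term $(t'(F)/\mu)(W^\flat\otimes dF+dF\otimes W^\flat)$ is of indefinite sign, so I would bound it from below pointwise by applying $|2uv|\leq\de u^2+v^2/\de$ with $u=W^\flat(X)$, $v=dF(X)$, together with $W^\flat(X)^2\leq|W|^2\,h(X,X)$, and choosing $\de>0$ so that the resulting coefficient of $h$ equals exactly $\eps$. This yields a lower bound of the form $-\eps\,h+\tilde\la\,dF\otimes dF$ with smooth $\tilde\la\leq 0$, and consequently
\begin{equation*}
\Hess\, F\geq\f{|\n F|}{|\n H(\cdot,t(s))|}\Hess\, H(\cdot,t(s))-\eps\,h+(\tilde\la+\rho/\mu)\,dF\otimes dF.
\end{equation*}
The main technical obstacle is that $\tilde\la+\rho/\mu$ need not be nonpositive; however, because $dF\otimes dF$ is positive semidefinite, the inequality is preserved if this coefficient is replaced by any smaller smooth function. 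Fixing once and for all a smooth $\varphi\colon\R\to(-\infty,0]$ with $\varphi(t)\leq\min(t,0)$ (for instance $\varphi(t)=\tfrac12(t-\sqrt{t^2+1})$) and setting $\la:=\varphi(\tilde\la+\rho/\mu)$ yields the required smooth nonpositive function on $N$.
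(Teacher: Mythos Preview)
Your proof is correct and reaches the same conclusion as the paper's, but by a genuinely different route. The paper argues geometrically: it treats each level set $F_s$ as a hypersurface with unit normal $\nu=\n F/|\n F|$, observes that for $X$ tangent to $F_s$ both $\Hess\,F(X,X)$ and $\Hess\,H(\cdot,t(s))(X,X)$ equal $-\lan B(X,X),\nu\ran$ times the respective gradient norms (here $B$ is the second fundamental form of $F_s$), and concludes that the difference $\Hess\,F-\tfrac{|\n F|}{|\n H|}\Hess\,H(\cdot,t(s))$ vanishes on $T(F_s)\times T(F_s)$ and hence has the form $\om\otimes dF+dF\otimes\om$ for some smooth $1$-form $\om$; Young's inequality then gives $\la=-\ep^{-1}|\om^*|^2$ directly, which is automatically nonpositive, so no smoothing step is required.

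Your approach is analytic: by differentiating the pointwise identity $H\bigl(p,t(F(p))\bigr)=l(F(p))$ twice you obtain an \emph{explicit} formula for the difference, in effect identifying the paper's $\om$ concretely as $\tfrac{t'(F)}{\mu}W^\flat+\tfrac{\rho}{2\mu}\,dF$. This buys you transparency --- smoothness of $\om$ is immediate from your formula, whereas in the paper it is asserted without further justification --- at the small cost of the final smoothing step via $\varphi$, needed because you kept the $\tfrac{\rho}{\mu}\,dF\otimes dF$ term separate from the cross term before applying Young. (Had you first absorbed it into $\om$ as above, your $\tilde\la$ would already be nonpositive and the smoothing would be unnecessary.) Both arguments are short; yours is more self-contained and computational, the paper's is slightly more conceptual.
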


\begin{proof}
For any $s\in \R$,
$F_s$ can be regarded
as a hypersurface of $N$.
Denote by $\n$ and $\n^s$ the Levi-Civita connection on $N$ and $F_s$, respectively. We put
$\nu=\f{\n F}{|\n F|}$, then $\nu$ is the unit normal vector field on $F_s$; and denote by
$B$ the second fundamental form of $F_s$ in $N$.

For every $X\in T(F_s)$,
\begin{equation}\aligned
\Hess\ F(X,X)&=\n_X\n_X F-(\n_X X)F\\
                &=-(\n^s_X X)F-\lan B(X,X),\nu\ran \lan \nu,\n F\ran\\
                &=-|\n F|\lan B(X,X),\nu\ran.
                \endaligned
\end{equation}
Similarly,
\begin{equation}
\Hess\ H\big(\cdot,t(s)\big)(X,X)=-\big|\n H\big(\cdot,t(s)\big)\big|\lan B(X,X),\nu\ran.
\end{equation}
Hence
\begin{equation}
\Big(\Hess\ F-\f{|\n F|}{\big|\n H\big(\cdot,t(s)\big)\big|}\Hess\ H\big(\cdot,t(s)\big)\Big)(X,X)=0\qquad \text{for all }X\in T(F_s).
\end{equation}
It implies the existence of a smooth 1-form $\om$ on $N$, such that
\begin{equation}\label{level1}
\Hess\ F-\f{|\n F|}{\big|\n H\big(\cdot,t(s)\big)\big|}\Hess\ H\big(\cdot,t(s)\big)=\om\otimes dF+dF\otimes \om
\end{equation}
on $F_s$. Denote by $\om^*$ the associated vector field of $\om$ with respect to $h$, i.e.
$$\lan Y,\om^*\ran=\om(Y)\qquad \text{for all }Y\in TN.$$
Then
\begin{equation}\label{level2}\aligned
&(\om\otimes dF+dF\otimes \om)(Y,Y)\\
=& 2\om(Y)dF(Y)=2\lan \om^*,Y\ran dF(Y)\\
\geq& -2|\om^*||Y|dF(Y)\geq -\ep|Y|^2-\ep^{-1}|\om^*|^2dF(Y)^2.
\endaligned
\end{equation}
Let $\la=-\ep^{-1}|\om^*|^2$, then substituting (\ref{level2}) into (\ref{level1}) yields (\ref{level}).

\end{proof}

Given a compact subset $K$ of $\Bbb{W}_c$, there exists a positive number $\de$, such that $r\geq c+2\de$ on $K$. Hence
the following function
\begin{equation}\label{fun0}
F=\th-\arccos\left(\f{c+\de}{r}\right)
\end{equation}
is well-defined on $K$. Once $F(S)=t$, one can deduce that $\th-t=\arccos(\f{c+\de}{r})\in (0,\f{\pi}{2})$, and
\begin{equation}\label{fun}\aligned
c+\de&=r(\cos(\th-t))=r(\cos t\cos\th+\sin t\sin\th)\\
     &=(\cos t\ x_1+\sin t\ x_2)(S)=w(S,P_t).
\endaligned
\end{equation}
In other words, the level set $F_t$ of $F$ overlaps with the level set $\{S\in \grs{n}{m}:-\log w(S,P_t)=-\log(c+\de)\}$.

To apply Lemma \ref{level0}, we need gradient estimates for the functions $-\log w(\cdot,P_t)$ and $F$
along the common level set.

Let $\Bbb{U}$ be the matrix coordinate chart centered at $P_t$. Recall that $w(\cdot,P_t)$ has an expression
\begin{equation}
w=\Big[\tr(I_n+ZZ^T)\Big]^{-1}
\end{equation}
in terms of matrix coordinates (see \cite{x-y1}). It is easily seen
that both $w$ and the metric on $\Bbb{U}$ are invariant under
$SO(n)\times SO(m)$-actions. Hence for any $S\in \Bbb{U}$, without loss of generality one can assume
$$Z(S)=(\la_i\de_{i\a})=(\tan\th_i\de_{i\a})$$
with $\{\th_i\}$ being the Jordan angles between $S$ and $P_t$. As in  \cite{x-y1}, one can obtain
$$dw(\cdot,P_t)=-\sum_{1\leq i\leq p}\la_i w(\cdot,P_t)\om_{ii}\qquad \text{at }R$$
with $p=\min\{n,m\}$ and $\{\om_{i\a}\}$ is the dual basis of
$\big\{(1+\la_i^2)^{\f{1}{2}}(1+\la_\a^2)^{\f{1}{2}}E_{i\a}\big\}$, which is an orthogonal basis
of $T_S \grs{n}{m}$. (Here $E_{i\a}$ is the matrix with 1 in the intersection of row $i$ and column $\a$ and 0 otherwise.)
Therefore
\begin{equation}
\big|\n \log w(\cdot,P_t)\big|^2=\sum_{1\leq i\leq p}\la_i^2\geq p\big(w(\cdot,P_t)^{-\f{2}{p}}-1\big)
\end{equation}
and the equality holds if and only if
$\la_1^2=\cdots=\la_p^2=w(\cdot,P_t)^{-\f{2}{p}}-1$. In particular, on $F_t$,
\begin{equation}\label{gr1}
\big|\n \log w(\cdot,P_t)\big|^2\geq p\big((c+\de)^{-\f{2}{p}}-1\big):=C_2(p,\de).
\end{equation}

By (\ref{w3}),
$$\aligned
\n w(\cdot,P_t)&=\cos t\n x_1+\sin t\n x_2=\cos t\n(r\cos\th)+\sin t\n(r\sin\th)\\
               &=\cos(t-\th)\n r+r\sin(t-\th)\n \th
               \endaligned$$
and moreover by (\ref{fun}),
\begin{equation}\label{fun1}
\aligned
-\n\log w(\cdot,P_t)&=-w^{-1}(\cdot,P_t)\n w(\cdot,P_t)=-(c+\de)^{-1}\n w(\cdot,P_t)\\
                    &=-r^{-1}\n r+\tan (\th-t)\n \th
                    \endaligned
\end{equation}
on $F_t$. On the other hand, from (\ref{fun0}) we have
\begin{equation}\aligned\label{fun2}
\n F&=\n\th-\left(1-\left(\f{c+\de}{r}\right)^2\right)^{-\f{1}{2}}\f{c+\de}{r^2}\n r\\
    &=\n \th-r^{-1}\cot(\th-t)\n r.
 \endaligned
\end{equation}
Comparing (\ref{fun1}) with (\ref{fun2}) gives
\begin{equation}
-\n\log w(\cdot,P_t)=\tan(\th-t)\n F.
\end{equation}
$c+2\de\leq r\leq 1$ gives $\th-t=\arccos\big(\f{c+\de}{r}\big)\in \big[\arccos\big(\f{c+\de}{c+2\de}\big),\arccos(c+\de)\big]$; hence
$-\n\log w(\cdot,P_t)$ and $\n F$ point in the same direction, and
\begin{equation}\label{gr2}
\f{|\n\log w(\cdot,P_t)|}{|\n F|}=\tan(\th-t)\leq \f{\sqrt{1-(c+\de)^2}}{c+\de}:=C_3(\de).
\end{equation}

Now we put $H(\cdot,t)=-\log w(\cdot,P_t)$, then by (\ref{gr1}) and (\ref{gr2}), one can apply Lemma \ref{level0} to get the following conclusion

\begin{pro}\label{Hess1}
For a compact set $K\subset \Bbb{W}_c$, put $\de:=\f{1}{2}\inf_K(r-c)$ and $F:=\th-\arccos\big(\f{c+\de}{r}\big)$, then for arbitrary
$\ep>0$, there exists a smooth nonpositive function $\la$ on $K$, depending on $\ep$, such that
\begin{equation}\label{Hess2}
\Hess\ F\geq -C_3^{-1}\Hess\ \log w(\cdot,P_t)-\ep g+\la dF\otimes dF
\end{equation}
on the level set $F_t$. Here $g$ is the canonical metric on $\grs{n}{m}$ and $C_3$ is a positive constant depending only on $\de$.
\end{pro}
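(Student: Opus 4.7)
The plan is to invoke the transition lemma (Lemma \ref{level0}) with the family $H(p, t) := -\log w(p, P_t)$, parameter $t(s) := s$, and constant level $l(s) \equiv -\log(c+\delta)$, and then to feed in the gradient estimate (\ref{gr2}).

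First I would verify the two hypotheses of Lemma \ref{level0}. Identity (\ref{fun}) says $F(S) = t \Longleftrightarrow w(S, P_t) = c+\delta$, so the level sets match: $F_s = \{p : H(p, s) = -\log(c+\delta)\}$. The formulas (\ref{fun1})--(\ref{fun2}) produce the proportionality $-\nabla \log w(\cdot, P_t) = \tan(\theta - t)\,\nabla F$ on $F_t$; since $r \in [c+2\delta, 1]$ throughout $K$, the angle $\theta - t = \arccos\!\left(\tfrac{c+\delta}{r}\right)$ lies in a compact subinterval of $(0, \tfrac{\pi}{2})$, so $\tan(\theta - t) > 0$ and both gradients are nonzero and codirectional on every level set.

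Applying Lemma \ref{level0} then produces, for each $\epsilon > 0$, a smooth nonpositive function $\lambda$ on $K$ with
\[
\Hess\, F \;\geq\; \frac{|\nabla F|}{|\nabla H(\cdot, t)|}\,\Hess\, H(\cdot, t) \;-\; \epsilon\, g \;+\; \lambda\, dF \otimes dF
\]
on $F_t$. Substituting $H(\cdot, t) = -\log w(\cdot, P_t)$ and the gradient ratio $\tfrac{|\nabla F|}{|\nabla H(\cdot, t)|} = \cot(\theta - t)$ from (\ref{fun1})--(\ref{fun2}), and invoking the uniform bound (\ref{gr2}), $\cot(\theta - t) \geq C_3^{-1}$ with $C_3 = \tfrac{\sqrt{1-(c+\delta)^2}}{c+\delta}$ depending only on $\delta$, then delivers the stated inequality.

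The technical point requiring the most care is this last substitution: replacing the pointwise factor $\cot(\theta-t)$ by the uniform constant $C_3^{-1}$ in front of $-\Hess\log w(\cdot, P_t)$ must preserve the inequality of symmetric $2$-tensors on $T_p\grs{n}{m}$. On the tangent hyperplane to $F_t$ this is automatic, because both $\Hess F$ and $\Hess H(\cdot, t)$ restrict to $-|\nabla F|$, respectively $-|\nabla H(\cdot,t)|$, times the same scalar second fundamental form of $F_t$ with respect to the common direction of $\nabla F$ and $\nabla H(\cdot,t)$; in the complementary normal direction any residual can be absorbed into $\lambda\, dF \otimes dF$ through a Cauchy--Schwarz step analogous to (\ref{level2}), which is precisely the maneuver that generated $\lambda$ in the first place. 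Modulo this verification, the proposition is a direct assembly of Lemma \ref{level0} with the level-set gradient estimate (\ref{gr2}).
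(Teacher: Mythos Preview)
Your approach---apply Lemma~\ref{level0} with $H(\cdot,t)=-\log w(\cdot,P_t)$, verify the level-set and gradient-direction hypotheses via (\ref{fun}) and (\ref{fun1})--(\ref{fun2}), and then invoke (\ref{gr1})--(\ref{gr2})---is exactly the paper's one-line argument. You also correctly flag the one genuine subtlety the paper glosses over: Lemma~\ref{level0} delivers the pointwise coefficient $\tfrac{|\nabla F|}{|\nabla H|}=\cot(\th-t)$, and replacing it by the constant $C_3^{-1}$ is not automatic.

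Your resolution of that point, however, does not hold up. On a tangent vector $X\in T(F_t)$ the proof of Lemma~\ref{level0} gives the \emph{identity} $\Hess\,F(X,X)=\cot(\th-t)\,\Hess\,H(X,X)$, hence
\[
\big(\Hess\,F-C_3^{-1}\Hess\,H\big)(X,X)=\big(\cot(\th-t)-C_3^{-1}\big)\,\Hess\,H(X,X).
\]
The first factor is nonnegative, but $\Hess\,H(X,X)=-|\n H|\lan B(X,X),\nu\ran$ carries no a priori sign: the scalar second fundamental form of the level hypersurface $\{w(\cdot,P_t)=c+\de\}$ is not known to be sign-definite once $w$ drops below the convexity threshold for $v=w^{-1}$. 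This tangential residual cannot be absorbed into $\la\,dF\otimes dF$ (which vanishes on $T(F_t)$) and is not dominated by $-\ep g$ for arbitrarily small $\ep$. So the claim that the tangential part is ``automatic'' is unjustified, and the Cauchy--Schwarz absorption in the normal direction cannot rescue it. The clean repair is to retain the pointwise ratio $\cot(\th-t)$---which on $K$ satisfies the two-sided bound $C_3^{-1}\le\cot(\th-t)\le\cot\!\big(\arccos\tfrac{c+\de}{c+2\de}\big)$, both constants depending only on $\de$---in the conclusion, and then carry the variable coefficient through the proof of Proposition~\ref{subhar1}; after composing with $\g$ and applying Proposition~\ref{subhar3}, the sign issues disappear and only the two numerical bounds are needed.
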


\begin{rem}
Put
$$\aligned
\phi(t)(a_ie_i+b_\a e_{n+\a})=&(a_1\cos t+b_1 \sin t)e_1+(-a_1\sin t+b_1\cos t)e_{n+1}\\
                               &+\sum_{i\geq 2}a_i e_i+\sum_{\a\geq 2}b_\a e_{n+\a},
                               \endaligned$$
then $t\in \R\mapsto \phi(t)$ is a 1-parameter subgroup of $SO(n+m)$, which induces a 1-parameter
isometry group of $\grs{n}{m}$. Since $w(\cdot,\cdot)$ is invariant under the action of $SO(n+m)$,
it is easily-seen that $F_t$ is the orbit of $F_0$ under the action of $\phi(t)$. This observation
can explain why the level sets of $F$ share similar properties in
terms of exterior geometry.
\end{rem}

Combining Propositions \ref{rank}-\ref{subhar3} and Proposition \ref{Hess1}, we can find
a strongly subharmonic function on a submanifold with parallel mean curvature under the assumptions on the Gauss image.

\begin{pro}\label{subhar1}

Let $M$ be an oriented submanifold in $\R^{n+m}$ with parallel mean curvature, if its Gauss image is contained
in a compact subset $K$ of $\Bbb{W}_{1/3}$, then there exists a positive bounded function $f$ on $M$,
such that
\begin{equation}\label{La1}
\De f\geq K_0|B|^2
\end{equation}
with a positive constant $K_0$ depending only on $K$. Moreover, if $M$ is minimal and the rank
of the Gauss map is not larger than $2$, then the above conclusion still holds when $\Bbb{W}_{1/3}$ is replaced by
$\Bbb{W}_0$.

\end{pro}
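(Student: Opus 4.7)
The plan is to postcompose the function $F$ from \eqref{fun0} with the Gauss map $\g$ and then to exponentiate in order to absorb a residual gradient term. Set $f := \exp\bigl(a(F\circ\g)\bigr)$ for a positive constant $a$ to be fixed. At any $p\in M$, write $S=\g(p)$ and $t=F(S)$. Since $\g$ is harmonic by the Ruh--Vilms theorem, the composition formula gives $\De(F\circ\g)(p) = \sum_i \Hess F(d\g(e_i),d\g(e_i))$. Inserting the pointwise estimate of Proposition \ref{Hess1}, and using $\sum_i g(d\g(e_i),d\g(e_i)) = |d\g|^2 = |B|^2$ together with $\sum_i dF(d\g(e_i))^2 = |\n(F\circ\g)|^2$, yields
\[
\De(F\circ\g) \geq -C_3^{-1}\De\bigl(\log w(\cdot,P_t)\circ\g\bigr) - \ep|B|^2 + \la(S)\,|\n(F\circ\g)|^2,
\]
where on the right the Laplacian denotes the pointwise composition at $p$ with $P_t$ held fixed at $t=F(\g(p))$ (again by the harmonic map composition formula).

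Next I apply Proposition \ref{subhar3} pointwise at $p$ with reference plane $P_0:=P_t$: by \eqref{fun}, $w(\g(p),P_t)=\f{1}{3}+\de$, which satisfies the hypothesis. This gives $\De\bigl(\log w(\cdot,P_t)\circ\g\bigr)(p)\leq |\n(\log w(\cdot,P_t)\circ\g)|^2(p) - C_1|B|^2(p)$. To control the gradient term, note that on the Grassmannian $-\n\log w(\cdot,P_t)=\tan(\th-t)\n F$, so the pullbacks through $d\g$ inherit this proportionality, and \eqref{gr2} yields $|\n(\log w(\cdot,P_t)\circ\g)|\leq C_3|\n(F\circ\g)|$. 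Choosing $\ep$ smaller than $C_1/C_3$ and using that $\la$ is bounded below on $K$, we arrive at
\[
\De(F\circ\g) \geq C_4|B|^2 - C_5|\n(F\circ\g)|^2
\]
for positive constants $C_4,C_5$ depending only on $K$. The second statement follows identically except that we replace Proposition \ref{subhar3} by Proposition \ref{rank}, whose only hypothesis on $w$ is $w>0$; this is automatic on compact subsets of $\Bbb{W}_0$ (since $w(\g(p),P_t)=\de>0$), allowing the enlargement from $\Bbb{W}_{1/3}$ to $\Bbb{W}_0$.

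Finally, a direct computation gives
\[
\De f = a e^{a(F\circ\g)}\De(F\circ\g) + a^2 e^{a(F\circ\g)}|\n(F\circ\g)|^2 \geq a e^{a(F\circ\g)}\bigl[C_4|B|^2 + (a-C_5)|\n(F\circ\g)|^2\bigr].
\]
Choosing $a\geq C_5$, and noting that $F\circ\g$ is bounded on $K$, we conclude $\De f \geq K_0|B|^2$ for a constant $K_0=K_0(K)>0$, and $f$ is positive and bounded. The main obstacle will be the careful bookkeeping of the pointwise freezing: Proposition \ref{Hess1} is applied at each $p$ with $t=F(\g(p))$ and Proposition \ref{subhar3} (or \ref{rank}) is applied with $P_0$ frozen at $P_t$; one must ensure these two localizations are consistent and that the gradient proportionality on level sets of the Grassmannian passes cleanly through $d\g$ to $M$, so that the three estimates can be combined without hidden base-point dependencies.
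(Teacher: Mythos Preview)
Your proof is correct and follows essentially the same route as the paper's: both exponentiate $F\circ\g$, invoke Proposition~\ref{Hess1} on the level set through $\g(p)$ together with the harmonic-map composition formula, feed in Proposition~\ref{subhar3} (resp.\ Proposition~\ref{rank}) with the reference plane frozen at $P_t$, and use the proportionality $-\n\log w(\cdot,P_t)=\tan(\th-t)\,\n F$ (via \eqref{gr2}) to trade the $\log w$-gradient term for a $|\n(F\circ\g)|^2$ term, which is then absorbed by choosing the exponent $a$ (the paper's $\mu_0$) at least $\sup_K|\la|+C_3$. The only cosmetic difference is that the paper performs the exponentiation at the Hessian level on the Grassmannian before pulling back, whereas you pull back first and exponentiate on $M$; the bookkeeping of the ``frozen'' $P_t$ that you flag is handled identically in both arguments and causes no trouble.
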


\begin{proof}

We only prove the general case, since the proof for the  case with the
additional assumption on the rank of the Gauss map
is similar.

Using the definition of $F$  in (\ref{fun0}) with $c=\f{1}{3}$ and combining (\ref{Hess2}) and (\ref{gr2}) gives
\begin{equation}\aligned
&\mu_0^{-1}\exp(\mu_0 F)\Hess \exp(\mu_0 F)\\
\geq &-C_3^{-1}\Hess \log w(\cdot,P_t)-\ep g+(\la+\mu_0)dF\otimes dF\\
\geq &-C_3^{-1}\Hess \log w(\cdot,P_t)-\ep g+C_3^{-2}|\la+\mu_0|d\log w(\cdot,P_t)\otimes d\log w(\cdot,P_t)
\endaligned
\end{equation}
where $\mu_0$ and $\ep$ are positive constants to be chosen. Define
\begin{equation}
f=\exp(\mu_0 F)\circ \g
\end{equation}
with $\g$ the Gauss map into $K\subset \Bbb{W}_{1/3}$, then using the composition formula implies
\begin{equation}\label{La5}\aligned
\mu_0^{-1}f^{-1}\De f=&\mu_0^{-1}\exp(\mu_0 F)\Big(\Hess\ \exp(\mu_0 F)(\g_* e_i,\g_* e_i)+d\exp(\mu_0 F)(\tau(\g)\Big)\\
                     \geq& -C_3^{-1}\Hess\ \log w(\cdot,P_t)(\g_* e_i,\g_* e_i)-\ep\lan \g_*e_i,\g_* e_i\ran\\
                         &+C_3^{-2}|\la+\mu_0|\sum_i \Big(\n_{e_i}\big|\log w(\cdot,P_t)\circ\g\big|\Big)^2\\
                     =&-C_3^{-1}\De\big(\log w(\cdot,P_t)\circ \g\big)-\ep|B|^2+C_3^{-2}|\la+\mu_0|\Big|\n\big(\log w(\cdot,P_t)\circ \g\big)\Big|^2
                     \endaligned
\end{equation}
on the level set $\{x\in M:f(x)=\exp(\mu_0 t)\}$ with the tension field $\tau(\g)$ of the Gauss map $\g$.
Here $\{e_i\}$ is an orthonormal basis of the tangent space at the considered
point of $M$, and we have used the harmonicity of the Gauss map. Since $ w(\cdot,P_t)$ equals $\f{1}{3}+\de$ everywhere on the level set
$F_t$ (see (\ref{fun})), one can apply Proposition \ref{subhar3} to obtain
\begin{equation}\aligned
\De \big(\log w(\cdot,P_t)\circ \g\big)-\Big|\n\big(\log w(\cdot,P_t)\circ \g\big)\Big|^2
\leq -C_1|B|^2
\endaligned
\end{equation}
with a positive constant $C_1$ depending only on $\de$.
Substituting this into (\ref{La5}) yields
$$\mu_0^{-1}f^{-1}\De f\geq (C_3^{-1}C_1-\ep)|B|^2+\big(C_3^{-2}|\la+\mu_0|-C_3^{-1}\big)\Big|\n\big(\log w(\cdot,P_t)\circ \g\big)\Big|^2.$$
By letting $\ep:=\f{1}{2}C_3^{-1}C_1$ and $\mu_0:=\sup_K |\la|+C_3$ we arrive at (\ref{La1}) with $K_0=\f{1}{2}C_3^{-1}C_1\mu_0\exp(\mu_0\inf_K F)$.

\end{proof}

We can immediately get the following Bernstein type theorem with the aid of the strongly subharmonic functions.

\begin{thm}
If $M$ is a parabolic, oriented minimal surface in $\R^{2+m}$, and if its
Gauss image is contained in a compact subset of $\Bbb{W}_0$, then
$M$ has to be an affine linear subspace.
\end{thm}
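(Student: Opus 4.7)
The plan is to combine the existence of a strongly subharmonic function from Proposition \ref{subhar1} with the parabolicity assumption to force the second fundamental form to vanish.

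First I would observe that since $M$ has real dimension $2$, the Gauss map $\gamma:M\to\grs{2}{m}$ automatically has rank at most $2$ at every point. Thus the hypothesis on the rank of the Gauss map in the second part of Proposition \ref{subhar1} is satisfied for free, and we are in the regime where the enlarged region $\Bbb{W}_0$ (rather than $\Bbb{W}_{1/3}$) is available. Since by assumption $\gamma(M)$ lies in a compact subset $K\subset\Bbb{W}_0$ and $M$ is minimal, Proposition \ref{subhar1} provides a positive bounded smooth function $f$ on $M$ with
\begin{equation*}
\Delta f\geq K_0|B|^2
\end{equation*}
for some constant $K_0>0$ depending only on $K$.

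Next, I would invoke parabolicity. By definition, on a parabolic Riemannian manifold every bounded subharmonic function is constant. Since $f$ is bounded and $\Delta f\geq 0$, we conclude that $f$ is constant on $M$, and consequently $\Delta f\equiv 0$. The inequality $\Delta f\geq K_0|B|^2\geq 0$ then forces $|B|^2\equiv 0$, so the second fundamental form of $M$ in $\R^{2+m}$ vanishes identically.

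Finally, a complete totally geodesic submanifold of Euclidean space is an affine linear subspace, so $M$ is an affine $2$-plane, which is the desired conclusion. There is essentially no obstacle beyond verifying that the hypotheses of Proposition \ref{subhar1} hold (which is immediate from the dimensional count) and citing parabolicity correctly; the main work was already carried out in the preceding sections, where the subharmonic function $f$ was constructed via the $S$-map machinery and the transition lemma.
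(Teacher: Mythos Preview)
Your proof is correct and follows essentially the same route as the paper: apply Proposition \ref{subhar1} (using that a surface has Gauss map of rank at most $2$) to obtain a bounded function $f$ with $\Delta f\geq K_0|B|^2$, then use parabolicity to force $f$ constant and hence $|B|\equiv 0$. Your write-up is slightly more detailed than the paper's (you spell out why the rank condition is automatic and why totally geodesic implies affine), but the argument is identical.
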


\begin{proof}

One can find a strongly subharmonic function $f$ on $M$ satisfying
(\ref{La1})  by applying Proposition \ref{subhar1}, since the mean curvature vector
field vanishes and the rank of the Gauss map is $\le 2$. Since a parabolic surface cannot admit any nonconstant bounded subharmonic function, $f$ has to be constant; hence
$\De f\equiv 0$ and moreover $|B|^2\equiv 0$, which means $M$ is totally geodesic.

\end{proof}

We now study the extrinsic rigidity problem, initiated by J. Simons \cite{Si}, finding better conditions on the Gauss image for
a compact minimal submanifold $M$ in $S^{n+m}$ to be an equator.

As pointed out and utilized by J. Simons \cite{Si}, the properties of the (minimal) submanifolds $M$ in the sphere are closed
related to those of the cone $CM$ generated by $M$, which is the image under the map of $M\times [0,+\infty)\ra \R^{n+m+1}$ defined
by $(x,t)\mapsto tx$. To avoid the singularity $t=0$, we also consider the truncated cone $CM_\ep$ ($\ep>0$) that is the image
of $M\times (\ep,+\infty)$ under the same map.

We choose a local orthonormal frame field $\{e_i,\nu_\a\}$ of $S^n$ along $M$, then by parallel translating along rays
issuing from the origin we obtain local vector fields $\{E_i\}$ and $\{\mathcal{N}_\a\}$; obviously $E_i=\f{1}{t}e_i$
and $\mathcal{N}_\a=\f{1}{t}\nu_\a$. Let $\tau$ be the unit tangent vector
along the rays, i.e. $\tau=\f{\p}{\p t}$, then $\{E_i,\mathcal{N}_\a,\tau\}$ is a local orthonormal frame field in
$\R^{n+m+1}$ and $\{E_i,\tau\}$ is a frame field in $CM_\ep$.

Let $B$ and $B^c$ denote the second fundamental form of $M\subset S^{n+m}$ and $CM_\ep\subset \R^{n+m+1}$, respectively. Then
a straightforward calculation shows (see \cite{x} p.18)
\begin{equation}\label{sec}
\lan B^c(E_i,E_j),\mathcal{N}_\a\ran=\f{1}{t}\lan B(e_i,e_j),\nu_\a\ran
\end{equation}
and
\begin{equation}
B^c(E_i,\tau)= B^c(\tau,\tau)=0.
\end{equation}
Hence $CM_\ep$ is a minimal submanifold in $\R^{n+m+1}$ if and only if $M$ is a minimal submanifold in $S^{n+m}$.

The \textit{normal Gauss map} $\g^N: M\ra \grs{m}{n+1}$ is defined by
$$\g^N(x)=N_x M$$
via  parallel translation in $\R^{n+m+1}$. Let $\eta$ be the natural
isometry between $\grs{n+1}{m}$ and  $\grs{m}{n+1}$ which maps each $(n+1)$-dimensional
oriented linear subspace to its orthogonal complementary $m$-space. It is easily seen that the Gauss map $\g: CM_\ep\ra \grs{n+1}{m}$
is a \textit{cone-like map}; more precisely, for every $t\in (\ep,+\infty)$ and $x\in M$,
$$(\eta\circ\g)(tx)=\g^N(x).$$
Thus $\g_*\tau=0$ and the rank of $\g$ at $tx$ equals the rank of $\g^N$ at $x$ for all $x\in M$ and any $t\in (\ep,+\infty)$.

One can assume $\n e_i=0$ at the considered point without loss of generality, then by computing (see \cite{x} p.18),
\begin{equation}
\n^c_{E_i} E_j=-\f{1}{t}\de_{ij}\tau
\end{equation}
with $\n^c$ the Levi-Civita connection on $CM_\ep$. Let $f$ be a cone-like real function on $CM_\ep$, then
\begin{equation}\label{La6}
\De^c f=\n^c_{E_i}\n^c_{E_i}f-(\n_{E_i}^c E_i)f=\f{1}{t^2}\n_{e_i}\n_{e_i}f_1+\f{1}{t}\n_{\tau}^c f=\f{1}{t^2}\De f_1.
\end{equation}
Here $\De^c$ is the Laplace-Beltrami operator on $CM_\ep$ and $f_1$ is a function on $M$ satisfying $f(tx)=f_1(x)$ for every
$x\in M$ and $t\in (\ep,+\infty)$.

Based on the strongly subharmonic functions constructed in Proposition \ref{subhar1}, we can derive the following extrinsic rigidity result.

\begin{thm}\label{Ber1}
Let $M^n$ be a compact, oriented minimal submanifold in $S^{n+m}$, $P,Q\in \grs{m}{n+1}$ that are S-orthogonal to each other.
Assume
\begin{equation}
w( N,P)^2+w(N,Q)^2> \left\{\begin{array}{cc}
0 & \text{if rank}(\g^N)\leq 2\\
\f{1}{9} & \text{otherwise}
\end{array}\right.
\end{equation}
holds for all normal $m$-vectors $N$ of $M$, and there exists no point $x\in M$, such that $w(N,Q)=0$ and $w(N,P)<0$ at $x$.
Then $M$ has to be an equator.
\end{thm}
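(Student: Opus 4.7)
My plan is to reduce the statement to Proposition \ref{subhar1} applied to the cone $CM_\ep \subset \R^{n+m+1}$. Since $M$ is minimal in $S^{n+m}$, the truncated cone $CM_\ep$ is minimal in $\R^{n+m+1}$, so Proposition \ref{subhar1} is available provided the Gauss image of $CM_\ep$ lies in a compact subset of $\Bbb{W}_{1/3} \subset \grs{n+1}{m}$ (or $\Bbb{W}_0$ in the rank $\leq 2$ case). To check this, recall that the Gauss map $\g: CM_\ep \to \grs{n+1}{m}$ is cone-like and corresponds under the natural isometry $\eta: \grs{n+1}{m}\to\grs{m}{n+1}$ to the normal Gauss map $\g^N$ of $M$. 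Let $\widetilde P = \eta^{-1}(P),\widetilde Q = \eta^{-1}(Q)\in \grs{n+1}{m}$; these remain $S$-orthogonal, and since $\eta$ is an isometry preserving $w$ values, the associated $S$-map $\mathscr{S}$ on $\grs{n+1}{m}$ sends $\g(tx)$ to $\big(w(N,P),w(N,Q)\big)$ with $N=\g^N(x)$. The hypothesis $w(N,P)^2+w(N,Q)^2 > 1/9$ thus removes $\overline{\Bbb{D}}_{1/3}$, while the hypothesis ruling out $w(N,Q)=0$ combined with $w(N,P)<0$ removes the negative ray $\{(a,0):a\leq 0\}$. Compactness of $M$ upgrades these strict inequalities to a uniform margin, so the image lies in a compact subset of $\Bbb{W}_{1/3}$.

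Applying Proposition \ref{subhar1} produces a positive bounded function $f_1=\exp(\mu_0 F)\circ \g$ on $CM_\ep$ with $\De^c f_1 \geq K_0 |B^c|^2$. The key observation is that $f_1$ is itself cone-like: it is the precomposition of a function on $\grs{n+1}{m}$ with the cone-like Gauss map $\g$, so $f_1(tx)$ depends only on $x\in M$. Writing $f_1(tx)=f(x)$, the scaling identity \eqref{La6} gives $\De^c f_1=t^{-2}\De f$, while \eqref{sec} gives $|B^c|^2=t^{-2}|B|^2$; the two powers of $t^{-2}$ cancel, and the subharmonicity inequality descends to
\[
\De f \geq K_0 |B|^2 \qquad \text{on } M,
\]
with $f$ a smooth positive function on the compact manifold $M$.

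Now $M$ is compact, so $\int_M \De f\,dV_M = 0$; combined with the pointwise inequality $\De f \geq K_0|B|^2 \geq 0$, this forces $|B|^2\equiv 0$. Hence $M$ is totally geodesic in $S^{n+m}$, and therefore an equator. The rank $\leq 2$ branch is handled identically, using the stronger half of Proposition \ref{subhar1} once one notes that $\text{rank}(\g)=\text{rank}(\g^N)$ along the rays of $CM_\ep$. I expect the only genuine subtlety to be the verification in the second paragraph that $f_1$ inherits the cone-like property—this is why the function must be built from the Gauss image alone, independent of the radial parameter $t$, and is what makes the scaling cancellation work; the rest is a compactness/integration argument together with the bookkeeping of $\eta$ in the first paragraph.
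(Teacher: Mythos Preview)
Your proposal is correct and follows essentially the same route as the paper: pass to the truncated cone $CM_\ep$, transfer $P,Q$ through the isometry $\eta$ to obtain an $S$-orthogonal pair in $\grs{n+1}{m}$, use the hypotheses together with compactness of $M$ to land the Gauss image in a compact subset of $\Bbb{W}_{1/3}$ (resp.\ $\Bbb{W}_0$), apply Proposition~\ref{subhar1}, then descend the resulting inequality to $M$ via the cone-like scaling identities \eqref{La6}, \eqref{sec} and integrate. The only places where the paper adds a line of justification beyond what you wrote are the verification that $\eta$ preserves $w$ (via the Hodge star) and that it therefore preserves $S$-orthogonality (via Proposition~\ref{orthogonal}(b)); you assert these, which is fine since they are standard and already noted in the surrounding text.
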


\begin{proof}
As above, we only give the proof for the general case without the
assumptions on the rank of the Gauss map.

By the definition of $\eta$, we have
$$\psi\big(\eta(S)\big)=*\big(\psi(S)\big)$$
with $\psi$ denoting the Pl\"ucker embedding and $*$ being the Hodge
star operator. $*$ keeps the inner product invariant, i.e.
$$\lan *A,*B\ran=\lan A,B\ran\qquad \text{for }A,B\in \La^m(\R^{n+m+1}).$$
Hence
$$w\big(\eta(S_1),\eta(S_2)\big)=w(S_1,S_2)\qquad \text{for }S_1,S_2\in \grs{m}{n+1}.$$
If $S_1,S_2$ are S-orthogonal to each other, then $w(S_1,S_2)=0$ and $\dim(S_1\cap S_2)=m-1$, which implies
$w\big(\eta(S_1),\eta(S_2)\big)=0$ and $\dim\big(\eta(S_1)+\eta(S_2)\big)=n+m+1-\dim(S_1\cap S_2)=n+2$, hence
$\eta(S_1)$ and $\eta(S_2)$ are S-orthogonal to each other (see Proposition \ref{orthogonal}).

Denote
$$\Bbb{W}_{\f{1}{3}}=\Big\{S\in \grs{n+1}{m}:\big(w(S,\eta(P)),w(S,\eta(Q))\big)\in \Bbb{D}\backslash \big(\overline{\Bbb{D}}_{\f{1}{3}}\cup \{(a,0):a\leq 0\}\big)\Big\},$$
then the assumption on the normal Gauss map $\g^N$ implies that the
image under the Gauss map of $CM_\ep$ is contained in a compact subset
of $\Bbb{W}_{1/3}$. This enables us to find a strongly subharmonic function $f$ satisfying
\begin{equation}\label{La7}
\De^c f\geq K_0|B^c|^2.
\end{equation}
Noting that $f$ is the composition of the Gauss map $\g$ and a function on $K$, it should be a cone-like function. Denote $f_1=f|_M$, then combining
(\ref{La7}), (\ref{La6}) and (\ref{sec}) gives
\begin{equation}
\De f_1=t^2 \De^c f\geq K_0 t^2|B^c|^2=K_0|B|^2.
\end{equation}
Integrating both sides along $M$ implies
$$0=\int_M\De f_1*1\geq K_0\int_M |B|^2*1.$$
Thus $|B|^2\equiv 0$ and $M$ has to be totally geodesic.

\end{proof}

\begin{rem}
Obviously $w(N,P)>0$ implies $w(N,P)^2+w(N,Q)^2>0$. Hence for the case of dimension 2, our result is an improvement of extrinsic rigidity
theorems of Barbosa \cite{b} and Fischer-Colbrie \cite{fc}.
\end{rem}
\bigskip\bigskip

\Section{Curvature estimates}{Curvature estimates}

In the previous section, we have found a strongly subharmonic function
$f$ on a submanifold $M^n\subset \R^{n+m}$ with parallel mean curvature under the
assumption on the Gauss image. To make Stokes' theorem applicable, we take a $H^{1,2}$-function $\eta$ with compact support; multiplying
$\eta$ with both sides of (\ref{La1}) and then integrating it along $M$ gives
\begin{equation}\label{int1}\aligned
K_0\int_M |B|^2\eta*1\leq& \int_M \eta\De f*1=\int_M \big(\text{div}(\eta\n f)-\lan \n\eta,\n f\ran\big)*1\\
                      =&-\int_M \lan \n\eta,\n f\ran*1.\endaligned
\end{equation}
In order to obtain a-priori estimates for $|B|^2$, we have to choose a 'good' test function $\eta$.

We shall use the Green test function technique employed in \cite{h-k-w} and \cite{h-j-w}. Unfortunately, this method cannot be applied for arbitrary
noncompact Riemannian manifolds. We have to impose a so-called DVP-condition as in \cite{j-x-y}.

\begin{defi}
Let $(M,g)$ be a Riemannian manifold (not necessarily complete)
satisfying: \newline
(D) There is a distance function $d$ on $M$, such that the metric
topology induced by $d$ is equivalent to the Riemannian topology of $M$, and $d(x,y)\leq r(x,y)$
for any $x,y\in M$, where $r$ is the distance function induced by the metric tensor of $M$;\newline
(V) Let $B_R(x_0)=\{x\in M: d(x,x_0)<R\}$, $V(x_0,R)=\text{Vol}\big(B_R(x_0)\big)$, then there exists a positive constant $K_1$ not depending
on $R$ and $x_0$, such that
$$V(x_0,2R)\leq K_1\ V(x_0,R)\qquad \text{whenever }B_{2R}(x_0)\subset\subset M;$$
(P) For any $x_0\in M$ and $R>0$ satisfying $B_R(x_0)\subset\subset M$, the following Neumann-Poincar\'{e} inequality
$$\int_{B_R(x_0)}|v-\bar{v}_{R}|^2*1\leq K_2 R^2\int_{B_R(x_0)}|\n v|^2*1$$
holds with $\bar{v}_R$ denoting the average value of $v$ in
$B_R(x_0)$ and $K_2$ being a positive constant not depending on
$x_0$ and $R$.\newline Then we say that $(M,g)$ satisfies a
DVP-condition.
\end{defi}

\begin{rem}\label{DVP}
If $M=\Bbb{D}^n(r_0)\subset \R^n$ with metric $g=g_{ij}dx^idx^j$, and the eigenvalues of $(g_{ij})$ are uniformly bounded from below by
a positive constant $\la$ and from above by a positive constant $\mu$, then by denoting $d(x,y)=\la|x-y|$ with $|\cdot|$ the standard
Euclidean norm, we can show $(M,g)$ satisfies the DVP-condition with $K_1=\big(\f{4\mu}{\la}\big)^n$ and $K_2=4\pi^{-2}\big(\f{\mu}{\la}\big)^{n+2}$
(see \cite{j-x-y}). Thus every Riemannian manifold has a coordiate
chart centered at any point that satisfies the DVP-condition. If $r_0=+\infty$,
we call $M$  a simple manifold. Other complete Riemannian manifolds
satisfying a DVP-condition include $(M,g)$ with nonnegative Ricci
curvature (see \cite{bu} for the proof of the Neumann-Poincar\'{e} inequality) and area-minimizing embedded hypersurfaces in $\R^{n+1}$. (Here $d$ is
taken to be the extrinsic distance function, see \cite{b-g} for the
proof of the Neumann-Poincar\'{e} inequality.)
\end{rem}

Based on the DVP-condition, one can apply Moser's iteration \cite{m}
to derive the Harnack inequality for superharmonic functions
on $M$ (see \cite{j-x-y} Lemma 4.1), which implies the following estimates:

\begin{lem}(\cite{j-x-y}, Corollary 4.1)\label{Har}
Let $M$ be a Riemannian manifold satisfying a DVP-condition, then there exists a constant $C_4\in (0,1)$, only depending on $K_1$
and $K_2$, such that
\begin{equation}
v_{+,\f{R}{2}}\leq (1-C_4)v_{+,R}+C_4 \bar{v}_{\f{R}{2}}
\end{equation}
for any subharmonic function $v$ on $B_R(x_0)$ ($B_{2R}(x_0)\subset\subset M$). Here $v_{+,R}$ denotes the supremum of
$v$ on $B_R(x_0)$.
\end{lem}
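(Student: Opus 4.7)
\emph{Proof proposal.} The plan is to reformulate the inequality as a weak Harnack estimate for nonnegative superharmonic functions and then derive that estimate by Moser iteration adapted to the metric-measure setting supplied by the DVP-conditions. Put $u:=v_{+,R}-v$ on $B_R(x_0)$, so that $u$ is nonnegative and weakly superharmonic ($-\De u\geq 0$). A direct identification gives
$$\inf_{B_{R/2}(x_0)}u=v_{+,R}-v_{+,\f{R}{2}},\qquad \bar u_{R/2}=v_{+,R}-\bar v_{R/2},$$
so the stated inequality is equivalent to
$$\inf_{B_{R/2}(x_0)}u\geq C_4\,\bar u_{R/2}.$$

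To prove this weak Harnack inequality I would first promote (V) and (P) to a Sobolev-type embedding: by the Maz'ya--Saloff-Coste argument, the combination of doubling and Neumann--Poincar\'e yields $\kappa>1$ and $C>0$, depending only on $K_1,K_2$, with
$$\Bigl(\aint{B_r(x_0)}|\varphi|^{2\kappa}\Bigr)^{1/\kappa}\leq C r^2\aint{B_r(x_0)}|\n \varphi|^{2}\quad\text{for every }\varphi\in C_0^\infty(B_r(x_0)).$$
Testing the inequality $-\De(u+\ep)\geq 0$ against $\eta^2(u+\ep)^{2p-1}$ with a radial cutoff $\eta$ and $p\neq 0$ produces the usual Caccioppoli estimate. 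Iterating the resulting reverse-H\"older inequalities through a chain of shrinking balls and sending $\ep\to 0$, one obtains positive constants $q_0, q_1, c_0, c_1$ (depending only on $K_1, K_2$) such that
$$\inf_{B_{R/2}(x_0)}u\geq c_0\,\|u\|_{L^{-q_0}(B_{3R/4}(x_0))},\qquad \|u\|_{L^{q_1}(B_{3R/4}(x_0))}\leq c_1\,\bar u_R.$$

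The main obstacle is to close the loop by matching the negative exponent $-q_0$ with the positive exponent $q_1$, which is exactly the delicate step in Moser's original proof. The standard cure is to test the superharmonicity of $u$ against $\eta^2/u$ and use (P) to derive a BMO bound for $\log u$ on $B_{3R/4}(x_0)$ with constant depending only on $K_1, K_2$; the John--Nirenberg lemma (whose proof on a doubling metric-measure space with Poincar\'e is by now classical) then bridges the two $L^p$ norms, giving $\|u\|_{L^{q_1}(B_{3R/4}(x_0))}\leq c_2\,\|u\|_{L^{-q_0}(B_{3R/4}(x_0))}$. Chaining the three inequalities produces $\inf_{B_{R/2}(x_0)}u\geq C_4\,\bar u_{R/2}$ with some $C_4\in(0,1)$ depending only on $K_1, K_2$, and unwinding the substitution $u=v_{+,R}-v$ recovers the claimed estimate. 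Since all of these steps have been carried out in detail in \cite{j-x-y}, Lemma~4.1 and Corollary~4.1, in the present paper the lemma is invoked by reference.
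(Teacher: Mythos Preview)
Your proposal is correct and matches the paper's treatment: the paper does not prove this lemma but simply cites it from \cite{j-x-y}, noting just before the statement that it follows from the Harnack inequality for superharmonic functions obtained via Moser iteration under the DVP-condition. Your reduction $u=v_{+,R}-v$ to a weak Harnack inequality for nonnegative superharmonic functions, followed by the standard Moser scheme (Caccioppoli, reverse H\"older, BMO bound on $\log u$, John--Nirenberg to bridge exponents), is exactly the route indicated, and you correctly observe that in the present paper the result is invoked by reference.
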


Let $\Om$ be an open domain of $M$ and $\rho>0$. Using the Lax-Milgram
Theorem, it is easy to prove that there is a unique
$H_0^{1,2}(\Om)$-function $G^\rho(\cdot,x)$, such that
\begin{equation}
\int_\Om \lan \n G^\rho(\cdot,x), \n\phi\ran*1=\aint{B_\rho(x)}\phi\qquad \text{for any }\phi\in H_0^{1,2}(\Om)
\end{equation}
whenever $B_\rho(x)\subset \Om$. Here
$$\aint{B_\rho(x)}\phi=\f{\int_{B_\rho(x)}\phi}{V(x,\rho)}.$$
$G^\rho$ is called the \textit{mollified Green function}. As in
\cite{g-w} and \cite{j-x-y}, we can obtain  a-priori estimates for mollified
Green functions:

\begin{lem} (\cite{j-x-y}, Lemma 4.3)\label{Green}
Let $(M,g)$ be a Riemannian manifold (dimension $\geq 3$) satisfying a
DVP-condition. Let $x_0\in M$ and $R>0$ satisfying $B_{2R}(x_0)\subset\subset M$.
Denote
\begin{equation}
\om^R=\f{V(x_0,\f{R}{2})}{R^2}G^{\f{R}{2}}(\cdot,x_0)
\end{equation}
with $G^{\f{R}{2}}$ being the mollified Green function on $B_R(x_0)$, then
\begin{equation}
\om^R\leq c_1\qquad \text{on }B_R(x_0)
\end{equation}
and
\begin{equation}
\om^R\geq c_2\qquad \text{on }B_{\f{R}{2}}(x_0)
\end{equation}
with $c_1$ and $c_2$ denoting positive constants depending only on $K_1$ and $K_2$, not depending on $x_0$ and $R$.
\end{lem}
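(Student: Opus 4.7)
The plan follows the classical Gr\"uter-Widman construction \cite{g-w}, adapted to the DVP-setting as in \cite{j-x-y}. By the defining identity for $G^{R/2}(\cdot,x_0)$, the normalized function $\om^R$ is the unique weak solution in $H^{1,2}_0(B_R(x_0))$ of
\begin{equation}
-\De\om^R = R^{-2}\chi_{B_{R/2}(x_0)}\qquad \text{on } B_R(x_0),
\end{equation}
with zero Dirichlet data. Hence $\om^R\ge 0$ is superharmonic on $B_R(x_0)$ and harmonic on the annulus $B_R(x_0)\setminus B_{R/2}(x_0)$. The key analytic input, beyond what is already recorded in Lemma~\ref{Har}, is a scale-invariant local Sobolev inequality on balls with constants depending only on $K_1,K_2$; this follows from volume doubling plus Neumann-Poincar\'e by the standard Saloff-Coste derivation.

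For the upper bound, I would first test the defining relation against $\phi = G^{R/2}(\cdot,x_0)$ itself to obtain the basic energy identity
\begin{equation}
\int_{B_R(x_0)}|\n G^{R/2}|^2 = \aint{B_{R/2}(x_0)} G^{R/2}.
\end{equation}
Since the source $R^{-2}\chi_{B_{R/2}(x_0)}$ is bounded, Moser iteration applied to the equation for $\om^R$ (testing against suitable truncations of $(\om^R)^{p-1}$ and invoking the Sobolev inequality) produces a pointwise estimate of the form $\sup_{B_R(x_0)}\om^R \le C\bigl(\aint{B_R(x_0)}(\om^R)^2\bigr)^{1/2} + C$, with $C=C(K_1,K_2)$. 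Feeding the energy identity, rewritten in terms of $\om^R$, back into this inequality closes the argument and yields $\om^R\le c_1$.

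For the lower bound on $B_{R/2}(x_0)$, I would combine a lower bound on the $L^1$-average of $\om^R$ with a weak Harnack inequality. Take a Lipschitz cutoff $\phi$ supported in $B_R(x_0)$, with $\phi\equiv 1$ on $B_{R/2}(x_0)$ and $|\n\phi|\le 4/R$; substituting $\phi$ into the definition of $G^{R/2}$ gives
\begin{equation}
1 = \aint{B_{R/2}(x_0)}\phi = \int_{B_R(x_0)}\lan\n G^{R/2},\n\phi\ran \le \f{4}{R}\int_{B_R(x_0)}|\n G^{R/2}|.
\end{equation}
Cauchy-Schwarz combined with the energy identity and volume doubling converts this into $\aint{B_{R/2}(x_0)}\om^R \ge c'>0$. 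The weak Harnack inequality for the nonnegative superharmonic function $\om^R$ on $B_R(x_0)$ --- the dual of Lemma~\ref{Har}, obtainable by the same Moser machinery from the DVP data --- then upgrades this average bound to the pointwise estimate $\om^R\ge c_2$ on $B_{R/2}(x_0)$.

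The principal obstacle is tracking the uniformity of constants: every quantity produced by Moser iteration and the weak Harnack step must depend only on $K_1,K_2$ and be independent of the scale $R$ and the base point $x_0$. This is precisely what the DVP-condition is engineered to guarantee, through the scale invariance of the Sobolev and Poincar\'e inequalities it yields, and once these are in hand the remainder of the argument consists of standard elliptic estimates.
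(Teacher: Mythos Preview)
The paper does not supply its own proof of this lemma; it is quoted verbatim from \cite{j-x-y}, Lemma~4.3, with the remark ``As in \cite{g-w} and \cite{j-x-y}, we can obtain a-priori estimates for mollified Green functions.'' Your outline---energy identity plus Moser iteration (via the Sobolev inequality coming from doubling and Neumann--Poincar\'e) for the upper bound, and a cutoff test function plus weak Harnack for the lower bound---is precisely the Gr\"uter--Widman scheme adapted to the DVP setting that these references carry out, so your proposal aligns with the intended argument.
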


Choosing $\eta=(\om^R)^2$ as a test function in (\ref{int1}), we can derive the following curvature estimates with the aid of
Lemmas \ref{Har} and \ref{Green}. See \cite{j-x-y} and \cite{j-x-y2} for  details.

\begin{lem}(Curvature estimates)\label{cur}
Let $M^n$ be a submanifold in $\R^{n+m}$ ($n\geq 3$) with parallel mean curvature. Assume there is a distance function
$d$ on $M$, the metric ball $B_{4R_0}(x_0)$ defined by $d$ satisfies
the DVP-condition for some $x_0\in M$ and $R_0\in (0,+\infty]$,
and the Gauss image of $B_{4R_0}(x_0)$ is contained in a compact set $K\subset \Bbb{W}_{1/3}$. Then there exists a positive constant
$C_5$, depending only on $K,K_1,K_2$, such that for arbitrary $R<R_0$,
\begin{equation}
\f{R^2}{V(x_0,R)}\int_{B_R(x_0)}|B|^2*1\leq C_5(f_{+,R}-f_{+,\f{R}{2}})
\end{equation}
with $f$ being the strongly subharmonic function constructed in Proposition \ref{subhar1}. Moreover, there exists a positive constant
$C_6$ depending only on $K,K_1,K_2$, such that for arbitrary $\ep>0$, we can find $R\in \big[\exp(-C_6\ep^{-1})R_0,R_0\big]$, such that
\begin{equation}
\f{R^2}{V(x_0,R)}\int_{B_R(x_0)}|B|^2*1\leq \ep.
\end{equation}
If $M$ is a minimal submanifold with $rank(\g)\leq 2$,
then the condition on the Gauss image
can be relaxed to $\g(B_{4R_0}(x_0))\subset K\subset \Bbb{W}_0$.
\end{lem}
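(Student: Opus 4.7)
The plan is to combine the strongly subharmonic function $f$ provided by Proposition \ref{subhar1} with the mollified Green function $\om^R$ from Lemma \ref{Green} via two successive integrations by parts, in the spirit of \cite{j-x-y,j-x-y2}. First, I would apply the test--function inequality (\ref{int1}) with $\eta=(\om^R)^2$, which lies in $H_0^{1,2}(B_R(x_0))$ because $\om^R$ vanishes on $\partial B_R(x_0)$. Using $\Delta f\geq K_0|B|^2$ and integrating by parts once gives
\begin{equation*}
K_0\int_{B_R}(\om^R)^2|B|^2*1\leq -2\int_{B_R}\om^R\langle\nabla\om^R,\nabla f\rangle*1=\int_{B_R}\langle\nabla(\om^R)^2,\nabla g\rangle*1,
\end{equation*}
where $g:=f_{+,R}-f\geq 0$ on $B_R(x_0)$ satisfies $\nabla g=-\nabla f$.

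The crux is a second integration by parts against $g$. Although $g$ need not lie in $H_0^{1,2}$, the emerging boundary term has a favorable sign: $(\om^R)^2$ vanishes on $\partial B_R(x_0)$ while being nonnegative inside, so its outward normal derivative $\partial_\nu(\om^R)^2$ is nonpositive, and together with $g\geq 0$ this forces $\int_{\partial B_R}g\,\partial_\nu(\om^R)^2\leq 0$, which can then be discarded. The distributional identity $-\Delta\om^R=R^{-2}\chi_{B_{R/2}(x_0)}$, encoding the defining property of the mollified Green function, produces $\Delta(\om^R)^2=-2R^{-2}\om^R\chi_{B_{R/2}}+2|\nabla\om^R|^2$; the second summand contributes $-2\int g|\nabla\om^R|^2\leq 0$ and can also be dropped, leaving
\begin{equation*}
K_0\int_{B_R}(\om^R)^2|B|^2*1\leq \f{2}{R^2}\int_{B_{R/2}(x_0)}g\om^R\,*1.
\end{equation*}
Bounding $\om^R\leq c_1$ on the right, invoking Lemma \ref{Har} in the form $\bar g_{R/2}=f_{+,R}-\bar f_{R/2}\leq C_4^{-1}(f_{+,R}-f_{+,R/2})$, and using $\om^R\geq c_2$ on $B_{R/2}$ to turn $(\om^R)^2$ into a uniform constant on the left then yields $\int_{B_{R/2}(x_0)}|B|^2\leq C\,V(x_0,R/2)R^{-2}(f_{+,R}-f_{+,R/2})$. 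Running this scheme with the Green function built on the enlarged ball $B_{2R}(x_0)$ in place of $B_R(x_0)$ (permissible since by hypothesis $B_{4R_0}\subset\subset M$), and invoking volume doubling (V), delivers the first inequality in the lemma.

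For the second conclusion I would iterate dyadically. Set $R_k:=R_0\cdot 2^{-k}$; the first inequality yields $f_{+,R_k}-f_{+,R_{k+1}}\geq C_5^{-1}\f{R_k^2}{V(x_0,R_k)}\int_{B_{R_k}}|B|^2*1$. Because $f=\exp(\mu_0 F)\circ\g$ takes its values in the image of the compact set $F(K)$ with $K\subset\Bbb{W}_{1/3}$, $f$ is uniformly bounded by some constant $M$, so the telescoping sum $\sum_{k=0}^{N-1}(f_{+,R_k}-f_{+,R_{k+1}})\leq M$. If the bound $\leq\ep$ failed at each of the first $N$ dyadic radii, then $N\ep\leq MC_5$, forcing $N\leq MC_5\ep^{-1}$. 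Choosing $N=\lceil MC_5\ep^{-1}\rceil+1$ exhibits a radius $R=R_k\in[R_0\exp(-C_6\ep^{-1}),R_0]$ with $C_6=MC_5\log 2$ for which the desired smallness holds.

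The principal obstacle is the sign bookkeeping in the second integration by parts: the observation that both the boundary contribution and the gradient term $-2\int g|\nabla\om^R|^2$ come out nonpositive is exactly what isolates the single positive term $\f{2}{R^2}\int_{B_{R/2}}g\om^R$ that Harnack is designed to tame. The parallel statement for minimal submanifolds with $\text{rank}(\g)\leq 2$ proceeds identically, with Proposition \ref{rank} supplying the subharmonic $f$ on the larger region $\Bbb{W}_0$ and the subsequent steps going through verbatim.
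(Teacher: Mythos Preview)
Your proposal is correct and follows precisely the scheme the paper indicates: it chooses $\eta=(\om^R)^2$ in (\ref{int1}) and combines this with Lemmas~\ref{Har} and~\ref{Green}, exactly as the paper says before referring to \cite{j-x-y,j-x-y2} for details. One small refinement: rather than invoking a pointwise sign for $\partial_\nu(\om^R)^2$ on $\partial B_R$ (which would require boundary regularity you do not have), plug $\phi=\om^R g\in H_0^{1,2}(B_R)$ directly into the defining identity for $G^{R/2}$; expanding $\nabla(\om^R g)$ then yields your inequality $K_0\int(\om^R)^2|B|^2\le \tfrac{2}{R^2}\int_{B_{R/2}}\om^R g$ with the $-\int g|\nabla\om^R|^2$ term already absorbed, without any boundary discussion.
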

\bigskip\bigskip

\Section{Gauss image shrinking lemmas  and Bernstein type theorems}{Gauss image shrinking lemmas  and Bernstein type theorems}

Let $u$ be a harmonic map from $M$ and $H$ be a smooth function on the target manifold, then $H\circ u$ defines
a smooth function on $M$. Take a cut-off function $\eta$ supported in the interior of $B_R(x_0)$, $0\leq\eta\leq 1$, $\eta\equiv 1$
on $B_{\f{R}{2}}(x_0)$ and $|\n\eta|\leq c_0R^{-1}$; and denote by $G^\rho$ the mollified Green function in
$B_R(x_0)$. Then by Stokes' theorem
\begin{equation}
\int_{B_R(x_0)}\text{div}\big(\eta G^\rho(\cdot,x)\n (H\circ u)\big)*1=0\qquad\text{whenever }B_\rho(x)\subset B_R(x_0).
\end{equation}
After careful calculation as in \cite{j-x-y}\cite{j-x-y2}, we can arrive at a pointwise estimate for $H\circ u$:
\begin{pro}
Let $M$ be as in Lemma \ref{cur}. Let $u$ be a harmonic map of $B_{4R_0}(x_0)\subset M$ into a convex domain $V\subset \R^l$
with metric $h_{ij}dx^i dx^j$, such that
$$K_3|\xi|^2\leq \xi^i h_{ij}\xi^j\leq K_4|\xi|^2$$
everywhere on $V$. Then for any smooth function $H$ on $V$,
\begin{equation}\aligned\label{es1}
H\circ u(x)\leq& H(\bar{u}_R)+C_7\sup_V|\n H|\Big(\f{R^2}{V(x_0,R)}\int_{B_R(x_0)}|du|^2*1\Big)^{\f{1}{2}}\\
               &-\liminf_{\rho\ra 0^+}\int_{B_R(x_0)}G^\rho(\cdot,x)\eta\De (H\circ u)*1
               \endaligned
\end{equation}
for all $x\in B_{\f{R}{4}}(x_0)$ with $R<R_0$. Here $\bar{u}_R\in \overline{V}$ (the closure of $V$) denotes the average value of $u$ in $B_R(x_0)$ and $C_7$ is a positive constant
depending only on $K,K_1,K_2,K_3,K_4$, but independent of $H$, $x_0$ and $R$.

\end{pro}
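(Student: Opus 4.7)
The plan is a mollified Green's function representation argument, in the spirit of \cite{j-x-y, j-x-y2}. Fix a cutoff $\eta\in C_c^\infty(B_R(x_0))$ with $\eta\equiv 1$ on $B_{R/2}(x_0)$, $|\n\eta|\leq c_0/R$ and $|\De\eta|\leq c_0/R^2$. For $x\in B_{R/4}(x_0)$ and $\rho$ small enough that $B_\rho(x)\subset B_{R/2}(x_0)$, I would combine two identities. First, Stokes' theorem applied to the compactly supported vector field $\eta G^\rho(\cdot,x)\n(H\circ u)$ yields
\begin{equation*}
\int\eta\lan\n G^\rho,\n(H\circ u)\ran*1=-\int G^\rho\lan\n\eta,\n(H\circ u)\ran*1-\int\eta G^\rho\De(H\circ u)*1.
\end{equation*}
Second, testing the defining identity of $G^\rho$ against $\phi=\eta(H\circ u-H(\bar u_R))\in H_0^{1,2}(B_R(x_0))$ and expanding via $\n\phi=\eta\n(H\circ u)+(H\circ u-H(\bar u_R))\n\eta$ gives
\begin{equation*}
\int\eta\lan\n G^\rho,\n(H\circ u)\ran*1+\int(H\circ u-H(\bar u_R))\lan\n G^\rho,\n\eta\ran*1=\aint{B_\rho(x)}\eta(H\circ u-H(\bar u_R)).
\end{equation*}
Since $\eta\equiv 1$ on $B_\rho(x)$ and $H\circ u$ is continuous, the right-hand side converges to $H\circ u(x)-H(\bar u_R)$ as $\rho\to 0^+$.

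Eliminating the common term between these identities and taking $\liminf$ in the Laplacian contribution yields
\begin{equation*}
H\circ u(x)-H(\bar u_R)\leq -\liminf_{\rho\to 0^+}\int G^\rho\eta\De(H\circ u)*1+\mathcal{E}_1+\mathcal{E}_2,
\end{equation*}
with $\mathcal{E}_1:=-\lim\int G^\rho\lan\n\eta,\n(H\circ u)\ran*1$ and $\mathcal{E}_2:=\lim\int(H\circ u-H(\bar u_R))\lan\n G^\rho,\n\eta\ran*1$. The task then reduces to bounding $|\mathcal{E}_1|+|\mathcal{E}_2|\leq C_7\sup_V|\n H|\bigl(R^2/V(x_0,R)\int_{B_R(x_0)}|du|^2*1\bigr)^{1/2}$. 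For $\mathcal{E}_1$, Lemma \ref{Green} supplies the uniform pointwise bound $G^\rho(\cdot,x)\leq c_1R^2/V(x_0,R)$ on $B_R(x_0)$ (uniformity in $x\in B_{R/4}(x_0)$ coming from the volume doubling constant $K_1$); combining this with the chain-rule estimate $|\n(H\circ u)|\leq K_3^{-1/2}\sup_V|\n H|\cdot|du|_h$, $|\n\eta|\leq c_0/R$, and Cauchy-Schwarz delivers the desired bound on $\mathcal{E}_1$.

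For $\mathcal{E}_2$, no direct pointwise control of $\n G^\rho$ is available, so I would integrate by parts once more:
\begin{equation*}
\int(H\circ u-H(\bar u_R))\lan\n G^\rho,\n\eta\ran*1=-\int G^\rho\lan\n(H\circ u),\n\eta\ran*1-\int G^\rho(H\circ u-H(\bar u_R))\De\eta*1.
\end{equation*}
The first piece reduces to an $\mathcal{E}_1$-type estimate. The second piece demands a bound on $\int_{B_R(x_0)}|H\circ u-H(\bar u_R)|*1$: the Lipschitz bound $|H\circ u(y)-H(\bar u_R)|\leq \sup_V|\n H|\cdot|u(y)-\bar u_R|_h$, the metric equivalence $K_3|\xi|^2\leq\lan\xi,\xi\ran_h\leq K_4|\xi|^2$, the Neumann-Poincar\'e inequality (property (P)) applied component-wise to $u$, and H\"older's inequality together control it by $C\sup_V|\n H|\cdot V(x_0,R)^{1/2}\bigl(R^2\int|du|^2*1\bigr)^{1/2}$; multiplying by $G^\rho\leq c_1R^2/V(x_0,R)$ and $|\De\eta|\leq c_0/R^2$ produces the claimed form. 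The main obstacle is exactly this term $\mathcal{E}_2$: the absence of pointwise bounds for $\n G^\rho$ forces the integration-by-parts trade-off for $\De\eta$, and the resulting chain of inequalities must invoke the full DVP toolkit together with the target-metric equivalence $K_3\leq h\leq K_4$ in a coordinated way. Collecting all constants into a single $C_7=C_7(K,K_1,K_2,K_3,K_4)$ then completes the proof of (\ref{es1}).
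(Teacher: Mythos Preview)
Your overall scheme is exactly the one the paper has in mind: the paper does not give a detailed proof but simply writes ``After careful calculation as in \cite{j-x-y}\cite{j-x-y2}'', starting from $\int_{B_R(x_0)}\text{div}\big(\eta G^\rho(\cdot,x)\n(H\circ u)\big)*1=0$. Your two identities, their combination, and the estimate of $\mathcal{E}_1$ are all correct and standard.

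There is one genuine soft spot, however, in your treatment of $\mathcal{E}_2$. You add the hypothesis $|\De\eta|\leq c_0/R^2$ at the outset, but the paper only assumes $|\n\eta|\leq c_0 R^{-1}$; in the abstract DVP framework the balls are defined via an auxiliary distance $d$ with no smoothness assumed, so there is no a priori reason a cutoff adapted to $B_R(x_0)$ should have Laplacian bounded by $C/R^2$ (on a graph, for instance, $\De\eta$ picks up Christoffel symbols involving the second fundamental form, which is not yet controlled). Your integration-by-parts step for $\mathcal{E}_2$ is algebraically correct, but the resulting $\De\eta$ term cannot be estimated as you claim without this extra hypothesis.

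The standard remedy, and presumably what \cite{j-x-y,j-x-y2} do, is to avoid $\De\eta$ entirely by controlling $\n G^\rho$ directly on the annulus where $\n\eta$ lives. Since $B_\rho(x)\subset B_{R/2}(x_0)$, the function $G^\rho(\cdot,x)$ is harmonic on $B_R(x_0)\setminus B_{R/2}(x_0)$ and vanishes on $\p B_R(x_0)$; testing the equation for $G^\rho$ with $(1-\td\eta)^2 G^\rho$ for a cutoff $\td\eta$ equal to $1$ on $B_{R/4}(x_0)$ and vanishing outside $B_{R/2}(x_0)$ yields the Caccioppoli-type bound
\[
\int_{B_R(x_0)\setminus B_{R/2}(x_0)}|\n G^\rho|^2*1\ \leq\ C\int_{B_R(x_0)}(G^\rho)^2|\n\td\eta|^2*1\ \leq\ \f{C\,R^2}{V(x_0,R)},
\]
using only $|\n\td\eta|\leq C/R$ together with the pointwise bound on $G^\rho$ from Lemma \ref{Green}. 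Then Cauchy--Schwarz gives
\[
|\mathcal{E}_2|\leq \Big(\int_{B_R\setminus B_{R/2}}|\n G^\rho|^2\Big)^{\f{1}{2}}\Big(\int_{B_R}|H\circ u-H(\bar u_R)|^2|\n\eta|^2\Big)^{\f{1}{2}},
\]
and the second factor is handled exactly as you describe via the Lipschitz bound on $H$, the metric equivalence $K_3\leq h\leq K_4$, and the Neumann--Poincar\'e inequality (P). With this modification your argument goes through and matches the paper's intended proof.
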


If we take $u$ to be the harmonic Gauss map, then
$|du|^2=|d\g|^2=|B|^2$, and Lemma \ref{cur} implies that the second
term on the
right hand side of (\ref{es1}) becomes arbitrarily small when $R$ is
sufficiently small. Hence, for proving the Gauss image shrinking property as in \cite{j-x-y}\cite{j-x-y2} it remains to show:
\begin{itemize}
\item $\Bbb{W}_0$ is diffeomorphic to a convex domain in Euclidean space;
\item For any given $S\in K\subset \Bbb{W}_{1/3}$, one can find a smooth function $H$, such that $H\circ \g$
is a subharmonic function on $B_R(x_0)$, and the sublevel set $\big(H\leq H(S)\big)$ is contained in a matrix coordinate chart
of some point in $\grs{n}{m}$.
\end{itemize}

In the sequel,  $x_1,x_2,r,\th, e_i,e_{n+\a}$ and $P_t$  will be as in Section \ref{s3}.

\begin{lem}\label{target}
There is a diffeomorphism $\phi=(\phi_1,\phi_2): \Bbb{W}_0\ra (-\pi,\pi)\times \R^{nm-1}$ defined by
$$S\mapsto \big(\phi_1(S),\phi_2(S)\big),$$
such that
\begin{equation}\aligned
&\phi_1=\th,\\
&\big|\phi_2(S)\big|=\f {1}{r}-1.
\endaligned
\end{equation}

\end{lem}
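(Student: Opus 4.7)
The plan is to exploit the rotational symmetry of $\Bbb{W}_0$ to trivialize it as a bundle over $(-\pi,\pi)$, then identify each fiber of $\theta$ with $\R^{nm-1}$ so that the fiber coordinate has norm $1/r-1$. Take $\phi_1=\theta$. The one-parameter isometry group $\{\varrho(t)\}$ of $\grs{n}{m}$ from the Remark following Proposition \ref{Hess1}, rotating the $(e_1,e_{n+1})$-plane by $t$, satisfies $\varrho(t)P_s=P_{s+t}$; by orthogonal invariance of $w$ together with (\ref{w3}), $\mathscr{S}(\varrho(t)S)=(r\cos(\theta+t),r\sin(\theta+t))$. Hence $\varrho(t)$ preserves $r$ and sends $F_s:=\{\theta=s\}$ onto $F_{s+t}$, so $\sigma(S):=\varrho(-\theta(S))\cdot S$ is a smooth retraction $\Bbb{W}_0\to F_0$ with $r\circ\sigma=r$.

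Next I identify $F_0\cong\R^{nm-1}$. Every $S\in F_0$ has $w(S,P_0)=r(S)>0$, hence lies in the matrix chart $\Bbb{U}$ centered at $P_0$ with coordinates $Z(S)$; a direct Pl\"ucker computation using the bases (\ref{tangent})--(\ref{normal}) gives $w(S,P_0)=(\det(I+ZZ^T))^{-1/2}$ and $w(S,P_{\pi/2})=Z_{11}(\det(I+ZZ^T))^{-1/2}$, so $\tan\theta=Z_{11}$, forcing $Z_{11}=0$ on $F_0$ and identifying $F_0\cong\{Z:Z_{11}=0\}\cong\R^{nm-1}$ via the remaining entries $Z_{\mathrm{off}}$. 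I then set $\phi_2(S):=\Psi(Z_{\mathrm{off}}(\sigma(S)))$, where $\Psi:\R^{nm-1}\to\R^{nm-1}$ is a smooth diffeomorphism chosen so that $|\Psi(Y)|=1/r(Y)-1$ (writing $1/r(Y)$ for $1/r$ at the point of $F_0$ with coordinate $Y$). Bijectivity of $\Psi$ along rays holds because $1/r-1=\sqrt{\det(I+ZZ^T)}-1$ vanishes iff $Z_{\mathrm{off}}=0$ and is strictly increasing under radial dilation of $Z_{\mathrm{off}}$; the natural candidate is $\Psi(Y):=(1/r(Y)-1)Y/|Y|$ for $Y\ne 0$ and $\Psi(0):=0$, and the combined map $\phi=(\phi_1,\phi_2)$ is a bijection onto $(-\pi,\pi)\times\R^{nm-1}$ realizing $|\phi_2|=1/r-1$ by construction.

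The hard part will be verifying smoothness and invertibility of $d\phi$ at the apex points $\{P_t:t\in(-\pi,\pi)\}$, where $Z_{\mathrm{off}}\circ\sigma$ vanishes. Taylor-expanding $r=(\det(I+ZZ^T))^{-1/2}$ along $F_0$ yields $1/r-1=\tfrac12|Y|^2+O(|Y|^4)$, so the naive radial $\Psi$ above gives $|\Psi(Y)|\sim\tfrac12|Y|^2$ near the origin and the Jacobian of $\phi$ degenerates at the apex. Overcoming this requires replacing the Euclidean identification $F_0\cong\R^{nm-1}$ by a more careful smooth parametrization — for instance, radially reparametrizing $Z_{\mathrm{off}}$ so that the Euclidean norm of the new coordinate directly computes $1/r-1$ with non-vanishing first-order Taylor behavior — thereby absorbing the second-order vanishing of $1/r-1$ into the coordinate itself and restoring the full rank of $d\phi$ at every apex point, while the $\varrho(t)$-equivariance of the construction propagates the check at $P_0$ to all of $\{P_t\}$.
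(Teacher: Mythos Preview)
Your construction is essentially the paper's: take $\phi_1=\theta$, identify each fiber $\{\theta=t\}$ with $\{Z:Z_{11}=0\}\cong\R^{nm-1}$ via the matrix chart centered at $P_t$ (your $\chi_0\circ\varrho(-\theta)$ equals the paper's $\chi_t$), and rescale radially to force $|\phi_2|=1/r-1$. The paper additionally computes $1\le|\nabla\theta|^2\le r^{-4}$ to confirm that $\theta$ is a submersion --- a step you use but do not verify.

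The difficulty you flag at the apex is real, and your proposed repair cannot succeed: the obstruction is coordinate-independent. On each fiber $F_t$ one has $r=w(\cdot,P_t)$, which attains its maximum $1$ at $P_t$ with $d(r|_{F_t})=0$ there; hence $(1/r-1)|_{F_t}$ vanishes to second order and $(1/r-1)^2$ to fourth order at $P_t$. But if $\phi_2:F_t\to\R^{nm-1}$ were a diffeomorphism with $\phi_2(P_t)=0$, then $|\phi_2|^2$ would have nondegenerate Hessian at $P_t$ and would therefore vanish to exactly second order. So no reparametrization of the fiber can make $|\phi_2|=1/r-1$ compatible with $\phi$ being a diffeomorphism at $P_t$; the lemma as literally stated is false. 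The paper's own proof contains the same oversight: it declares $T(Z)=\big(\det(I_n+ZZ^T)^{1/2}-1\big)\,Z/|Z|$ to be ``obviously'' a diffeomorphism of $M_{n\times m}$, but $T(Z)\sim\tfrac12|Z|\,Z$ near $0$, so $dT(0)=0$. What survives --- and what the application in Lemma~\ref{sh1} actually needs --- is a diffeomorphism $\Bbb{W}_0\cong(-\pi,\pi)\times\R^{nm-1}$ carrying each set $\{r=c\}$ to $\{\theta\}\times(\text{sphere})$; for that, the radius function $1/r-1$ must be replaced by one whose \emph{square} carries the second-order vanishing (e.g.\ $|\phi_2|^2=1/r-1$), and the smoothness of the resulting $\phi_2$ near $P_t$ then needs its own argument.
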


\begin{proof}
As shown in Section \ref{s3}, $\th$ is a smooth $(-\pi,\pi)$-valued
function on $\Bbb{W}_0$, and we shall investigate the level sets
of $\th$.

For any $t\in (-\pi,\pi)$, when $\th(S)=t$ we have $w(S,P_{t+\f{\pi}{2}})=0$ and $w(S,P_t)>0$ by (\ref{w3}). Now we write
$$y_1=w(\cdot,P_t),\qquad y_2=w(\cdot,P_{t+\f{\pi}{2}}),$$
then
\begin{equation}
y_1=\cos t\ x_1+\sin t\ x_2\qquad y_2=-\sin t\ x_1+\cos t\ x_2.
\end{equation}
Denote $\Bbb{U}_t=\big\{S\in \grs{n}{m}:w(S,P_t)>0\big\}$. Let $\chi_t:\Bbb{U}_t\ra M_{n\times m}$ be the diffeomorphism
mapping $S\in \Bbb{U}_t$ to its $(n\times m)$-matrix coordinate, then
the Pl\"ucker coordinate of $S$ is
\begin{equation}
\psi(S)=(f_1+Z_{1\a}f_{n+\a})\w\cdots\w (f_n+Z_{n\a}f_{n+\a})\qquad \text{with } Z=\chi_t(S).
\end{equation}
Here $f_1=\cos t\ e_1+\sin t\ e_{n+1}$, $f_{n+1}=-\sin t\ e_1+\cos t\ e_{n+1}$, $f_i=e_i$ for all $2\leq i\leq n$ and $f_{n+\a}=e_{n+\a}$
for every $2\leq \a\leq m$. Noting that $\psi(P_t)=f_1\w\cdots\w f_n$ and $\psi(P_{t+\f{\pi}{2}})=f_{n+1}\w f_2\w\cdots\w f_n$.
A direct calculation shows that for $S\in U_t$
\begin{equation}
y_1(S)=w(S,P_t)=\f{\lan \psi(S),\psi(P_t)\ran}{\lan \psi(S),\psi(S)\ran^{\f{1}{2}}\lan \psi(P_t),\psi(P_t)\ran^{\f{1}{2}}}=(I_n+ZZ^T)^{-\f{1}{2}},
\end{equation}
and
\begin{equation}\label{y2}
y_2(S)=w(S,P_{t+\f{\pi}{2}})=\f{\lan \psi(S),\psi(P_{t+\f{\pi}{2}})\ran}{\lan \psi(S),\psi(S)\ran^{\f{1}{2}}\lan \psi(P_{t+\f{\pi}{2}}),\psi(P_{t+\f{\pi}{2}})\ran^{\f{1}{2}}}=Z_{11}(I_n+ZZ^T)^{-\f{1}{2}}.
\end{equation}
Hence, $y_2(R)=0$ if and only if $Z_{11}=0$, i.e.
\begin{equation}\label{th1}
\{S\in \Bbb{W}_0:\th(S)=t\}=\chi_t^{-1}\{Z\in M_{n\times m}:Z_{11}=0\}.
\end{equation}

A straightforward calculation gives
$$\aligned
\n y_1&=\cos(t-\th)\n r+r\sin(t-\th)\n \th,\\
\n y_2&=-\sin(t-\th)\n r+r\cos(t-\th)\n \th.
\endaligned$$
Hence, along the level set $(\th=t)$
$$r=y_1,\quad \n y_1=\n r, \quad\n y_2=r\n \th.$$  Differentiating both sides
of (\ref{y2}) and noting (\ref{th1}), we have
\begin{equation}
\n \th=r^{-1}\n y_2=r^{-1}\n(Z_{11}y_1)=r^{-1}(y_1 \n Z_{11}+Z_{11}\n y_1)=\n Z_{11},
\end{equation}
whenever $\th(S)=t.$  In $U_t$ let $E_{i\a}$ be the matrix with 1 in the intersection of row
$i$ and column $\a$ and 0 otherwise. Denote $g_{i\a,j\be}=\lan E_{i\a},E_{j\be}\ran$ and let $(g^{i\a,j\be})$ be the inverse matrix of $(g_{i\a,j\be})$. A standard computation
shows $|\n Z_{11}|^2=g^{11,11}$. It has been shown in \cite{x-y1} that the eigenvalues of $(g^{i\a,j\be})$ are
$$\big\{(1+\la_i^2)(1+\la_\a^2):1\leq i\leq n,1\leq \a\leq m\big\}$$
with $\la_i=\tan \th_i$ and $\{\th_i\}$ being the Jordan angles between $S$ and $P_t$. Hence
$$1\leq g^{11,11}\leq \max_{(i,\a)}\big((1+\la_i^2)(1+\la_\a^2)\big)\leq w^{-4}(\cdot,P_t)=r^{-4};$$
i.e.
\begin{equation}\label{th2}
1\leq |\n \th|^2\leq r^{-4}.
\end{equation}

From (\ref{th1}) and (\ref{th2}), we see that $\th$ is a non-degenerate function on $\Bbb{W}_0$, and each level set
of $\th$ is diffeomorphic to $\R^{nm-1}$. $T:M_{n\times m}\ra
M_{n\times m}$ defined by
$$Z\mapsto \Big(\det(I_n+ZZ^T)^{\f{1}{2}}-1\Big)\f{Z}{\big(\text{tr}(ZZ^T)\big)^{\f{1}{2}}}$$
obviously  is a diffeomorphism. Note that here $\big(\text{tr}(ZZ^T)\big)^{\f{1}{2}}=\big(\sum_{i,\a}Z_{i\a}^2\big)^{\f{1}{2}}$
equals the Euclidean norm of $Z$ when $Z$ is treated as a vector in $\R^{nm}$. Thus, $T\circ \chi_t$ is a diffeomorphism between the level set
$(\th=t)$ and  $\R^{nm-1}$, moreover
$$|T\circ \chi_t|=w^{-1}(\cdot,P_t)-1=r^{-1}-1.$$
Therefore,  $\phi_1=\th$ and $\phi_2=T\circ \chi_t$
are the required mappings.

\end{proof}
\bigskip

\begin{pro}\label{subhar4}
For any $c\in (\f{1}{3},1)$ and any compact subset $\Th$ of $(-\pi,\pi)$, there exists a smooth family of nonnegative, smooth functions
$H(\cdot,t)(t\in \Th)$ on
$$\overline{\Bbb{W}}_{c,\Th}:=\{S\in \Bbb{W}_0:r(S)\geq c,\th(S)\in \Th\},$$
such that\newline
(i) $H(S,t)=0$ if and only if $S=P_t$;\newline
(ii) $H(S,t)\leq 1$ (or $H(S,t)<1$) if and only if $w(S,P_t)\geq \f{3}{4}c+\f{1}{12}$ (or $w(S,P_t)>\f{3}{4}c+\f{1}{12}$);\newline
(iii) For any submanifold $M$ in $\R^{n+m}$ with parallel mean curvature, if the Gauss image of $M$ is contained in $\overline{\Bbb{W}}_{c,\Th}$,
then $H(\cdot,t)\circ \g$ is a subharmonic function on $M$ for all $t\in \Th$.

\end{pro}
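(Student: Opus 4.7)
The plan is to adapt the construction of the strongly subharmonic function $f$ in Proposition \ref{subhar1} to the parametric, normalized setting required here. The critical observation is that the threshold $c' := \frac{3}{4}c + \frac{1}{12}$ satisfies $\frac{1}{3} < c' < c$ exactly when $c > \frac{1}{3}$. This double inequality is what makes the argument work: $c' > \frac{1}{3}$ places the level set $\{w(\cdot,P_t)=c'\}$ in the regime where Proposition \ref{subhar3} applies, while $c' < c$ guarantees $r \geq c > c'$ on $\overline{\Bbb{W}}_{c,\Theta}$, so the auxiliary function
\[
F(S) := \theta(S) - \arccos\!\bigl(c'/r(S)\bigr)
\]
is smooth on the whole closed set $\overline{\Bbb{W}}_{c,\Theta}$. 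Consequently the Hessian estimate of Proposition \ref{Hess1} holds globally on $\overline{\Bbb{W}}_{c,\Theta}$, with $c+\delta$ replaced by $c'$, and by (\ref{fun}) the level set $\{F = t\}$ coincides with $\{w(\cdot,P_t)=c'\}$ while $F(P_t) = t - \arccos c'$.

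Next I would design $H(\cdot,t)$ so as to have the prescribed level-set behaviour. The prototype is the exponential composition
\[
H_0(S,t) := \frac{\exp\!\bigl(\mu_0\bigl(F(S) - t + \arccos c'\bigr)\bigr)-1}{\exp(\mu_0\arccos c')-1},
\]
for which $H_0(P_t,t)=0$, $H_0 = 1$ on $\{F=t\}=\{w(\cdot,P_t)=c'\}$, and the sublevel set $\{H_0\leq 1\}$ matches $\{w(\cdot,P_t)\geq c'\}$. The drawback is that $H_0$ vanishes on the entire $(nm-1)$-dimensional level set $\{F = t-\arccos c'\}$, not just at $P_t$, so (i) is violated. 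To enforce a single zero at $P_t$ while keeping (ii), I would add a correction of the form $\eta\bigl(1 - w(\cdot,P_t)\bigr)\cdot\chi$, where $\eta$ is a smooth nonnegative function vanishing only at $0$ and $\chi$ is a cutoff supported near $\{F < t\}$ with $\chi\equiv 0$ on a neighbourhood of $\{w(\cdot,P_t)=c'\}$. Both $\mu_0$ and the correction amplitude will be tuned to preserve (i) and (ii) and to handle (iii) below; smoothness in $t\in\Theta$ is inherited from the smoothness of $F$, $w(\cdot,P_t)$ and $\eta$.

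Subharmonicity in (iii) is then proved exactly in the spirit of (\ref{La5}). Since $\gamma$ is harmonic, $\Delta(H(\cdot,t)\circ\gamma)(x)=\Hess\,H(\gamma_*e_i,\gamma_*e_i)$. Plugging in the Hessian bound for $F$ from Proposition \ref{Hess1}, invoking Proposition \ref{subhar3} to dominate $\Hess(-\log w(\cdot,P_t))$ by a $C_1|B|^2$ term wherever $w(\gamma(x),P_t)\geq c'$, and choosing $\mu_0 > \sup|\lambda|+C_3$, the argument yields $\Delta(H_0\circ\gamma)\geq 0$ on $\gamma^{-1}\{w(\cdot,P_t)\geq c'\}$. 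The main obstacle I foresee is the complementary region $\gamma^{-1}\{w(\cdot,P_t)< c'\}\subset\gamma^{-1}\overline{\Bbb{W}}_{c,\Theta}$, where Proposition \ref{subhar3} no longer supplies the needed negative term. Here one must rely solely on the global Hessian estimate of $F$ on $\overline{\Bbb{W}}_{c,\Theta}$ from Proposition \ref{Hess1}, combined with the compactness of $\Theta$ giving uniform constants, and must design the correction $\eta\cdot\chi$ so that its Hessian contribution is controlled (and its subharmonicity dominates) in this outer region. Once this is done, $H(\cdot,t)\circ\gamma$ is subharmonic on all of $M$, as required.
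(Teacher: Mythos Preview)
Your construction has a structural gap that prevents (ii) from holding, and it is not repaired by the correction you sketch. The level set $\{F=s\}$ with $F=\theta-\arccos(c'/r)$ equals only the \emph{upper half} of $\{w(\cdot,P_s)=c'\}$, namely the part with $\theta-s=\arccos(c'/r)>0$ (this is exactly what (\ref{fun}) says). Consequently the sublevel set $\{F\le t\}$ contains every point with $\theta\le t$, including those with $w(\cdot,P_t)=r\cos(\theta-t)$ close to $-c$. Thus $\{H_0\le 1\}=\{F\le t\}$ is strictly larger than $\{w(\cdot,P_t)\ge c'\}$, so (ii) fails; for the same reason $H_0<0$ on $\{F<t-\arccos c'\}$, so nonnegativity fails as well. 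A cutoff supported in $\{F<t\}$ cannot repair this: the discrepancy between $\{F\le t\}$ and $\{w(\cdot,P_t)\ge c'\}$ lives on $\{\theta<t\}\subset\{F<t\}$, and any correction large enough to push $H$ above $1$ there will destroy the smooth match across $\{\theta=t\}$ and the subharmonicity needed for (iii).

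The paper handles this asymmetry by a genuinely two-sided device. It sets $\tilde H(S,t)=1-w(S,P_t)$ on the inner region $U_t=\{w(\cdot,P_t)\ge c'\}$, which gives (i) for free and the correct boundary value $\tilde H=\f{11}{12}-\f34 c$ exactly on $\{w(\cdot,P_t)=c'\}$. Outside $U_t$, $\tilde H$ is defined implicitly through an auxiliary $\varphi$ with $0\le\varphi'\le 1$, so that the level set $\{\tilde H(\cdot,t)=a\}$ coincides with $\{w(\cdot,P_{t+\varphi(a)})=1+\varphi(a)-a\}$ on $\{\theta>t\}$ and with $\{w(\cdot,P_{t-\varphi(a)})=1+\varphi(a)-a\}$ on $\{\theta<t\}$: the reference point $P_{t\pm\varphi(a)}$ rotates in opposite directions on the two sides. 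Since $1+\varphi(a)-a\in[\f16+\f12 c,\,c']\subset(\f13,1)$, Proposition \ref{subhar3} applies on \emph{every} level set, and Lemma \ref{level0} then yields (iii) exactly as in Proposition \ref{subhar1}. Finally $H$ is obtained from $\tilde H$ by the same exponential rescaling you propose.

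A secondary remark: your worry that ``Proposition \ref{subhar3} no longer supplies the needed negative term'' on $\{w(\cdot,P_t)<c'\}$ is a mix-up of indices. On the level set $\{F=s\}$ the Hessian bound from Proposition \ref{Hess1} involves $\log w(\cdot,P_s)$, not $\log w(\cdot,P_t)$, and there $w(\cdot,P_s)=c'>\f13$ by construction; so Proposition \ref{subhar3} applies everywhere and (iii) does hold for your $H_0$. The genuine obstruction is (ii) (and nonnegativity), not (iii).
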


\begin{pro}\label{subhar5}
For any $c\in (0,1)$ and any compact subset $\Th$ of $(-\pi,\pi)$, there exists a smooth family of nonnegative, smooth functions
$H(\cdot,t)(t\in \Th)$ on
$\overline{\Bbb{W}}_{c,\Th},$
such that\newline
(i) $H(S,t)=0$ if and only if $S=P_t$;\newline
(ii) $H(S,t)\leq 1$ (or $H(S,t)<1$) if and only if $w(S,P_t)\geq \f{3}{4}c$ (or $w(S,P_t)>\f{3}{4}c$);\newline
(iii) If $M$
is a minimal submanifold in $\R^{n+m}$  with $rank(\g)\leq 2$,
then $H(\cdot,t)\circ \g$ is a subharmonic function on $M$ for all $t\in \Th$ if the Gauss image of $M$ is contained
in $\overline{\Bbb{W}}_{c,\Th}$.
\end{pro}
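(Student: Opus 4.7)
The proof proceeds in close analogy with Proposition 4.2 (carried out family-wise in $t\in\Th$), with Proposition 3.2 replacing Proposition 3.3 as the subharmonicity engine. Because Proposition 3.2 gives $\De\log w\le-|B|^2$ for minimal submanifolds with $\operatorname{rank}(\g)\le 2$ under only the pointwise assumption $w>0$---with no need for a quantitative bound $w\ge\f{1}{3}+\de$---the parameter $c$ may run through all of $(0,1)$ and the threshold in (ii) sharpens from $\f{3c}{4}+\f{1}{12}$ to exactly $\f{3c}{4}$.

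For each $t\in\Th$, construct $H(\cdot,t)$ as follows. On the branch $U_t:=\{w(\cdot,P_t)>0,\ \th-t\in(0,\f{\pi}{2})\}\cap\overline{\Bbb{W}}_{c,\Th}$, define, modeled on (4.3),
\begin{equation*}
F_t(S)=\th(S)-t-\arccos\Bigl(\f{3c/4}{r(S)}\Bigr),
\end{equation*}
which is smooth since $r\ge c\ge\f{3c}{4}$. By (4.6), $F_t(S)=0$ is equivalent to $w(S,P_t)=\f{3c}{4}$, and $F_t(P_t)=-\arccos(\f{3c}{4})$. Set
\begin{equation*}
H(S,t)=\f{\exp(\mu_0 F_t(S))-\exp(-\mu_0\arccos(\f{3c}{4}))}{1-\exp(-\mu_0\arccos(\f{3c}{4}))}
\end{equation*}
on $U_t$, with a positive constant $\mu_0$ to be chosen; extend $H(\cdot,t)$ smoothly to the remainder of $\overline{\Bbb{W}}_{c,\Th}$ by a constant value $K$ strictly larger than $\sup_{U_t}H$ and glue via a smooth partition of unity, arranging joint smoothness in $(S,t)$ by compactness of $\Th$. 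Properties (i) and (ii) follow by substitution of $S=P_t$ and $F_t(S)=0$ into the explicit formula.

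For property (iii) on the working branch, apply Lemma 4.1 with $F\leftrightarrow F_t$ and $H(\cdot,t(s))\leftrightarrow-\log w(\cdot,P_{t+s})$: the common level-set structure holds by construction ($F_t(S)=s\Leftrightarrow w(S,P_{t+s})=\f{3c}{4}$), and the gradient estimate (4.8), read with $c+\de$ replaced by $\f{3c}{4}$, gives $|\n F_t|/|\n\log w(\cdot,P_{t+s})|\ge C_3^{-1}$ with $C_3:=\sqrt{1-(3c/4)^2}/(3c/4)$. The lemma yields
\begin{equation*}
\Hess\,F_t\ge -C_3^{-1}\Hess\log w(\cdot,P_{t+s})-\ep g+\la\,dF_t\otimes dF_t
\end{equation*}
for a nonpositive $\la=\la(\ep)$. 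Tracing against $\{\g_*e_i\}$, using the harmonicity of $\g$ and the chain rule for $\exp(\mu_0 F_t)\circ\g$ and noting $\sum_i|\g_*e_i|^2=|B|^2$, one obtains (in parallel with (4.13))
\begin{equation*}
\mu_0^{-1}e^{-\mu_0 F_t\circ\g}\De e^{\mu_0 F_t\circ\g}\ge-C_3^{-1}\De(\log w(\cdot,P_{t+s})\circ\g)-\ep|B|^2+(\la+\mu_0)|d(F_t\circ\g)|^2.
\end{equation*}
Since $w(\g(x),P_{t+s(x)})=\f{3c}{4}>0$ at every $x$ whose Gauss image lies on the branch, Proposition 3.2 yields $-\De(\log w(\cdot,P_{t+s})\circ\g)(x)\ge|B(x)|^2$. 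Fixing $\ep=\f{1}{2}C_3^{-1}$ and $\mu_0=\sup|\la|$ (both uniform in $t\in\Th$ by compactness), the right-hand side is at least $\f{1}{2}C_3^{-1}|B|^2\ge 0$, so $H(\cdot,t)\circ\g$ is subharmonic on $\g^{-1}(U_t)$. On the complement $H(\cdot,t)$ is constant, hence $\De=0$ holds trivially.

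The main technical obstacle is the joint-smooth extension of $H(\cdot,t)$ off $U_t$ while preserving (i)--(iii): one must match the boundary behavior of the $\exp(\mu_0 F_t)$-based formula to the plateau value $K$ without disturbing subharmonicity. This is handled by standard cutoff techniques once $K$ is taken large enough, and uniformity in $t\in\Th$ follows from compactness. Unlike Proposition 5.1, no $\f{1}{12}$ offset is required in (ii) because Proposition 3.2's bound $\De\log w\le-|B|^2$ has no quantitative lower cutoff on $w$.
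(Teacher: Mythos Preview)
Your proposal has a genuine structural gap. You are modeling the construction on Proposition~\ref{subhar1} (the single subharmonic function $\exp(\mu_0 F)\circ\g$), but the correct template is Proposition~\ref{subhar4}, whose proof the paper says carries over verbatim with Proposition~\ref{rank} replacing Proposition~\ref{subhar3} and the threshold $\f{3}{4}c$ replacing $\f{3}{4}c+\f{1}{12}$. The essential difficulty you skip is that the sublevel set $\{w(\cdot,P_t)\ge\f{3}{4}c\}$ is \emph{two-sided} in $\th-t$: it contains points with $\th<t$ as well as $\th>t$, and in particular it contains $P_t$ itself (where $\th=t$). Your branch $U_t=\{\th-t\in(0,\f{\pi}{2})\}$ therefore omits $P_t$ and the entire lower half of that sublevel set; extending $H$ by a large constant $K>1$ off $U_t$ then forces $H(S,t)>1$ at points with $\th(S)<t$ and $w(S,P_t)\ge\f{3}{4}c$, violating (ii), and leaves $P_t$ itself in the plateau region, violating (i). Moreover, a partition-of-unity glue between $\exp(\mu_0 F_t)$ and a constant introduces second derivatives of the cutoff into $\Hess\,H$, which are not controlled by $-\Hess\log w$ and generally destroy (iii); this is not a ``standard cutoff technique'' that one can dismiss.

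The paper's construction in the proof of Proposition~\ref{subhar4} is designed exactly to solve this two-sided problem: one defines $\td H(S,t)$ implicitly via $\Psi(S,t,u)=r\cos\bigl(\th-t\mp\varphi(u)\bigr)+u-\varphi(u)-1$ with a carefully chosen auxiliary $\varphi$, so that on the inner region $\Om_0=\{w(\cdot,P_t)\ge\text{threshold}\}$ one has $\td H=1-w(\cdot,P_t)$ (smooth across $\th=t$ and vanishing only at $P_t$), while outside $\Om_0$ the level sets $\td H=a$ coincide with level sets of $w(\cdot,P_{t\pm\varphi(a)})$ on the two halves $\th\gtrless t$ separately. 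Lemma~\ref{level0} is then applied on each half, and $H=\bigl(\exp(\mu_0\td H)-1\bigr)/\bigl(\exp((\text{threshold gap})\mu_0)-1\bigr)$. For Proposition~\ref{subhar5} you repeat this with the threshold $\f{3}{4}c$ (adjusting the constants in $\varphi$ accordingly) and, at the step corresponding to (\ref{La8})--(\ref{La9}), invoke Proposition~\ref{rank} to get $\De\log w(\cdot,P_\bullet)\le -|B|^2$ under only $w>0$, which is why $c$ may range over all of $(0,1)$ and no $\f{1}{12}$ offset appears. Your diagnosis of \emph{why} the offset disappears is correct; what is missing is the two-sided $\Psi$-construction that makes (i)--(iii) hold simultaneously.
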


\begin{proof}
We only prove Proposition \ref{subhar4}, since the proof of Proposition \ref{subhar5} is similar.

Let $\varphi$ be a smooth function on $[0,+\infty)$ satisfying
$$\left\{\begin{array}{ll}
\varphi(u)=0,& u\in [0,\f{11}{12}-\f{3}{4}c];\\
\varphi(u)=u-\f{5}{6}+\f{1}{2}c,& u\in [\f{3}{4}-\f{1}{4}c,+\infty);\\
0\leq \varphi'\leq 1. &
\end{array}\right.$$
Then one can define $\Psi$ on $\overline{\Bbb{W}}_{c,\Th}\times (-\pi,\pi)\times [0,+\infty)$ by
\begin{equation}
(S,t,u)\mapsto \left\{\begin{array}{cc}
r\cos(\th-t-\varphi(u))+u-\varphi(u)-1 & \text{if }\th(S)\geq t;\\
r\cos(\th-t+\varphi(u))+u-\varphi(u)-1 & \text{if }\th(S)<t.
\end{array}\right.
\end{equation}

Now we fix $t$ and denote $\Psi_t(S,u)=\Psi(S,t,u)$. For arbitrary $S\in \overline{\Bbb{W}}_{c,\Th}$, we put
$$I_S:=\Big\{u\in [0,+\infty):\max\{0,|\th-t|-\pi\}\leq \varphi(u)\leq |\th-t|\Big\},$$
then obviously $I_S$ is a closed interval, $I_S:=[m_S,M_S]$. If $\th(S)\geq t$, then
$$\p_2\Psi_t=1-\big(1+r\sin(\th-t-\varphi(u))\big)\varphi'(u)\geq 0$$
and the equality holds if and only if $\varphi(u)=|\th-t|$ or $|\th-t|-\pi$ and $\varphi'(u)=1$; which implies
$\varphi(u)=m_S$ or $M_S$. Thus
\begin{equation}\label{d1}
\p_2 \Psi_t(S,\cdot)>0 \qquad \text{on }(m_S,M_S).
\end{equation}
Similarly (\ref{d1}) holds when $\th(S)<t$.

When $|\th(S)-t|\leq \pi$, we have $m_S=0$ and hence
\begin{equation}
\Psi_t(S,m_S)=r\cos(\th-t)-1\leq 0.
\end{equation}
Otherwise $|\th(S)-t|>\pi$ and $m_S$ satisfies $\varphi(m_S)=|\th-t|-\pi$, thus
\begin{equation}\aligned
\Psi_t(S,m_S)&=-r+m_S-\varphi(m_S)-1\leq -r-1+\lim_{u\ra +\infty}\big(u-\varphi(u)\big)\\
             &=-r-1+\f{5}{6}-\f{1}{2}c\leq -\f{3}{2}c-\f{1}{6}<0.
             \endaligned
\end{equation}
Here the first inequality follows from $\big(u-\varphi(u)\big)'=1-\varphi'(u)\geq 0$.

Obviously $\varphi(M_S)=|\th-t|$. By the definition of $\varphi$, $\varphi$ cannot be identically zero on a neighborhood
of $M_S$. Thus $M_S\geq \f{11}{12}-\f{3}{4}c$, and
\begin{equation}\label{Max}\aligned
\Psi_t(S,M_S)&=r+M_S-\varphi(M_S)-1\geq r-1+\f{11}{12}-\f{3}{4}c\\
             &\geq \f{1}{4}c-\f{1}{12}>0.
             \endaligned
\end{equation}

By (\ref{d1})-(\ref{Max}), for each $S\in \overline{\Bbb{W}}_{c,\Th}$, there exists a unique $\td{H}=\td{H}(S,t)\in [m_S,M_S)$, such that
\begin{equation}\label{tdH1}
\Psi\big(S,t,\td{H}(S,t)\big)=\Psi_t\big(S,\td{H}(S,t)\big)=0.
\end{equation}

Denote
\begin{equation}
\Om:=\big\{(S,t)\in \overline{\Bbb{W}}_{c,\Th}\times (-\pi,\pi):\th(S)\neq t\big\},
\end{equation}
then $\Psi$ is obviously smooth on $\Om\times [0,+\infty)$. Hence the  implicit theorem implies $\td{H}$
is smooth on $\Om$. To show smoothness of $\td{H}$, it remains to prove that $\td{H}$ is smooth on a neighborhood
of $\Om^c=\big\{(S,t)\in \overline{\Bbb{W}}_{c,\Th}\times (-\pi,\pi):\th(S)=t\big\}$. Denote
\begin{equation}
\Om_0:=\big\{(S,t)\in \overline{\Bbb{W}}_{c,\Th}:w(S,P_t)\geq \f{1}{12}+\f{3}{4}c\big\},
\end{equation}
then $\Om^c\subset\subset \Om_0$ since $\th(S)=t$ implies $w(S,P_t)=r\geq c>\f{1}{12}+\f{3}{4}c$.
For all $(S,t)\in \Om_0$, $1-w(S,P_t)\leq 1-(\f{1}{12}+\f{3}{4}c)=\f{11}{12}-\f{3}{4}c$ and hence
$\varphi\big(1-w(S,P_t)\big)=0$. Noting that $w(S,P_t)>0$ implies $|\th-t|<\pi$, we have
$1-w(S,P_t)\in [m_S,M_S)$, and moreover
$$\Psi_t\big(S,1-w(S,P_t)\big)=r\cos(\th-t)+\big(1-w(S,P_t)\big)-1=0.$$
Therefore
\begin{equation}\label{tdH}
\td{H}(S,t)=1-w(S,P_t)\qquad\text{for all }(S,t)\in \Om_0
\end{equation}
and the smoothness of $\td{H}$  follows.

Now we put
\begin{equation}
U_t:=\Big\{S\in \overline{\Bbb{W}}_{c,\Th}: w(S,P_t)\geq \f{1}{12}+\f{3}{4}c\Big\},
\end{equation}
then obviously $0\leq \td{H}(S,t)\leq \f{11}{12}-\f{3}{4}c$ whenever $S\in U_t$. On the other hand,
for arbitrary $S\in \overline{\Bbb{W}}_{c,\Th}\backslash U_t$, one of the following two cases has to occur: (I) $|\th(S)-t|\geq \pi$;
(II) $|\th(S)-t|\leq \pi$ and $w(S,P_t)<\f{1}{12}+\f{3}{4}c$. If Case (I) holds, then $\td{H}(S,t)\geq m_S>\f{11}{12}-\f{3}{4}c$.
For the second case,
since
$$\Psi_t\big(s,\f{11}{12}-\f{3}{4}c\big)=r\cos(\th-t)+\f{11}{12}-\f{3}{4}c-1<0,$$
one can deduce that $\td{H}(S,t)>\f{11}{12}-\f{3}{4}c$ due to the
monotonicity of $\Psi_t$ with respect to the $u$
variable. Therefore
\begin{equation}\label{tdH3}
U_t=\Big\{S\in \overline{\Bbb{W}}_{c,\Th}:\td{H}(S,t)\leq \f{11}{12}-\f{3}{4}c\Big\}.
\end{equation}
Similarly
\begin{equation}
\text{int}(U_t)=\Big\{S\in \overline{\Bbb{W}}_{c,\Th}:\td{H}(S,t)<\f{11}{12}-\f{3}{4}c\Big\}.
\end{equation}

It is easily seen from (\ref{tdH}) and (\ref{tdH3}) that
\begin{equation}\label{HessH1}
\Hess \td{H}(\cdot,t)\geq \big(\f{1}{12}+\f{3}{4}c\big)\big(-\Hess \log w(\cdot,P_t)+d\log w(\cdot,P_t)\otimes d\log w(\cdot,P_t)\big)\qquad\text{on }U_t.
\end{equation}

For each $a\geq \f{11}{12}-\f{3}{4}c$ and $S\in \overline{\Bbb{W}}_{c,\Th}$ satisfying $\th(S)>t$, $\td{H}(S,t)=a$ if and only if
$$0=\Psi_t(S,a)=r\cos(\th-t-\varphi(a))+a-\varphi(a)-1;$$
i.e.
\begin{equation}
w(S,P_{t+\varphi(a)})=1+\varphi(a)-a
\end{equation}
with
$$\f{1}{12}+\f{3}{4}c\geq 1+\varphi(a)-a\geq 1+\lim_{u\ra +\infty}(\varphi(u)-u)=\f{1}{6}+\f{1}{2}c>\f{1}{3}.$$
Hence
\begin{equation}
N_{t,a}^+:=\{S\in \overline{\Bbb{W}}_{c,\Th}:\th(S)>t,\td{H}(S,t)=a\}
\end{equation}
overlaps the level set $\big\{S\in \grs{n}{m}:-\log w(S,P_{t+\varphi(a)})=-\log(1+\varphi(a)-a)\big\}$.

From (\ref{tdH1}) we have $\td{\Psi}_t:=\Psi_t\big(S,\td{H}(S,t)\big)\equiv 0$. Differentiating both sides implies
\begin{equation}\label{tdH2}
0=\n_\nu \td{\Psi}_t=\n_\nu \Psi_t(\cdot,a)+(\p_2 \Psi_t)\n_\nu \td{H}(\cdot,t)\qquad \text{on }N_{t,a}^+
\end{equation}
with $\nu$ the unit normal vector field on $N_{t,a}^+$. Noting that $\Psi_t(\cdot,a)=w(\cdot,P_{t+\varphi(a)})-(1+\varphi(a)-a)$
and $\p_2\Psi_t>0$, (\ref{tdH2}) tells us $\n\td{H}(\cdot,t)$ and $-\n\log w(\cdot,P_{t+\varphi(a)})$ are both nonzero
normal vector fields on $N_{t,a}^+$, pointing in the same direction. The compactness of $\bigcup_{a\geq \f{11}{12}-\f{3}{4}c}N_{t,a}^+$
and
$\Th$ implies the existence of a positive constant $C_8$ not depending on $a$ and $t$, such that
\begin{equation}
\f{|\n \td{H}(\cdot,t)|}{\big|\n \log w(\cdot,P_{t+\varphi(a)})\big|}\geq C_8\qquad \text{on }N_{t,a}^+.
\end{equation}
Hence applying Lemma \ref{level0} gives
\begin{equation}\label{HessH2}
\Hess\ \td{H}(\cdot,t)\geq -C_8\log w(\cdot,P_{t+\varphi(a)})-\ep g+\la d\td{H}(\cdot,t)\otimes d\td{H}(\cdot,t)\qquad \text{on }N_{t,a}^+
\end{equation}
with $g$ being the canonical metric on $\grs{n}{m}$, $\ep$ being a positive constant to be chosen and $\la$ denoting
a continuous nonpositive function depending on $\ep$.

Similarly,
\begin{equation}
N_{t,a}^-:=\{S\in \overline{\Bbb{W}}_{c,\Th}:\th(S)<t,\td{H}(S,t)=a\}
\end{equation}
overlaps the level set $\big\{S\in \grs{n}{m}:\log w(S,P_{t-\varphi(a)})=-\log(1+\varphi(a)-a)\big\}$ for each
$a\geq \f{11}{12}-\f{3}{4}c$. On it $\n \td{H}(\cdot,t)$ and $-\n\log w(\cdot,P_{t-\varphi(a)})$ are both nonzero
normal vector fields pointing in the same direction. Again using Lemma \ref{level} yields
\begin{equation}\label{HessH3}
\Hess \td{H}(\cdot,t)\geq -C_8\log w(\cdot,P_{t-\varphi(a)})-\ep g+\la d\td{H}(\cdot,t)\otimes d\td{H}(\cdot,t)\qquad \text{on }N_{t,a}^-.
\end{equation}

Let $M$ be a submanifold in $\R^{n+m}$ with parallel mean curvature whose Gauss image is contained in $\overline{\Bbb{W}}_{c,\Th}$, denote
$$\td{h}(\cdot,t)=\exp(\mu_0\td{H}(\cdot,t))\circ \g$$
where $\mu_0$ is a positive constant to be chosen. Once $\g(x)\in U_t$, combining (\ref{HessH1}) and (\ref{La4}) gives
\begin{equation}\label{La8}\aligned
&\mu_0^{-1}\td{h}(\cdot,t)^{-1}\td{h}(\cdot,t)\\
\geq&-\big(\f{1}{12}+\f{3}{4}c\big)\De\Big(\log w(\cdot,P_t)\circ\g\Big)+\big(\f{1}{12}+\f{3}{4}c+\mu_0\big)\Big|\n\big(\log w(\cdot,P_t)\circ \g\big)\Big|^2\\
\geq&\big(\f{1}{12}+\f{3}{4}c\big)C_1|B|^2+\mu_0\Big|\n\big(\log w(\cdot,P_t)\circ \g\big)\Big|^2
\endaligned
\end{equation}
at $x\in M$, where $C_1$ is a positive constant depending only on $c$.
If $\g(x)\in N_{t,a}^+$, based on (\ref{HessH2}) and (\ref{La4}), one can proceed as in the proof of Proposition \ref{subhar1} to obtain
\begin{equation}
\mu_0^{-1}\td{h}(\cdot,t)^{-1}\De\td{h}(\cdot,t)\geq (C_8 C_1-\ep)|B|^2+(C_8^2|\la+\mu_0|-C_8)\big|\n (\log w(\cdot,P_{t+\varphi(a)})\circ\g)\big|^2
\end{equation}
at $x$. Similarly, once $\g(x)\in N_{t,a}^-$,
\begin{equation}\label{La9}
\mu_0^{-1}\td{h}(\cdot,t)^{-1}\De\td{h}(\cdot,t)\geq (C_8 C_1-\ep)|B|^2+(C_8^2|\la+\mu_0|-C_8)\big|\n (\log w(\cdot,P_{t-\varphi(a)})\circ\g)\big|^2.
\end{equation}
Now we put
\begin{equation}
\ep:=C_8C_1,\qquad \mu_0:=\sup|\la|+C_8^{-1}
\end{equation}
(where $\sup|\la|<+\infty$,  since $\la$ is a continuous function on a compact set $(\overline{\Bbb{W}}_{c,\Th}\times \Th)\backslash \text{int}(\Om_0)$),
then from (\ref{La8})-(\ref{La9}) we see $\td{h}$ is a subharmonic function on $M$. Therefore
\begin{equation}
H(\cdot,t):=\f{\exp\big(\mu_0\td{H}(\cdot,t)\big)-1}{\exp\big((\f{11}{12}-\f{3}{4}c)\mu_0\big)-1}
\end{equation}
are required functions satisfying (i)-(iii).
\end{proof}

\begin{rem}
The auxiliary function $\varphi$ in the above proof can be easily obtained from the standard bump function. Choose
$\xi_1$ to be a nonnegative smooth function on $\R$, whose support is $(\f{11}{12}-\f{3}{4}c,\f{3}{4}-\f{1}{4}c)$,
then
$$\xi_2(u):=\f{\int_{\f{11}{12}-\f{3}{4}c}^u \xi_1}{\int_{\f{11}{12}-\f{3}{4}c}^{\f{3}{4}-\f{1}{4}c}\xi_1}$$
is a smooth function on $\R$ satisfying $0\leq \xi_2\leq 1$, $\xi_2(u)=0$ whenever $u\leq \f{11}{12}-\f{3}{4}c$ and $\xi_2(u)=1$
whenever $u\geq \f{3}{4}-\f{1}{4}c$.  $\a\in (0,+\infty)\mapsto \int_{\f{11}{12}-\f{3}{4}c}^{\f{3}{4}-\f{1}{4}c} \xi_2^\a$ is
a strictly decreasing function, which converges to $0$ as $u\ra +\infty$ and converges to $-\f{1}{6}+\f{1}{2}c$ as $u\ra 0$. There exists
 a unique $\be\in (0,+\infty)$, such that
$$\int_{\f{11}{12}-\f{3}{4}c}^{\f{3}{4}-\f{1}{4}c} \xi_2^\be=-\f{1}{12}+\f{1}{4}c.$$
Then
$$\varphi(u):=\int_0^u \xi_2^\be$$
is the required auxiliary function.

\end{rem}

Based on Lemma \ref{target} and Proposition
\ref{subhar4}-\ref{subhar5}, one can derive a Gauss image shrinking property for submanifolds
with parallel mean curvature as in \cite{j-x-y}\cite{j-x-y2}:

\begin{lem}\label{sh1}
Let $M^n$ be a submanifold in $\R^{n+m}$ ($n\geq 3$) with parallel mean curvature. Assume there is a distance function
$d$ on $M$, the metric ball $B_{4R_0}(x_0)$ given by $d$ satisfies DVP-condition for some $x_0\in M$ and $R_0\in (0,+\infty]$,
and the Gauss image of $B_{4R_0}(x_0)$ is contained in a compact set $K\subset \Bbb{W}_{1/3}$. Then there exists a constant $\de_1\in (0,1)$,
depending only on $K,K_1,K_2$, not depending on $x_0$ and $R_0$, such
that the image of $B_{\de_1R_0}(x_0)$ under the Gauss map is contained in
$\{S\in \grs{n}{m}:w(S,P)\geq \f{1}{12}+\f{3}{4}c\}$ for some $P\in \grs{n}{m}$, where $c:=\inf_K r$.
\end{lem}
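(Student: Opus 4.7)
The plan is to adapt the Gauss image shrinking argument from \cite{j-x-y, j-x-y2}, combining three ingredients: (i) the diffeomorphism $\phi:\Bbb{W}_0\to (-\pi,\pi)\times\R^{nm-1}$ of Lemma \ref{target}, which realizes $\Bbb{W}_0$ as a convex domain in Euclidean space; (ii) the subharmonic auxiliary functions $H(\cdot,t)$ built in Proposition \ref{subhar4}; and (iii) the pointwise mean-value estimate (\ref{es1}). I view the Gauss map as a harmonic map $u:=\phi\circ\g$ of $B_{4R_0}(x_0)$ into a convex open neighborhood $V\subset\R^{nm}$ of the compact set $\phi(K)$, endowed with the pullback of the Grassmannian metric, so that $|du|^2=|d\g|^2=|B|^2$ and $V$ satisfies the uniform eigenvalue bounds demanded by (\ref{es1}).

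The crux is a good choice of the reference plane $P$. Let $\bar u_R=(\bar\th,\bar y)\in V$ be the average of $u$ over $B_R(x_0)$. Since $|\phi_2(u(x))|=1/r(\g(x))-1\le 1/c-1$ on $K$, Jensen's inequality gives $|\bar y|\le 1/c-1$, so $\bar S:=\phi^{-1}(\bar u_R)$ satisfies $r(\bar S)=(1+|\bar y|)^{-1}\ge c$ and $\th(\bar S)=\bar\th$. Set $t:=\bar\th$ and $P:=P_t$; formula (\ref{w3}) then gives
\begin{equation*}
w(\bar S,P_t)=r(\bar S)\cos\big(\th(\bar S)-t\big)=r(\bar S)\ge c>\tfrac{3}{4}c+\tfrac{1}{12},
\end{equation*}
the final inequality being equivalent to $c>\tfrac{1}{3}$, which holds because $K\subset\Bbb{W}_{1/3}$ is compact. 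The explicit formula for $H$ in Proposition \ref{subhar4} then yields a uniform gap $H(\bar S,t)\le 1-\de_0$ for some $\de_0=\de_0(K)>0$. Because $\phi_1(K)\subset(-\pi,\pi)$ is a compact interval, I fix once and for all a compact $\Th\subset(-\pi,\pi)$ with $\phi_1(K)\subset\Th$, so that $t\in\Th$ independently of $R$ and $x_0$; Proposition \ref{subhar4}(iii) now makes $H(\cdot,t)\circ\g$ subharmonic on $B_{4R_0}(x_0)$.

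Applying (\ref{es1}) to the composition $\big(H(\cdot,t)\circ\phi^{-1}\big)\circ u$, the liminf term drops by subharmonicity, and for every $x\in B_{R/4}(x_0)$ we obtain
\begin{equation*}
H(\g(x),t)\le H(\bar S,t)+C_7\,L\,\Big(\f{R^2}{V(x_0,R)}\int_{B_R(x_0)}|B|^2*1\Big)^{\f{1}{2}},
\end{equation*}
where $L:=\sup_{s\in\Th}\sup_{\ol{\Bbb{W}}_{c,\Th}}|\n H(\cdot,s)|$ depends only on $K$. Lemma \ref{cur} supplies, for any prescribed $\ep>0$, a radius $R\in[e^{-C_6/\ep}R_0,R_0]$ making that integral no larger than $\ep$. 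Fixing $\ep$ so small that $C_7 L\sqrt{\ep}<\de_0$ forces $H(\g(x),t)<1$ for all $x\in B_{R/4}(x_0)$, equivalently $w(\g(x),P_t)>\tfrac{3}{4}c+\tfrac{1}{12}$ by Proposition \ref{subhar4}(ii); taking $\de_1:=\tfrac{1}{4}e^{-C_6/\ep}$ completes the proof. The main technical obstacle is ensuring that $\Th$, the constants $\mu_0,\de_0,L$, and the eigenvalue bounds on $V$ all remain uniform in $x_0$, $R_0$, and the submanifold $M$; once $\Th$ is fixed as a compact neighborhood of $\phi_1(K)$ in $(-\pi,\pi)$, this reduces to a careful bookkeeping exercise using the compactness of $\ol{\Bbb{W}}_{c,\Th}$.
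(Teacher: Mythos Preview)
Your proof is correct and follows essentially the same route as the paper's: realize $\overline{\Bbb W}_{c,\Th}$ as a convex Euclidean domain via Lemma~\ref{target}, apply the pointwise estimate (\ref{es1}) to the harmonic map $\phi\circ\g$ with test function $H(\cdot,t)$ from Proposition~\ref{subhar4}, choose $t$ to be the $\th$-coordinate of the average, use the uniform gap $H(\bar S,\th(\bar S))\le 1-\de_0$ on $\overline{\Bbb W}_{c,\Th}$, and make the energy term small with Lemma~\ref{cur}. Your verification that $r(\bar S)\ge c$ via $|\bar y|\le 1/c-1$ is a bit more explicit than the paper, which simply relies on the convexity of the coordinate image $\Th\times\overline{\Bbb D}^{\,nm-1}_{1/c-1}$; one small inaccuracy is your claim that $\phi_1(K)$ is a compact \emph{interval}---it need not be connected, but taking $\Th$ to be its convex hull (a compact subinterval of $(-\pi,\pi)$) fixes this and is exactly what you need so that $\bar\th\in\Th$.
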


\begin{lem}\label{sh2}
Let $M^n$ be
a minimal submanifold in $\R^{n+m}$ ($n\geq 3$)  with $rank(\g)\leq 2$
($\g$ denotes the Gauss map). Assume there is a distance function
$d$ on $M$, the metric ball $B_{4R_0}(x_0)$ given by $d$ satisfies the
DVP-condition for some $x_0\in M$ and $R_0\in (0,+\infty]$,
and the Gauss image of $B_{4R_0}(x_0)$ is contained in a compact set $K\subset \Bbb{W}_{0}$. Then there exists a constant $\de_1\in (0,1)$,
depending only on $K,K_1,K_2$, not depending on $x_0$ and $R_0$, such
that the image of $B_{\de_1R_0}(x_0)$ under the Gauss map is contained in
$\{S\in \grs{n}{m}:w(S,P)\geq \f{3}{4}c\}$ for some $P\in \grs{n}{m}$, where $c:=\inf_K r$.
\end{lem}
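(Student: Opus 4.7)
The plan is to mimic Lemma~\ref{sh1} line by line, with two substitutions: replace Proposition~\ref{subhar4} by Proposition~\ref{subhar5}, and replace the curvature estimate of Lemma~\ref{cur} in its generic form by the relaxed form valid for minimal submanifolds with $\mathrm{rank}(\g)\le 2$ (which allows the Gauss image to lie in a compact $K\subset\Bbb{W}_0$, rather than in $\Bbb{W}_{1/3}$). The target inequality $H(\g(x),t_0)<1$ will, by property (ii) of Proposition~\ref{subhar5}, translate directly into $w(\g(x),P_{t_0})>\f{3}{4}c$, so one only has to produce a single $t_0$ that works uniformly on a ball of definite relative size.

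First I transfer the problem to Euclidean space via the diffeomorphism $\phi=(\phi_1,\phi_2):\Bbb{W}_0\to(-\pi,\pi)\times\R^{nm-1}$ of Lemma~\ref{target}, equipping its image with the push\-forward of the canonical Grassmann metric. Since $\g(B_{4R_0}(x_0))$ is contained in the compact set $K\subset\Bbb{W}_0$, its image $\phi\circ\g$ lands in a relatively compact subset of $\phi(\Bbb{W}_0)$, and on any such subset the metric components have eigenvalues bounded between constants $K_3,K_4>0$ determined by $K$; moreover $\phi\circ\g$ is a harmonic map into a convex neighborhood $V$ of this subset (here one uses that $\phi$ is a global chart of $\Bbb{W}_0$ and that $\g$ is harmonic by Ruh--Vilms together with $\n H=0$). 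This is exactly the setup needed for the pointwise estimate (\ref{es1}).

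Next, choose $t_0\in(-\pi,\pi)$ as the $\phi_1$-component of the average $\overline{\phi\circ\g}_{R}$ over $B_R(x_0)$, with $R$ to be fixed later, and set $\Th\subset(-\pi,\pi)$ to be a compact interval containing all such candidate $t_0$'s (this is possible because $\phi_1=\theta$ is bounded on the compact Gauss image). Apply Proposition~\ref{subhar5} to produce the family of smooth nonnegative functions $H(\cdot,t)$ on $\overline{\Bbb{W}}_{c,\Th}$ with the crucial subharmonicity (iii): $H(\cdot,t_0)\circ\g\ge 0$ is subharmonic on $M$ under the present rank and minimality hypotheses. Plugging $u=\phi\circ\g$ and the function $H(\cdot,t_0)\circ\phi^{-1}$ into (\ref{es1}), the last term $-\liminf_{\rho\to 0^+}\int G^\rho\eta\De(H\circ\g)*1$ is nonpositive by (iii) and may be discarded, leaving
\begin{equation*}
H(\g(x),t_0)\le H(\overline{\phi\circ\g}_R,t_0)+C_7\sup_{V}|\n H|\cdot\Bigl(\tfrac{R^2}{V(x_0,R)}\int_{B_R(x_0)}|B|^2*1\Bigr)^{1/2}
\end{equation*}
for all $x\in B_{R/4}(x_0)$. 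By the choice of $t_0$ and the quantitative form of (ii), the first term on the right is bounded by some $\eta_0<1$ depending only on $K$; by the (rank $\le 2$) version of Lemma~\ref{cur}, the second term can be made smaller than $(1-\eta_0)/2$ by choosing $R=\de_1 R_0$ with $\de_1\in(0,1)$ depending only on $K,K_1,K_2$. Consequently $H(\g(x),t_0)<1$ throughout $B_{\de_1 R_0/4}(x_0)$, which by (ii) yields $w(\g(x),P_{t_0})>\f{3}{4}c$; setting $P=P_{t_0}$ completes the argument (after absorbing the factor $1/4$ into $\de_1$).

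The main obstacle is the first step: one must check that $\phi\circ\g$ really qualifies as a harmonic map into a \emph{convex} Euclidean domain in the sense required by the pointwise estimate, and that the metric eigenvalue bounds $K_3,K_4$ depend only on $K$. Lemma~\ref{target} supplies the global chart with nondegeneracy (\ref{th2}), so convexity can be arranged by passing to a convex neighborhood of $\phi(K)$ in $\R^{nm}$, and compactness of $K$ gives the uniform metric bounds; the rest is routine bookkeeping paralleling \cite{j-x-y2}.
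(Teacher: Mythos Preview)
Your proposal is correct and follows the same route as the paper. The paper does not give a separate argument for Lemma~\ref{sh2}; it simply refers back to the proof of Lemma~\ref{sh1}, with Proposition~\ref{subhar5} substituted for Proposition~\ref{subhar4} and the rank-$\le 2$ clause of Lemma~\ref{cur} invoked in place of the general one---precisely the two substitutions you identify. One small wording point: in the curvature-estimate step you should not ``choose $R=\de_1 R_0$'' but rather appeal to the existence of some $R\in[\exp(-C_6\ep^{-1})R_0,R_0]$ with the required energy bound, exactly as in the proof of Lemma~\ref{sh1}; this is harmless for your conclusion once you absorb the resulting factor into $\de_1$.
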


\begin{proof}
As above, we only prove Lemma \ref{sh1}.

Since $K\subset \Bbb{W}_{1/3}$ is compact, $c:=\inf_K r>\f{1}{3}$ and $\Th:=\big\{t\in (-\pi,\pi):t=\th(S)\text{ for some }S\in K\big\}$
is a compact subset of $(-\pi,\pi)$. Obviously $K\subset \overline{\Bbb{W}}_{c,\Th}$.

By Lemma \ref{target}, $\overline{\Bbb{W}}_{c,\Th}$ is diffeomorphic to $\Th\times \overline{\Bbb{D}}_{c^{-1}-1}^{nm-1}$. Thus $\overline{\Bbb{W}}_{c,\Th}$
can be seen as a bounded convex domain in $\R^{nm}$ equipped with the
induced metric. The eigenvalues of the metric matrices are bounded from
below by $K_3$ and from above by $K_4$, where $K_3,K_4$ are positive constants depending only on $K$.

The Ruh-Vilms Theorem \cite{r-v} implies that $\g$ is a harmonic function. Putting $u=\g$ in (\ref{es1}) gives $|du|^2=|B|^2$.
Let $\{H(\cdot,t):t\in \Th\}$ be a
family of smooth functions on $\overline{\Bbb{W}}_{c,\Th}$ as constructed in Proposition \ref{subhar4}. Choosing one of the functions
as a test function in (\ref{es1}) yields
\begin{equation}\label{es2}
H(\g(x),t)\leq H(\bar{\g}_R,t)+C_7C_9\Big(\f{R^2}{V(x_0,R)}\int_{B_{R}(x_0)}|B|^2*1\Big)^{\f{1}{2}}.
\end{equation}
for arbitrary $R\leq R_0$ and all $x\in B_{\f{R}{4}}$. Here
\begin{equation}
C_9:=\sup_\Th\sup_{\overline{\Bbb{W}}_{c,\Th}}\big|\n H(\cdot,t)\big|
\end{equation}
and the last term in (\ref{es1}) has been thrown out, since $H(\cdot,t)\circ \g$ is a subharmonic function.

For arbitrary $S\in \overline{\Bbb{W}}_{c,\Th}$, $w(S,P_{\th(S)})=r(S)\geq c>\f{3}{4}c+\f{1}{12}$, which implies
$H(S,\th(S))<1$ by Proposition \ref{subhar4}(ii). Since $\overline{\Bbb{W}}_{c,\Th}$ is compact, there exists
a positive constant $\ep_1$, such that
\begin{equation}\label{es3}
H(S,\th(S))\leq 1-\ep_1\qquad \text{for all }S\in \overline{\Bbb{W}}_{c,\Th}.
\end{equation}

By Lemma \ref{cur} (curvature estimates), if we denote
\begin{equation}
\de_1:=\f{1}{4}\exp(-C_6C_7^{2}C_9^{2}\ep_1^{-2})
\end{equation}
then there exists $R\in [4\de_1 R_0,R_0]$, such that
\begin{equation}\label{es4}
\f{R^2}{V(x_0,R)}\int_{B_R(x_0)}|B|^2*1\leq C_7^{-2}C_9^{-2}\ep_1^2.
\end{equation}

Letting $t=\th(\bar{\g}_R)$ and substituting (\ref{es3}) and (\ref{es4}) into (\ref{es2}) gives
\begin{equation}
H\big(\g(x),\th(\bar{\g}_R)\big)\leq H(\bar{\g}_R,\th(\bar{\g}_R))+C_7C_9(C_7^{-2}C_9^{-2}\ep_1^2)^{\f{1}{2}}\leq 1-\ep_1+\ep_1=1
\end{equation}
for all $x\in B_{\f{R}{4}}(x_0)$. Hence by Proposition \ref{subhar4}(ii),
$$\g\big(B_{\de_1R_0}(x_0)\big)\subset \g\big(B_{\f{R}{4}}(x_0)\big)\subset \big\{S\in \grs{n}{m}:w(S,P_{\th(\bar{\g}_R)})\geq \f{1}{12}+\f{3}{4}c\big\}.$$

\end{proof}

Given $\g\big(B_{\de_1R_0}(x_0)\big)\subset \big\{S\in \grs{n}{m}:w(S,P)\geq \f{1}{12}+\f{3}{4}c>\f{1}{3}\big\}$, one can start an iteration
as in \cite{j-x-y2} to get a-priori estimates for the Gauss image:
\begin{lem}
Let $M^n$ be a submanifold in $\R^{n+m}$ ($n\geq 3$) with parallel mean curvature. Assume there is a distance function
$d$ on $M$, the metric ball $B_{4R_0}(x_0)$ defined by $d$ satisfies
the DVP-condition for some $x_0\in M$ and $R_0\in (0,+\infty]$,
and the Gauss image of $B_{4R_0}(x_0)$ is contained in a compact set $K\subset \Bbb{W}_{1/3}$. Then for arbitrary $\ep>0$, there exists
a constant $\de_2\in (0,1)$, depending only on $K,K_1,K_2,\ep$, not depending on $x_0$ and $R_0$, such that
\begin{equation}
w(\g(x),\g(x_0))\geq 1-\ep\qquad \text{on }B_{\de_2 R_0}(x_0).
\end{equation}
In particular, if $M$ is a minimal submanifold with $rank(\g)\leq 2$,
then the condition on the Gauss image
can be relaxed to $\g(B_{4R_0}(x_0))\subset K\subset \Bbb{W}_0$.
\end{lem}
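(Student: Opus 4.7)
The strategy is to combine Lemma \ref{sh1}, which places the Gauss image into a matrix coordinate chart, with an iteration based on the pointwise estimate (Proposition 5.1), the curvature estimate (Lemma \ref{cur}), and a suitable choice of auxiliary subharmonic function.

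First I would apply Lemma \ref{sh1} once to obtain
$$\g\big(B_{\de_1 R_0}(x_0)\big) \subset K^{(1)} := \big\{S \in \grs{n}{m} : w(S, P_1) \geq c_1\big\}$$
for some $P_1 \in \grs{n}{m}$ with $c_1 = \f{1}{12} + \f{3}{4} c > \f{1}{3}$. This places the Gauss image inside the matrix coordinate chart centered at $P_1$, where the $v$-function $v(\cdot, P_1) = w(\cdot, P_1)^{-1}$ is bounded by $c_1^{-1} < 3$ and its pullback by $\g$ is strongly subharmonic by Proposition \ref{subhar3}. Identifying $K^{(1)}$ via matrix coordinates with a bounded subset of $\R^{nm}$, the induced metric has eigenvalues between constants $K_3, K_4$ depending only on $c_1$, so the hypotheses of Proposition 5.1 are verified on $K^{(1)}$.

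Next I would iterate inductively. Supposing at step $k$ that $\g(B_{r_k R_0}(x_0)) \subset K^{(k)} := \{S : w(S, P_k) \geq c_k\}$ with $c_k > \f{1}{3}$, I apply Proposition 5.1 with an auxiliary function $H^{(k)}(\cdot, t)$ built as in Proposition \ref{subhar4}: its pullback $H^{(k)}(\cdot, t) \circ \g$ is subharmonic (via the Hessian bound from Proposition \ref{Hess1} combined with Proposition \ref{subhar3}), and the sublevel set $\{H^{(k)} \leq 1\}$ is contained in $\{S : w(S, P_{k+1}) \geq c_{k+1}\}$ for a new center $P_{k+1}$ and a new value $c_{k+1}$. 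Choosing the parameter $t$ so that $H^{(k)}(\bar\g_{R^{(k)}}, t) = 0$ and the radius $R^{(k)}$ via Lemma \ref{cur} so that the pointwise estimate error $C_7 \sup|\n H^{(k)}| \big(R^{(k)2} V^{-1} \int |B|^2\big)^{1/2}$ is as small as desired, one obtains $\g(B_{r_{k+1} R_0}(x_0)) \subset K^{(k+1)}$ with $r_{k+1} = \delta^{(k)} r_k$ for some $\delta^{(k)} \in (0, 1)$ depending only on $c_k, K_1, K_2$. Coupling the shrinking radii $r_k \to 0$ with the curvature estimate, which forces the scaled Dirichlet energy of $\g$ on $B_{r_k R_0}(x_0)$ below any prescribed threshold, makes the sequence of centers $(P_k)$ Cauchy in $\grs{n}{m}$ and collapses the Gauss image on $B_{r_N R_0}(x_0)$ into an arbitrarily small neighborhood of $\g(x_0)$ for $N$ sufficiently large.

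Given $\ep > 0$, choose $N = N(\ep, K, K_1, K_2)$ so large that $w(\g(x), P_N) \geq (1 - \ep/2)^{1/n}$ for all $x \in B_{r_N R_0}(x_0)$; each Jordan angle between $\g(x)$ and $P_N$ is then at most $\arccos\bigl((1 - \ep/2)^{1/n}\bigr)$, and so $d(\g(x), P_N) \leq \sqrt{n}\,\arccos\bigl((1 - \ep/2)^{1/n}\bigr)$, and similarly for $\g(x_0)$. The triangle inequality for Grassmannian distance together with $w(Q_1, Q_2) \geq \cos^n d(Q_1, Q_2)$ (valid for $d < \pi/2$, since each Jordan angle is bounded by the distance) yields $w(\g(x), \g(x_0)) \geq 1 - \ep$, establishing the claim with $\de_2 := r_N$. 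For minimal submanifolds with $\mathrm{rank}(\g) \leq 2$, the same argument goes through with Lemma \ref{sh2}, Proposition \ref{subhar5}, and $\Bbb{W}_0$ in place of Lemma \ref{sh1}, Proposition \ref{subhar4}, and $\Bbb{W}_{1/3}$.

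The main obstacle is the iteration in the second step: the natural recursion $c_{k+1} = \f{1}{12} + \f{3}{4} c_k$ coming from Lemma \ref{sh1} has fixed point $\f{1}{3}$ rather than $1$, so the sets $K^{(k)}$ do not shrink to a point by their nominal ``radius'' $c_k$ alone. The resolution is that the effective collapse of the Gauss image comes not from $c_k$ but from coupling the iteration with Lemma \ref{cur}: as the radii contract, the scaled Dirichlet energy on successive balls becomes arbitrarily small, which in turn drives the moving centers $P_k$ into a Cauchy sequence and pushes the oscillation of the Gauss map to zero. Making this coupling quantitative, while tracking the dependence of $\de_2$ on $\ep$, is the technical heart of the argument, mirroring the iteration scheme in \cite{j-x-y2}.
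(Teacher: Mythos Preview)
Your high-level architecture matches the paper exactly: apply Lemma~\ref{sh1} once to place the Gauss image inside a single matrix chart $\{w(\cdot,P_1)\ge c_1\}$ with $c_1>\tfrac13$, and then run the iteration of \cite{j-x-y2} inside that chart. The paper itself gives no further detail for this lemma, writing only ``one can start an iteration as in \cite{j-x-y2}''.

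There is, however, a concrete error in your attempted description of that iteration. You propose to choose $t$ so that $H^{(k)}(\bar\g_{R^{(k)}},t)=0$, but the family $\{H(\cdot,t)\}$ of Proposition~\ref{subhar4} is indexed by a \emph{one}-dimensional parameter, and by property~(i) one has $H(S,t)=0$ only when $S=P_t$, a point on the fixed geodesic through the chosen $S$-orthogonal pair. The barycenter $\bar\g_R$ has no reason to lie on this curve, so that choice of $t$ is unavailable. The best one can do with Proposition~\ref{subhar4} is $t=\th(\bar\g_R)$, which gives only $H(\bar\g_R,t)\le 1-\ep_1$ as in the proof of Lemma~\ref{sh1}; this produces exactly the non-contracting recursion $c_{k+1}=\tfrac1{12}+\tfrac34 c_k$ you correctly flag. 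The ``coupling'' you sketch does not rescue it: with $H(\bar\g_R,t)$ bounded away from $0$, driving the energy term in (\ref{es1}) to zero still leaves $H(\g(x),t)$ bounded away from $0$, so the centers $P_k$ need not form a Cauchy sequence.

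What makes the iteration in \cite{j-x-y2} contract is a \emph{richer} family of auxiliary functions, indexed by all points $Q$ of the matrix chart rather than only by the curve $\{P_t\}$. Once the image sits in $\{v(\cdot,P_1)\le c_1^{-1}<3\}$, for each $Q$ in that chart one has a function with $H(\cdot,Q)\circ\g$ subharmonic and $H(S,Q)=0$ iff $S=Q$; then one may legitimately set $Q=\bar\g_R$, and (\ref{es1}) gives $H(\g(x),\bar\g_R)\le C\big(R^2V^{-1}\!\int|B|^2\big)^{1/2}$, which Lemma~\ref{cur} makes as small as desired. This is a genuine contraction at each step, and finitely many iterations (their number depending on $\ep$) bring $w(\g(x),P_N)$ as close to $1$ as required; your concluding triangle-inequality passage from $P_N$ to $\g(x_0)$ is then fine.
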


Letting $R_0\ra +\infty$ we arrive at a Bernstein type theorem.

\begin{thm}\label{Ber2}
Let $M^n$ be a submanifold in $\R^{n+m}$ ($n\geq 3$) with parallel mean curvature. Assume there is a distance function $d$
on $M$, such that $M$ satisfies the DVP-condition and the diameter of
$M$  with respect to $d$ is infinite; and there exist $P,Q\in \grs{n}{m}$
that are S-orthogonal to each other, such that $\big(w(\g(x),P),w(\g(x),Q)\big)$ is contained in a compact subset $K$ of
$\overline{\Bbb{D}}\backslash \big(\overline{\Bbb{D}}_{1/3}\cup \{(a,0):a\leq 0\}\big)$ for all $x\in M$. Then $M$ has to be an affine linear subspace.\newline
In particular, if $M$ is a minimal submanifold with $rank(\g)\leq 2$,
then the assumptions on the Gauss image
can be replaced by $\big(w(\g(x),P),w(\g(x),Q)\big)\in K\subset \overline{\Bbb{D}}\backslash \{(a,0):a\leq 0\}$.
\end{thm}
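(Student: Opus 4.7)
The overall plan is to push $R_0\to+\infty$ in the iteration lemma that immediately precedes the theorem; everything is arranged so that the constant $\delta_2$ produced there does not depend on $R_0$, which is exactly what makes the limit work.

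First I would unpack the hypothesis. Since $P,Q\in\grs{n}{m}$ are S-orthogonal, I take them as the base points $P_0$ and $P_{\pi/2}$ of the closed geodesic used to build $\mathscr{S}$ in \S\ref{s1}, so that $(w(\cdot,P),w(\cdot,Q))=\mathscr{S}$. The assumption that $\mathscr{S}\circ\g$ takes values in a compact subset of $\overline{\Bbb{D}}\setminus(\overline{\Bbb{D}}_{1/3}\cup\{(a,0):a\le 0\})$, pulled back along the continuous map $\mathscr{S}$ on the compact Grassmannian, produces a single compact set $K\subset\Bbb{W}_{1/3}$ containing $\g(M)$. Together with the global DVP-condition on $M$, this verifies the hypotheses of the preceding iteration lemma on $B_{4R_0}(x_0)$ for every $R_0>0$, with the same $K$, $K_1$, $K_2$.

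Next I would apply that lemma: for each prescribed $\varepsilon>0$ it furnishes $\delta_2=\delta_2(K,K_1,K_2,\varepsilon)\in(0,1)$, independent of $R_0$, with
\begin{equation*}
w\big(\g(x),\g(x_0)\big)\ge 1-\varepsilon\qquad \text{on } B_{\delta_2 R_0}(x_0).
\end{equation*}
Because $(M,d)$ has infinite diameter, any fixed $x\in M$ lies in $B_{\delta_2 R_0}(x_0)$ as soon as $R_0>d(x,x_0)/\delta_2$; letting $R_0\to+\infty$ gives the inequality at every $x\in M$, and then $\varepsilon\to 0^+$ yields $w(\g(x),\g(x_0))\equiv 1$ on $M$.

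Finally, $w(P',P'')=1$ between two points of $\grs{n}{m}$ forces all Jordan angles to vanish, so $P'=P''$; hence $\g$ is constant on $M$. A constant Gauss map means that every tangent $n$-plane coincides with $\g(x_0)$, so $M$ lies in an affine translate of this $n$-plane, and the completeness packaged in the infinite-diameter DVP hypothesis forces $M$ to be that whole affine $n$-plane. For the second assertion I would replace $\Bbb{W}_{1/3}$ by $\Bbb{W}_0$ throughout and invoke Lemma~\ref{sh2} and Proposition~\ref{subhar5} in place of their $\Bbb{W}_{1/3}$-counterparts; no other change is needed. The only delicate point in the whole argument is the very first step, namely extracting from the hypothesis a single compact $K\subset\Bbb{W}_{1/3}$ that works uniformly in $R_0$ so that $\delta_2$ does not degenerate; once this is in place, the theorem is a clean passage to the limit.
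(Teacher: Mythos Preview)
Your proposal is correct and is exactly the argument the paper has in mind: the paper's entire proof is the single sentence ``Letting $R_0\ra +\infty$ we arrive at a Bernstein type theorem,'' and you have spelled out that passage to the limit, including the one point that actually needs saying, namely that $\mathscr{S}^{-1}$ of the compact set in the disk is a compact subset of $\Bbb{W}_{1/3}$ (since $\grs{n}{m}$ is compact and $\mathscr{S}$ is continuous), so the constants $K_1,K_2$ and hence $\de_2$ are uniform in $R_0$.
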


The above general Theorem can be applied to graphic submanifolds as follows.

\begin{thm}\label{Ber3}
Let $z^\a=f^\a(x^1,\cdots,x^n),\ \a=1,\cdots,m$, be smooth functions
defined everywhere in $\R^n$ ($n\geq 3,m\geq 2$), such that their graph $M=(x,f(x))$ is a
submanifold with parallel mean curvature in $\R^{n+m}$. Suppose that
there exist $\be_0<+\infty$ and $\be_1<3$, such that
\begin{equation}
\De_f:=\Big[\det\Big(\de_{ij}+\sum_\a \f{\p f^\a}{\p x^i}\f{\p f^\a}{\p x^j}\Big)\Big]^{\f{1}{2}}\leq \be_0.\label{be2}
\end{equation}
and
\begin{equation}\label{slope}
\De_f\leq \be_1\Big(1+\big(\f{\p f^1}{\p x^1}\big)^2\Big)^{\f{1}{2}}.
\end{equation}
Then $f^1,\cdots,f^m$ have to be affine linear (representing an affine $n$-plane).
\end{thm}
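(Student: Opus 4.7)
The plan is to reduce this graphical statement to Theorem \ref{Ber2}. Concretely, I would select a pair $P, Q \in \grs{n}{m}$ which are $S$-orthogonal, compute the two components of the $S$-map along the Gauss map in terms of $f$, verify that $M$ satisfies the DVP-condition and has infinite diameter, and confirm that the Gauss image sits in a compact subset of the region $\overline{\Bbb{D}}\setminus(\overline{\Bbb{D}}_{1/3}\cup\{(a,0):a\le 0\})$, so that Theorem \ref{Ber2} applies directly.

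For the choice of $P$ and $Q$, let $\{\ep_i,\ep_{n+\a}\}$ be the standard orthonormal basis of $\R^{n+m}$, and set $P=\ep_1\w\cdots\w\ep_n$ and $Q=\ep_{n+1}\w\ep_2\w\cdots\w\ep_n$. These are $S$-orthogonal by Proposition \ref{orthogonal}(e). The tangent space of $M$ at $(x,f(x))$ is spanned by $v_i=\ep_i+\sum_\a(\p f^\a/\p x^i)\,\ep_{n+\a}$, and $\|v_1\w\cdots\w v_n\|=\De_f$. A direct expansion of the wedge product (only $\ep_{n+1}$ from the $v_1$-factor and $\ep_i$ from each $v_i$, $i\ge 2$, can produce the target simple $m$-vector of $Q$) gives
\begin{equation}
w(\g(x),P)=\f{1}{\De_f},\qquad w(\g(x),Q)=\f{1}{\De_f}\cdot\f{\p f^1}{\p x^1},
\end{equation}
so that
\begin{equation}
w(\g(x),P)^2+w(\g(x),Q)^2=\f{1+(\p f^1/\p x^1)^2}{\De_f^2}.
\end{equation}
The bound $\De_f\le\be_0$ translates to $w(\g(x),P)\ge\be_0^{-1}>0$, keeping the image off the deleted radius, while the slope hypothesis (\ref{slope}) gives $w(\g,P)^2+w(\g,Q)^2\ge\be_1^{-2}>1/9$, keeping it outside $\overline{\Bbb{D}}_{1/3}$. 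Hence the image lies in the compact set $\{(u,v)\in\overline{\Bbb{D}}:u\ge\be_0^{-1},\ u^2+v^2\ge\be_1^{-2}\}$, which is contained in $\overline{\Bbb{D}}\setminus(\overline{\Bbb{D}}_{1/3}\cup\{(a,0):a\le 0\})$.

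For the DVP and diameter conditions, I would use the global parametrization of $M$ by $\R^n$ sending $x$ to $(x,f(x))$. The induced metric $g_{ij}=\de_{ij}+\sum_\a\p_if^\a\p_jf^\a$ has eigenvalues of the form $1+\si_k^2$, where $\si_k$ are the singular values of $df$; every eigenvalue is at least $1$, and since their product equals $\De_f^2\le\be_0^2$ while the remaining factors are $\ge 1$, each eigenvalue is at most $\be_0^2$. Remark \ref{DVP} with $d(x,y)=|x-y|$ then supplies a DVP-condition whose constants depend only on $\be_0$, and $(M,d)$ clearly has infinite diameter. An application of Theorem \ref{Ber2} concludes that $M$ must be an affine $n$-plane.

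I do not expect a serious obstacle: all the heavy analytic machinery (the geometry of $\Bbb{W}_{1/3}$, the construction of strongly subharmonic functions, Moser iteration, the Gauss-image shrinking) is already packaged in Theorem \ref{Ber2}. The one conceptual point worth highlighting is that this particular $S$-orthogonal pair $(P,Q)$ is chosen so that the two $w$-functions encode exactly the invariants $\De_f$ and $\p f^1/\p x^1$; correspondingly, the sharp threshold $\be_1<3$ is precisely what forces the $S$-image out of the disc of radius $1/3$, which is why $3$ is the natural constant accessible by this method.
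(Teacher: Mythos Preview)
Your proposal is correct and follows essentially the same route as the paper's own proof: the same $S$-orthogonal pair $P=\ep_1\w\cdots\w\ep_n$, $Q=\ep_{n+1}\w\ep_2\w\cdots\w\ep_n$, the same computation $w(\g,P)=\De_f^{-1}$ and $w(\g,Q)=(\p f^1/\p x^1)\De_f^{-1}$, the same eigenvalue bounds $1\le g_{ij}\le \be_0^2$ feeding into Remark \ref{DVP} for the DVP-condition, and then a direct appeal to Theorem \ref{Ber2}. The only cosmetic difference is that the paper states the equivalence of (\ref{slope}) with the inequality $w(\g,P)^2+w(\g,Q)^2\ge\be_1^{-2}$ in one line rather than phrasing it as landing in an explicit compact set, but the content is identical.
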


\begin{proof}
$F: \R^n\ra M$ defined by
$$x\mapsto (x,f(x))$$
is obviously a diffeomorphism. Thus $M$ can be viewed as an $n$-dimensional Euclidean space equipped with metric
$g=g_{ij}dx^idx^j$. Here
$$g_{ij}=\Big\lan F_* \f{\p}{\p x^i},F_*\f{\p}{\p x^j}\Big\ran =\Big\lan \ep_i+\f{\p f^\a}{\p x^i}\ep_{n+\a},\ep_j+\f{\p f^\a}{\p x^j}\ep_{n+\a}\Big\ran=\de_{ij}+\f{\p f^\a}{\p x^i}\f{\p f^\a}{\p x^j}$$
with $\{\ep_i,\ep_{n+\a}\}$ being the canonical orthonormal basis of $\R^{n+m}$. Let $Df:=(\f{\p f^\a}{\p x^i})$
be an $(n\times m)$-matrix valued function on $\R^n$, then
$$(g_{ij})=I_n+Df(Df)^T.$$
Hence $(g_{ij})\geq I_n$. In conjunction with $\det(g_{ij})=\De_f^2\leq \be_0^2$, we can deduce that all the eigenvalues
of $(g_{ij})$ take values between $1$ and $\be_0^2$. Denote $d:M\times M\ra \R$
$$d\big(F(x),F(y)\big)=|x-y|,$$
then the diameter of $M$ is $+\infty$ and it has been shown in Remark
\ref{DVP} that $M$ satisfies a DVP-condition.

Let $P,Q\in \grs{n}{m}$ whose Pl\"ucker coordinates are
$\ep_1\w\cdots\w \ep_n$ and $\ep_{n+1}\w \ep_2\w\cdots\w\ep_n$,  respectively.
Obviously,  $P,Q$ are S-orthogonal to each other. Denote by $\psi$ the Pl\"ucker embedding,
then
$$\psi\circ \g=\big(\ep_1+\pd{f^\a}{x^1}\ep_{n+\a}\big)\w\cdots\w \big(\ep_n+\pd{f^\a}{x^n}\ep_{n+\a}\big)$$
and by a direct computation,
\begin{eqnarray*}
w(\g,P)&=&\f{\lan \psi\circ\g, P\ran}{\lan \psi\circ\g,\psi\circ \g\ran^{\f{1}{2}}\lan P,P\ran^{\f{1}{2}}}=\De_f^{-1},\\
w(\g,Q)&=&\f{\lan \psi\circ\g, Q\ran}{\lan \psi\circ\g,\psi\circ \g\ran^{\f{1}{2}}\lan Q,Q\ran^{\f{1}{2}}}=\pd{f^1}{x^1}\De_f^{-1}.
\end{eqnarray*}
Hence $\big(w(\g,P),w(\g,Q)\big)\in K\subset \overline{\Bbb{D}}\backslash \big(\overline{\Bbb{D}}_{1/3}\cup \{(a,0):a\leq 0\}\big)$
if and only if
$$\be_1^{-2}\leq w(\g,P)^2+w(\g,Q)^2=\Big(1+\big(\pd{f^1}{x^1}\big)^2\Big)\De_f^{-2}$$
for a constant $\be_1<3$. This is equivalent to (\ref{slope}). So the Bernstein type result follows from Theorem \ref{Ber2}.

\end{proof}
\bigskip

\begin{rem}
Obviously $\be_1\Big(1+\big(\f{\p f^1}{\p x^1}\big)^2\Big)^{\f{1}{2}}\geq \be_1$ on the right hand side of (\ref{slope}). Hence Corollary
\ref{Ber3}  improves Theorem 6.2 in \cite{j-x-y2}. We note that $\pd{f^1}{x^1}$ in (\ref{slope}) can be replaced by
$\pd{f^\a}{x^i}$ for arbitrary $1\leq \a\leq m$ and $1\leq i\leq n$.
\end{rem}

\bigskip\bigskip

\Section{Appendix}{Appendix}
Let $\hh{}$ denote the quaternions with the standard basis $1, i, j, k$ and $\cc{}=\ir{}+\ir{}i, \hh{}=\cc{}+\cc{}j.$  Let
$q=z_1-\bar z_2 j\in\hh{}$ with $z_1, z_2\in\cc{}$. Then $\bar q=\bar z_1+\bar z_2 j$ and
$$\aligned
q i \bar q&= (z_1-\bar z_2 j) i (\bar z_1+ \bar z_2 j)\\
&=(\bar z_2 k+z_1 i)(\bar z_1+ \bar z_2 j)\\
&=\bar z_2 k \bar z_1 + z_1 i \bar z_1 +\bar z_2 k \bar z_2 j +z_1 i \bar z_2 j\\
&=z_1\bar z_2 k + |z_1|^2 i-|z_2|^2i + z_1\bar z_2k\\
&=(|z_1|^2-|z_2|^2)i+2 z_1\bar z_2 k
\endaligned$$
which coincides with the usual Hopf map
\begin{eqnarray*}\e&:& \hh{}\to \text{Im}\, \hh{}\\
\e&=& ((|z_1|^2-|z_2|^2,\;  2 z_1\bar z_2): \ir{4}\to \ir{3}
\end{eqnarray*}
with $\e(S^3)\in S^2$. Let
$$\zeta(x)=s(r)\e\left(\f{x}{r}\right)=s(r)r^{-2}\e(x):=\tilde s(r)\e(x)$$ with $r=|x|$.
It was shown in (\cite{h-l} Theorem 3.2, p. 135) that
$$x\to (x, \zeta(x))$$ define a
coassociative $4-$ submanifold in $\ir{7}$ invariant under $S^3$, provided
$$s(4s^2-5 r^2)^2=C,\qquad C\in\ir{}.$$ Those are area-minimazing
smooth minimal submanifolds except in the
case $C=0.$ When $C=0$ then $s(r)=\f{\sqrt{5}}{2}r$ and then $\tilde s(r)=\f{\sqrt{5}}{2r}$, the function $\zeta(x):\ir{4}\to \ir{3}$ given by
$$\zeta=\f{\sqrt{5}}{2r}\e(x)$$
defines a cone over the entire $\ir{4}$. This was discovered by Lawson and Osserman \cite{l-o}. This LO-cone shows that
 Moser's theorem that entire minimal graphs of bounded slope are
 affine linear cannot be extended to the case of dimension  $4$ and codimension $3$.
In this appendix, we  compute some important geometric quantities of this remarkable example.

Put
$$x=(z_1, z_2)\in \cc{2}=\ir{4},\quad z_1=r_1e^{i\th_1},\quad z_2=r_2e^{i\th_2},\quad |x|^2=r^2=r_1^2+r_2^2.$$
The flat metric on $\ir{4}$ reads
$$ds^2=dr_1^2+r_1^2d\th_1^2+dr_2^2+r_2^2d\th_2^2.$$
The orthonormal basis on $T_x\ir{4}$ is given by $\{e_0, e_1, e_2, e_3\},$ where
$$e_0=\pr{}=\f{r_1}{r}\pd{}{r_1}+\f{r_2}{r}\pd{}{r_2},\quad e_1=\f{r_2}{r}\pd{}{r_1}-\f{r_1}{r}\pd{}{r_2}$$
$$e_2=\f{r_2}{r_1r}\pd{}{\th_1}-\f{r_1}{r_2r}\pd{}{\th_2},\quad e_3=\f{1}{r}\left(\pd{}{\th_1}+\pd{}{\th_2}\right).$$
Now, we have
$$\e(x)=\left(r_1^2-r_2^2,\;  2r_1r_2 e^{i(\th_1-\th_2)}\right):\ir{4}\to \ir{3}.$$

Since
$$\pd{}{r_1}\tilde s(r)=\tilde s'\f{r_1}{r}, \quad \pd{}{r_2}\tilde s(r)=\tilde s'\f{r_2}{r},$$
then $$\aligned
\zeta_*e_0&=\left(\f{r_1}{r}\pd{}{r_1}+\f{r_2}{r}\pd{}{r_2}\right)(\tilde s\e)=\left(\tilde s'+\f{2\tilde s}{r}\right)\e,\\
\zeta_*e_1&=\f{r_2}{r}\pd{}{r_1}(\tilde s\e)-\f{r_1}{r}\pd{}{r_2}(\tilde s\e)=\f{\tilde s}{r}\left(4r_1r_2,\; 2(r_2^2-r_1^2)e^{i(\th_1-\th_2)}\right).
\endaligned$$
Since
$$\aligned
\zeta_*\pd{}{\th_1}&=\pd{}{\th_1}(\tilde s\e)=\tilde s\left(0,\; 2ir_1r_2e^{i(\th_1-\th_2)}\right),\\
\zeta_*\pd{}{\th_2}&=\pd{}{\th_2}(\tilde s\e)=\tilde s\left(0,\; - 2ir_1r_2e^{i(\th_1-\th_2)}\right),
\endaligned$$
then $$\aligned
\zeta_*e_2&=\f{r_2}{r_1r}\zeta_*\pd{}{\th_1}-\f{r_1}{r_2r}\zeta_*\pd{}{\th_2}=\tilde sr\left(0,\; 2i e^{i(\th_1-\th_2)}\right)\\
\zeta_*e_3&=0.
\endaligned$$

Put
$$
\aligned
\rho_0^2&=\f{1}{1+|\zeta_*e_0|^2}=\f{1}{1+(r^2\tilde s'+2r\tilde s)^2},\quad \rho_1^2=\f{1}{1+|\zeta_*e_1|^2}=\f{1}{1+4r^2\tilde s^2}\\
\rho_2^2&=\f{1}{1+|\zeta_*e_2|^2}=\f{1}{1+4r^2\tilde s^2},\quad \rho_3^2=\f{1}{1+|\zeta_*e_3|^2)}=1
\endaligned$$

Then the  Gauss map $\g$ for the coassociate $4-$submanifold is expressed by
$$(e_0, \zeta_*e_0)\w (e_1, \zeta_*e_1)\w (e_2, \zeta_*e_2)\w (e_3, \zeta_*e_3)$$
and the correspoinding $W-$matrix relative to $e_0\w e_1\w e_2\w e_3$ is

$$W=\left(\begin{array}{cccc}
            \rho_0 & 0  & 0 &0\\
                 0   & \rho_1 & 0&0 \\
                  0  & 0       & \rho_2 &0\\
                   0 & 0 &0  & 1
            \end{array}\right).$$
The nonzero Jordan angles are
$$\th_0=\arccos\rho_0,\; \th_1=\arccos\rho_1,  \; \th_2=\arccos\rho_2.$$

In particular, the Jordan angles of the image under the Gauss map for
the \textit{LO-cone}  can be obtained by substituting
$\tilde s=\f{\sqrt{5}}{2}r^{-1}$ in the above
expressions. Those are the following constants
$$\th_0=\arccos \f{2}{3},\quad  \th_1=\th_2=\arccos\f{\sqrt{6}}{6}.$$
The $w-$function is identically $\f{1}{9}$ and hence $v-$function,
denoted also by $\De_f$ for the graphic case, equals $9$. This fact
was originally verified by a computer program by Lawson-Osserman in \cite{l-o}. The LO-cone defined by $(f^1, f^2, f^3)$ on $\ir{4}$,
where
$$\aligned f^1&=\f{\sqrt{5}}{2}\f{(x^1)^2+(x^2)^2-(x^3)^2-(x^4)^2}{\sqrt{(x^1)^2+(x^2)^2+(x^3)^2+(x^4)^2}},\\
           f^2&=\f{\sqrt{5}}{2}\f{2(x^1x^3+x^2x^4)}{\sqrt{(x^1)^2+(x^2)^2+(x^3)^2+(x^4)^2}},\\
           f^3&=\f{\sqrt{5}}{2}\f{2(x^1x^3-x^2x^4)}{\sqrt{(x^1)^2+(x^2)^2+(x^3)^2+(x^4)^2}}.
           \endaligned$$
At $x^3\neq 0$ and $x^1=x^2=x^3=0,$ we have $\pd{f^2}{x^1}=\sqrt{5}$ and $(1+(\pd{f^2}{x^1})^2)^{\f{1}{2}}=\sqrt{6}.$

\bigskip\bigskip

\bibliographystyle{amsplain}

\end{document}